\documentclass[11pt]{article}
\usepackage{amsfonts}
\usepackage{amssymb}
\usepackage{latexsym}
\usepackage{multicol}
\usepackage{amsmath}
\usepackage{amsthm}
\usepackage[latin1]{inputenc}
\usepackage[english]{babel}
\usepackage{psfrag}
\usepackage{epsfig}
\usepackage{subfigure}
\usepackage{color}
\usepackage{hyperref}
\usepackage{float}
\oddsidemargin 0in
\evensidemargin 0in
\textwidth 16cm
\topmargin -2cm
\textheight 24cm
\parindent 0em
\parskip 2ex

{\theoremstyle{remark}
  \newtheorem{remark}{ Remark }[section]
   } 
\let\oldremark\remark
\let\oldendremark\endremark
\renewenvironment{remark}
  {\vspace{0.05cm} \small \oldremark \vspace{0.05cm}}
  {\oldendremark}

\newtheorem{theorem}{Theorem}[section]
\newtheorem{conjecture}{Conjecture}[section]
\newtheorem{proposicio}{Proposition}[section]
\newtheorem{lema}{Lemma}[section]

\DeclareRobustCommand{\fdd}{
\ifmmode
\else \leavevmode \unskip \penalty999 \hbox{} \nobreak \hfill
\fi
\quad\hbox{\qedsymbol}}

                {\qed \end{trivlist}}

\newcommand{\mnu}{{\hat{\nu}}}
\newcommand{\nudelta}{{\nu}}

\newcommand{\cO}{\mathcal{O}}

\newcommand{\cP}{\mathcal{P}}
\newcommand{\nR}{\mathbb{R}}

\newcommand{\ii}{{\mbox{\rm i}\,}}

\newcommand{\DF}[1]{\Delta F_{#1}}       %splitting function
\newcommand{\DFt}[2][1]{\Delta F_{#2}^{\{#1\}}} %1st order splitting function (Poincare-Melnikov)
\newcommand{\DFn}[1]{\widetilde{\Delta F_{#1}}} %splitting function numerica
\newcommand{\Fn}[2]{\widetilde{F}_{#1}^{#2}} %splitting function numerica

\newcommand{\bx}{{\bf x}}
\newcommand{\by}{{\bf y}}

\DeclareMathOperator{\sech}{sech}

\begin{document}

\title{Splitting of the separatrices after a Hamiltonian-Hopf bifurcation under periodic forcing}

\author{E. Fontich, C. Sim\'o, A. Vieiro\\ Departament de Matem\`atiques i Inform\`atica
\\ Universitat de Barcelona \\ BGSMath \\ Gran Via, 585, 08007, Barcelona, Spain}

\maketitle

%\tableofcontents 

\begin{abstract}
\noindent We consider the effect of a non-autonomous periodic perturbation on a
2-dof autonomous system obtained as a truncation of the Hamiltonian-Hopf normal
form. Our analysis focuses on the behaviour of the splitting of the invariant
2-dimensional stable/unstable manifolds. We analyse the different changes of
dominant harmonic in the splitting functions. We describe how the dominant
harmonics depend on the quotients of the continuous fraction expansion of the
periodic forcing frequency. We have considered different frequencies
including quadratic irrationals, frequencies having continuous
fraction expansion with bounded quotients and frequencies with unbounded quotients. The
methodology used is general enough to systematically deal with all these
frequency types. All together allow us to get a detailed description of the
asymptotic splitting behaviour for the concrete perturbation considered.
\end{abstract}

\section{Introduction}

In this work we consider the $(2+\frac{1}{2})$-dof Hamiltonian system
\begin{equation} \label{system_intro}
H(x_1,x_2,y_1,y_2,t) = H_0(x_1,x_2,y_1,y_2) + \epsilon H_1(x_1,x_2,y_1,y_2,t),
\end{equation}
where 
$$
H_0(x_1,x_2,y_1,y_2)= x_1 y_2 - x_2 y_1 + \nu \left(\frac{x_1^2+x_2^2}{2} + \frac{y_1^2+y_2^2}{2} \left( -1 + \frac{y_1^2+y_2^2}{2} \right) \right),
$$
and
$$
H_1(x_1,x_2,y_1,y_2,t)=\frac{y_1^5}{(d-y_1)(c-\cos(\theta))}, \quad \theta=\gamma t + \theta_0.
$$
We shall fix concrete values of $c$, $d$, $\gamma$ and $\epsilon$, and consider
$\nu>0$ as a perturbative parameter. The parameter $\theta_0 \in [0,2\pi)$ is an
initial time phase.

Our motivation to consider that concrete system is to study some dynamical
properties related to the Hamiltonian-Hopf bifurcation under a periodic
forcing. Then, to start with, in Section~\ref{Sec:SokolskiiNF} we briefly
review the reduction to Sokolskii normal form (NF) for a 2-dof Hamiltonian
system that undergoes a Hamiltonian-Hopf bifurcation.  

The truncation of the Sokolskii NF provides an integrable approximation of the
dynamics. The above unperturbed system $H_0$ is simply the lowest order
truncation that captures the main dynamical features of the
Hamiltonian-Hopf bifurcation. Some basic facts concerning the dynamics of $H_0$
are summarized in Section~\ref{Sec:ConcreteSystem}. 

For $\nu>0$ the origin becomes of complex-saddle type. For $\epsilon=0$ the 2D
stable/unstable invariant manifolds coincide. But for small and fixed
$\epsilon>0$ the perturbation $H_1$ creates a splitting of these invariant
manifolds. In Section~\ref{Sec:ConcreteSystem} we also discuss some nice properties
of the chosen perturbation. Such splitting of the invariant manifolds becomes
exponentially small in $\nu$ as $\nu \rightarrow 0$. In Section~\ref{sect_num}
we perform a numerical computation of the splitting functions.  Quadruple
precision arithmetics is used to integrate (\ref{system_intro}) in order to get
a sample of points on $W^u(0)$ and $W^s(0)$ that allows us to compute the
splitting function in a fundamental domain. Different bifurcations are detected
examining the nodal lines of the splitting functions as $\nu$ varies.

The corresponding Poincar\'e-Melnikov function is analytically investigated in
Section~\ref{Sec:Splitting} by means of a combination of numerical, symbolical
and theoretical tools. The splitting problem considered is non-perturbative
($\epsilon$ is fixed) and singular (when $\nu=0$ the system is not hyperbolic)
and the  use of the Melnikov approximation to study the splitting is not
theoretically justified. In Section~\ref{Sec:ValMel} we compare the results of the splitting
obtained in Section~\ref{sect_num} with those from (a suitable truncation, adding only the
relevant terms) the Melnikov approximation derived in Section~\ref{Sec:Splitting},
getting a remarkable agreement.

In Section~\ref{sect_PoincMel} we further analyse the Poincar\'e-Melnikov
function by taking advantage of the concrete properties of the
system and of the perturbation to give explicit details of the asymptotic
behaviour of the splitting. In particular, we look for the concrete values of
the parameter $\nu$ for which a change in the dominant harmonic is detected and
we study how these values asymptotically behave.

As expected, the Diophantine properties of the frequency $\gamma$ of the perturbation $H_1$
play a key role in the analysis performed in Section~\ref{sect_PoincMel}. Note
that we do not assume that we have a concrete frequency, instead our hypothesis
are on the properties of the continuous fraction expansion (CFE) of $\gamma$.
Section~\ref{Sec:Otherfreq} is devoted to illustrate the behaviour of the splitting for
different frequencies $\gamma$. In particular, we show examples where
some of the best approximants of $\gamma$ never become a dominant harmonic in
the splitting functions.

Finally, Section~\ref{Sec:Conclusion} summarizes the results and describes
related future work problems.  

Five appendices complement the discussions through the text. In
Appendix~\ref{autonomous} we study the splitting under an autonomous
perturbation of the unperturbed system.  The simple asymptotic behaviour of the
splitting is well-understood in this situation in contrast with the
non-autonomous perturbation case studied in this work.
Appendix~\ref{exempleduffing} however illustrates that in the autonomous case,
taking a non-entire perturbation, the analysis of the splitting by considering
individual terms of the series expansion of the perturbation can lead to a
larger dominant exponent of the Melnikov function.  This is not expected in the
non-autonomous case since the dominant term comes from the quasi-periodic
properties of the splitting asymptotic behaviour. Appendix~\ref{Sec:Regularity}
discusses about the role of the regularity of the non-autonomous perturbation
in $t$ in the asymptotic behaviour of the splitting.

When describing the asymptotic behaviour of the splitting of the invariant
manifolds for system (\ref{system_intro}) we will see that for large
intervals of $\nu$ the dominant harmonic coincides for both splitting
functions. However, there are small intervals of $\nu$ where the dominant
harmonics differ. In Appendix~\ref{split-difusion} we comment on the expected
consequences that this fact has in what concerns the (local) diffusive
properties of the system for very small values of $\nu$.

In the last appendix we focus on the presence of hidden harmonics, that is, harmonics
associated to best approximants of $\gamma$ that never become a dominant harmonic
of the splitting function. As said, hidden harmonics
are shown for some frequencies in Section~\ref{Sec:Otherfreq}. 
We prove in Appendix~\ref{conseq_best} that, under generic conditions, it is
not possible to have two consecutive best approximants of $\gamma$ which are not related to
a dominant harmonic of the splitting function when some nearby quotients of the
CFE of $\gamma$ are large enough. A more general situation can be found in \cite{FonSimVie-2}.

The theoretical derivations presented in this work provide a satisfactory and
complete description of the asymptotic behaviour of the splitting of
separatrices of the system (\ref{system_intro}). On the other hand, a complete
rigorous proof of the results included here will require 
\begin{enumerate}
\vspace{-0.2cm}
\item to bound the effect of higher order terms of the expansion of the
splitting function in powers of $\epsilon$ to guarantee that the first order Poincar\'e-Melnikov
function provides the dominant term of the splitting behaviour, and 
\vspace{-0.2cm}
\item to check that the contribution of the non-dominant harmonics of the
Poincar\'e-Melnikov approximation does not change the dominant term of the
asymptotic expansion of the splitting behaviour.
\end{enumerate}
Even if we do not address formally any of the previous items, the numerical
results that we present provide a strong numerical evidence supporting them.

\section{The theoretical framework: the Hamiltonian-Hopf bifurcation} \label{Sec:SokolskiiNF}

For the reader's convenience, in this section we briefly summarize some details of
the analysis of the Hamiltonian-Hopf bifurcation.

Consider a one-parameter family of Hamiltonian systems
$H_\mnu(x_1,x_2,y_1,y_2)$ which undergo a Hamiltonian-Hopf bifurcation.
Assume that for $\mnu>0$ the origin is elliptic and becomes
complex unstable for $\mnu<0$. This implies that the eigenvalues of the
linearised Hamiltonian system suffer a Krein collision: for $\mnu>0$ the linear
system has two pairs of purely imaginary eigenvalues $\pm \ii \omega_1$ and $\pm
\ii \omega_2$. These pairs meet in a double pair $\pm \ii \omega$, $\omega >0$, on
the imaginary axis for $\mnu=0$ (Krein collision) and they become a hyperbolic
quartet  $\pm \alpha \pm \ii \omega$, $\alpha,\omega>0$ for $\mnu<0$.

Let $\mathbb{P}_k$ be the set of homogeneous polynomials of degree $k \in
\mathbb{N}$.  Consider the Taylor expansion at $0$ of $H_{\mnu}$ expressed as
$$
H_{\mnu}=\sum_{k \geq 2} \sum_{j\geq 0} \mnu^j H_{k,j}  , \qquad \text{where } H_{k,j}
\in \mathbb{P}_k \text{ for all } k\geq 2, \ j\geq0.
$$
The first step is to reduce the quadratic part $H_{2,0}$ to a canonical NF
(i.e. a NF obtained via a symplectic change of coordinates). After doing this
reduction the strategy will be to use a Lie series methodology to successively
(order by order) simplify (as much as possible) the terms $H_{2,j}, \, j\geq
1$, and $H_{k,j}, \, k\geq3, j \geq0$.

The possible canonical forms for quadratic Hamiltonians were obtained in
\cite{Will37}. In the case of two pairs of (double) purely
imaginary eigenvalues $H_{2,0}$ can be reduced to the so-called Williamson NF
\begin{equation} \label{WNF}
H_{2,0}= -\omega (x_2 y_1 - x_1 y_2) + \frac{1}{2} (x_1^2 + x_2^2).
\end{equation}

The next step involves normalising higher order terms of $H_{\mnu}$.  The fact
that the linearization at the Hamiltonian-Hopf bifurcation point is non-semisimple
makes the NF reduction a little bit more involved, see
\cite{MeySch71,VdM82,MeyerHall,Han07,PalYan00}. A standard procedure to deal
with the terms of order $(k,j)$ is to look for a change of variables given by
the time-$1$ map of a Hamiltonian $G \in \mathbb{P}_k$.  In such a case the
corresponding change transforms $H_{\mnu}$ into
\begin{equation} \label{2bis}
\tilde{H}_{\mnu} = \sum_{i \geq 0} \frac{1}{i!} \ \text{ad}^i_{H_{\mnu}}(G),
\end{equation}
where $\text{ad}_{F}(G)=\{F,G\}$ denotes the usual adjoint operator defined in
terms of the Poisson bracket
$$
\{F,G\} = \left( \frac{\partial{F}}{\partial x_1} \frac{\partial{G}}{\partial y_1} - \frac{\partial{F}}{\partial y_1} \frac{\partial{G}}{\partial x_1} \right) + \left( \frac{\partial{F}}{\partial x_2} \frac{\partial{G}}{\partial y_2} -  \frac{\partial{F}}{\partial y_2} \frac{\partial{G}}{\partial x_2} \right). 
$$
Collecting the terms of $\tilde{H}_{\mnu}$ of order $(k,j)$ in (\ref{2bis}) we get $H_{k,j} +
\text{ad}_{H_2}(G)$, meaning that the change of coordinates allows us to remove
the terms $H_{k,j}$ of $H_{\mnu}$ that belong to $\text{Im}
\,\text{ad}_{H_2}(G)$.  The Fredholm alternative implies that $\mathbb{P}_k=
\text{Im}\, \text{ad}_{H_2} \oplus \text{Ker} \, \text{ad}^\top_{H_2}$, where
$\text{ad}^\top_{H_2}$ denotes the transpose operator.  Then, as indicated in
\cite{Elpetal87,MeyerHall}, a systematic way to proceed is to look, at each
order $(k,j)$ of the normalisation procedure, for $G \in \mathbb{P}_k$ such that
\begin{equation} \label{homological}
H_{k,j} + \text{ad}_{H_2}(G) \in \text{Ker} \, \text{ad}^\top_{H_2}.
\end{equation}

Moreover, in the (symplectic, $\Omega=dx_1 \wedge dy_1 + dx_2 \wedge
dy_2 = dR \wedge dr + d \Theta \wedge d \theta$) new coordinates
\begin{equation}\label{Sokcoord}
y_1=r \cos(\theta), \qquad y_2=r \sin(\theta), \qquad R= (x_1 y_1 + x_2 y_2)/r, \qquad \Theta =x_2 y_1 - x_1 y_2,
\end{equation}
the transpose linear system (i.e. the system with equations defined by the
matrix $J (D^2 H_2)^\top$) reduces to $H_2^\top = -\omega \Theta + \frac{1}{2}
r^2$, see \cite{MeyerHall}. Then (\ref{homological}) implies that the
normalised (formal) Hamiltonian is given by
\begin{equation} \label{SNF}
\text{NF}(H_{\mnu})=\omega \Gamma_1 + \Gamma_2 + \sum_{\substack{k,l,j \geq 0 \\ k + l +j \geq 2}} a_{k,l,j} \, \Gamma_1^{k} \, \Gamma_3^{l} \, \mnu^j,
\end{equation}
where
$$
\Gamma_1 = x_1 y_2 - x_2 y_1, \qquad \Gamma_2 = (x_1^2+ x_2^2)/2 \qquad \text{and} \qquad \Gamma_3= (y_1^2+ y_2^2)/2.
$$
This is the so-called Sokolskii NF \cite{Sok74}, see \cite{MeyerHall,GaiGel11}
for further details on its derivation. Note that:

\begin{itemize}
\item  We have seen that there exists a formal change of variables $C$ (not
convergent in general) such that reduces the given system to the NF
(\ref{SNF}). Moreover, $C$ is symplectic, see for example \cite{SimVall01}. If
the quadratic part of the original system is already in Williamson NF, then the
change is near-the-identity.

\item The reduced Hamiltonian (\ref{SNF}) is formally integrable and possesses
$\Gamma_1$ as an extra (formal) integral of motion. The original Hamiltonian is only
formally integrable (that is, the truncation at any order is integrable) and
the difference between the Hamiltonian $H_{\mnu}$ and the
formal series $\text{NF}(H_{\mnu}) \circ C^{-1}$ is beyond all orders.

\item The reduction to a NF is achieved by means of successive changes of
coordinates to normalize order by order the full Hamiltonian. Each of the
changes of the normalization procedure reduces the domain where the
truncated NF gives a good approximation. For a fixed perturbation parameter $\mnu$, there is
an optimal truncation order of the NF that minimizes the
bound of the error between the Hamiltonian and the NF in a suitable domain
around the fixed point. Note that the optimal order depends discontinuously on $\mnu$
because it jumps on the integers. See, e.g., \cite{Nei84,Sim94}
\end{itemize}

Next we discuss some features of the invariant manifolds of the origin for
$\text{NF}(H_{\mnu})$.  In particular, $\{\Gamma_1,\Gamma_2\}=0$ and
$\{\Gamma_1,\Gamma_3\}=0$ and hence, as we have said, $\Gamma_1$ is a first
integral of $\text{NF}(H_{\mnu})$. Therefore $\Gamma_1=0$ on the invariant
manifolds of the origin. On the other hand, these manifolds lie on
$\text{NF}(H_{\mnu})=0$. From (\ref{SNF}), making explicit the lowest order
terms of $\text{NF}(H_{\mnu})$, we have
\begin{eqnarray} \label{NF4}
\text{NF}(H_{\mnu}) &=&
          \omega  \Gamma_1 +  \Gamma_2 + \mnu ( a_{1,0,1} \Gamma_1 + a_{0,1,1} \Gamma_3 )+
         a_{2,0,0} \Gamma_1^2 + a_{1,1,0} \Gamma_1 \Gamma_3 + a_{0,2,0} \Gamma_3^2  \\
\nonumber     & & + \mathcal{O}(\mnu^2 (\Gamma_1 + \Gamma_3), \mnu (\Gamma_1+\Gamma_3)^2,(\Gamma_1+\Gamma_3 )^3).
\end{eqnarray}
Then, the 2D stable and unstable invariant manifolds $W^{s/u}({\bf 0})$ are given
by the relation 
\begin{equation} \label{WUS}
\Gamma_2 + \mnu \, a_{0,1,1} \Gamma_3 + a_{0,2,0} \Gamma_3^2 +
\mathcal{O}(\mnu^2 \Gamma_3, \mnu \Gamma_3^2, \Gamma_3^3)=0. 
\end{equation}
We want to have real invariant manifolds $W^{s/u}({\bf 0})$, which requires
$\Gamma_2, \Gamma_3 >0$ (otherwise they lie in the complex domain). This means that $\mnu
a_{0,1,1}<0$ and, since we have assumed that for $\mnu<0$ the origin is a
complex-unstable fixed point, we must have $a_{0,1,1}>0$. 
Moreover, in such a case, for $a_{0,2,0}>0$ the  invariant
manifolds $W^{u/s}(0)$ live in a finite domain which,
requiring the same order for the three dominant terms in (\ref{WUS}), has
size $\Gamma_2 =\mathcal{O}(\mnu^2)$ and $\Gamma_3= \mathcal{O}(\mnu)$. However for
$a_{0,2,0}<0$ the invariant manifolds may be unbounded. For the first case we
introduce the new parameter $\nudelta$ by $\mnu=-\nudelta^2$, and the rescaling
$x_i = \nudelta^2 \tilde{x}_i$, $\omega y_i= \nudelta \, \tilde{y}_i$, $i=1,2$,
$\omega t =\tilde{t}$, see \cite{MeySch71}. For concreteness, we shall consider
$\nu>0$.  After this non-canonical change of variables the system is again
Hamiltonian and the corresponding Hamiltonian is
\begin{equation} \label{scaledNF}
\text{NF}(\tilde{H}_{\nudelta}) = \tilde{\Gamma}_1 + \nudelta \left( \tilde{\Gamma}_2 + 
                  a \tilde{\Gamma}_3 + \eta \tilde{\Gamma}_3^2 \right) + 
                  \mathcal{O}(\nudelta^2),
\end{equation}
where
\begin{equation} \label{aeta}
a=-a_{0,1,1}/\omega^2 \qquad \text{ and } \qquad \eta=a_{0,2,0}/\omega^4.
\end{equation}
Hence, as it was pointed out in \cite{McSMey03}, for $\eta>0$ the invariant
manifolds $W^{u/s}(0)$ are bounded while for $\eta <0$ they may be unbounded.
Henceforth, we assume $a<0$ and $\eta>0$.

\vspace{0.2cm}
\begin{remark} \label{remark_eigen}
\small
From (\ref{NF4}) one checks that the eigenvalues of the linearisation at the
origin of the original system $H_{\mnu}(x_1,x_2,y_1,y_2)$  are given by
$\lambda=\pm \ii \omega \pm \sqrt{a_{0,1,1}} (-\mnu)^{1/2} + \mathcal{O}(\mnu)$.
Then, for $\mnu<0$, one has  $\text{Re} (\lambda) = \pm \omega \sqrt{-a}
(-\mnu)^{1/2} + \mathcal{O}(\mnu)$ and $\text{Im} (\lambda) = \pm \omega +
\mathcal{O}(\mnu)$.
\end{remark}

\section{The system: a periodic perturbation of the truncated NF} \label{Sec:ConcreteSystem}

In this section we provide some details on the concrete system (\ref{system_intro}) studied in this paper.

\subsection{The unperturbed system} \label{sec3p1} 

Our starting point is the truncated (ignoring
$\mathcal{O}(\nudelta^2)$ terms) Sokolskii NF Hamiltonian (\ref{scaledNF}).
According to Section~\ref{Sec:SokolskiiNF}, for $a<0$
and $\eta>0$ the invariant manifolds of the origin are bounded.
The rescaling $\tilde{x}_i \to (-\sqrt{\eta}/a) \tilde{x}_i$, $\tilde{y}_i \to
(\sqrt{-\eta/a}) \tilde{y}_i$, $i=1,2$, $\nu \to \sqrt{-a} \nu$, reduces the
truncated Hamiltonian (\ref{scaledNF}) to the case $a=-1$ and $\eta=1$.  To
simplify notation, we denote the rescaled variables and parameter simply by
$(x_1,x_2,y_1,y_2)$ and $\nu$, respectively. We also introduce $\bx=(x_1,x_2)$,
$\by=(y_1,y_2)$, and we denote by $H_0$ the corresponding truncated
Hamiltonian. Hence, $H_0$ is just given by 
\begin{equation} \label{H0} H_0(\bx,\by)=
\Gamma_1 + \nu (\Gamma_2 - \Gamma_3 + \Gamma_3^2), 
\end{equation} 
where $\Gamma_1=x_1 y_2 -x_2 y_1$, $\Gamma_2 = (x_1^2+x_2^2)/2$ and $\Gamma_3 =
(y_1^2+y_2^2)/2$. The system $H_0$ is defined on the symplectic manifold
$(\mathbb{R}^4,\Omega)$ with $\Omega=dx_1 \wedge dy_1 + dx_2 \wedge dy_2$, and
the equations of motion are
$$
\begin{array}{rcr} 
\dot{x}_1 & \! = \! & -x_2 + \nu y_1 (y_1^2 + y_2^2 -1), \\
\dot{x}_2 & \! = \! &  x_1 + \nu y_2 (y_1^2 + y_2^2 -1), 
\end{array}
\qquad
\begin{array}{rcr} 
\dot{y}_1 & \! = \! & -y_2 - \nu x_1, \\
\dot{y}_2 & \! = \! &  y_1 - \nu x_2. 
\end{array}
$$

As follows from Section~\ref{Sec:SokolskiiNF}, $H_0$ is integrable and $\Gamma_1$ is an
independent first integral of the system. The origin is a fixed point of $(\ref{H0})$
with eigenvalues $\pm \nu \pm \ii$. For $\nu>0$, the
origin is of complex-saddle type and the invariant manifolds $W^{u/s}(\bf 0)$
are given by $\{H_0=0\}\cap \{\Gamma_1=0\}$. To elucidate the dynamics of
$H_0$ it is convenient to introduce (non-symplectic) polar coordinates
\begin{equation} \label{polar}
x_1=R_1 \cos(\psi_1), \quad x_2=R_1 \sin(\psi_1), \quad y_1=R_2 \cos(\psi_2), \quad y_2=R_2 \sin(\psi_2),
\end{equation}
where $R_1,\ R_2 >0$ and $\psi_1, \psi_2 \in [0,2 \pi)$. The equations of motion become
\begin{equation} \label{eqmotR1R2}
\begin{array}{rclrcl}
\dot{R}_1 &= & \nu R_2 (R_2^2 -1) \cos(\psi_2-\psi_1),   & \dot{\psi}_1  &=& 1 + \nu (R_2^2 -1) R_2 \sin(\psi_2-\psi_1) / R_1, \\
\dot{R}_2 &= &-\nu R_1 \cos(\psi_2-\psi_1),             & \dot{\psi}_2  &=& 1 + \nu  R_1 \sin(\psi_2-\psi_1) / R_2. \\
\end{array}
\end{equation}
One has  $\Gamma_1= R_1 R_2 \sin(\psi_2-\psi_1)$, and hence $\Gamma_1=0$ implies $\sin(\psi_2-\psi_1)=0$.
One can distinguish two cases: either $\psi_1-\psi_2 =0 \ (\text{mod} \, 2 \pi)$ or $\psi_1-\psi_2 = \pi \ (\text{mod} \, 2 \pi)$.
Each of these cases defines a system for $R_1,R_2>0$. 
But, since the changes $R_1 \to -R_1$ and $R_2 \rightarrow -R_2$ reduce the
system of one of the cases to the other one, it is enough to consider
one of the cases if one allows $R_1,R_2$ to be negative.\footnote{The virtual
singularities at $R_1=0$ and $R_2=0$ play no role. They are due to the use of
polar coordinates. Indeed, using the so-called Sokolskii coordinates
\cite{Sok74} one can remove one of them. We note, however, that there are not
globally defined polar coordinates around a 2-dof complex-saddle singularity, see \cite{LerUma92}.}
To fix ideas, we consider $\psi_1-\psi_2 = \pi \ (\text{mod} 2 \pi)$. Then, the
restriction of the dynamics on $\{\Gamma_1=0\}$ for the $(R_1,R_2)$-components is just given by the equations
related to the Duffing Hamiltonian $K= \nu (R_1^2 -R_2^2 + R_2^4 /2)/2$ (with
the symplectic 2-form $\Omega_K=dR_2 \wedge dR_1$). The local positive 
branch of the homoclinic orbit $\gamma(t)$ of $K$, with $R_1,R_2 >0$, corresponds
to the unstable manifold of the origin.  It follows from (\ref{eqmotR1R2})
that along the invariant manifolds $\psi_1= \psi_2 - \pi$ and $\psi_2=t + \psi_0$,
where $\psi_0 \in  [0,2\pi)$ is an arbitrary phase. Moreover, the invariant
manifolds of the unperturbed system (\ref{H0}) are foliated by homoclinic
orbits $\gamma_{\psi_0} (t)=(x_1(t),x_2(t),y_1(t),y_2(t))$ given by
\begin{equation} \label{homoH0}
x_1(t) \!=\! -R_1(t) \cos(\psi), \ \ x_2(t) \!=\! -R_1(t) \sin(\psi), \ \ y_1(t) \!=\! R_2(t) \cos(\psi), \ \ y_2(t) \!=\! R_2(t) \sin(\psi),
\end{equation}
being $\psi=t+\psi_0$, 
$R_1(t)= \sqrt{2} \sech(\nu t) \tanh(\nu t)$, and $R_2(t)=\sqrt{2}\sech(\nu t)$.
In particular, $\gamma_{\psi_0}(t)$ has singularities at $t= (2n+1) \ii \pi / 2 \nu$, $n \in \mathbb{Z}$.  

\subsection{The perturbation} 
We proceed by adding a periodic perturbation to
(\ref{H0}). Concretely, as stated in the Introduction, we consider 
\begin{equation} \label{perturbedsystem}
H(\bx,\by,t) = H_0({\bx},{\by}) + \epsilon H_1({\bx},{\by},t),
\end{equation}
where 
$H_0({\bx},{\by})$ is the unperturbed Hamiltonian (\ref{H0}), which depends on $\nu$, and 
$$
H_1({\bx},{\by},t)=g(y_1)f(\theta)=\frac{y_1^5}{d-y_1} \frac{1}{c-\cos(\theta)},
$$
where $\theta=\gamma t + \theta_0$, with $\gamma\in \mathbb{R} \setminus
\mathbb{Q}$ and $\theta_0 \in [0,2\pi)$ is an initial phase, $d>\sqrt{2}$ and $c>1$.
The parameter $\epsilon$ is considered to be small and fixed. We choose $\gamma
=(\sqrt{5}-1)/2$, $d=7$, $c=5$ and $\epsilon=10^{-3}$ for the majority of
computations through the paper, but we do not restrict to these values in the
theoretical considerations. In particular, we will give details on how to deal with
other irrational frequencies $\gamma$ and the role of their arithmetic properties 
in the asymptotic splitting behaviour as $\nu \to 0$.

\vspace{0.2cm}

The following comments motivate and somehow justify the perturbation (\ref{perturbedsystem}) considered.
\vspace{-0.2cm}
\begin{enumerate}
\item A generic autonomous perturbation would create a splitting of
separatrices. This case resembles the splitting of a $(1+1/2)$-dof Hamiltonian
system (by considering the reduction to the energy level where the separatrices lie). A
direct analysis of the Poincar\'e-Melnikov function in this case reveals an exponentially
small behaviour in the parameter $\nu$ of the splitting measured as a variation of $\Gamma_1$. We
summarize in Appendix~\ref{autonomous} the theoretical results and some concrete
numerical simulations of the behaviour of the splitting for an autonomous
perturbation. 

\item The phenomena becomes much richer under a non-autonomous perturbation since
different frequencies interact. Consider the particular case of the
perturbation (\ref{perturbedsystem}).  Around the invariant manifolds the
unperturbed system possesses the internal frequency $1$ in $t$, see
(\ref{homoH0}).  Then, we choose $\gamma \in \mathbb{R}\setminus \mathbb{Q}$ in
$H_1$ so that the effect of the perturbation resembles that of a quasi-periodic
forcing. Concretely, when one restricts the perturbation $\epsilon
H_1(\bx,\by,t)=\epsilon g(y_1)f(\theta)$ to the unperturbed invariant manifolds
$W^{u/s}({\bf 0})$ of $H_0(\bx,\by)$, since $y_1$ has a factor periodic in $t$ as
(\ref{homoH0}) shows, one gets a quasi-periodic function in $t$ with basic
frequencies $(1,\gamma)$. As will be shown, some of the linear combinations of
the basic frequencies are slower (hence they average in a worst way) and
describe the behaviour of the dominant terms of the splitting of the invariant
manifolds.

\item $H_0$ is an entire function of $\bx,\by$. The perturbation $H_1$ in a
neighbourhood of the unperturbed invariant manifolds is real analytic with
respect to $\bx,\by$ (because, in particular, $y_1 \lesssim \sqrt{2}$ and we
choose $d=7$) and it is analytic in $t$. This implies that the amplitude of
the dominant term of the Poincar\'e-Melnikov approximation of the splitting
function decreases exponentially in the parameter $\nu$, see details in Appendix~\ref{Sec:Regularity}. 

\item The Fourier coefficients of the even function $f(\theta)=
(c-\cos(\theta))^{-1} = \sum_{j\geq 0} c_j \cos(j \theta)$ are given by 
\begin{equation} \label{expcs}
c_0=1/\sqrt{c^2 -1}, \qquad c_j=2 c_{0}/(c+\sqrt{c^2-1})^j \quad \text{ for }  j\geq 1.
\end{equation}
In particular, the Fourier coefficients decay as $1/(c+\sqrt{c^2-1})^j$ (this
is related to the fact that $f(\theta)$ has poles at $\pm \ii \log(c+\sqrt{c^2-1})$).

On the other hand, the Taylor series of $g(y_1)$ is given by
\begin{equation} \label{tayg}
g(y_1)=\frac{y_1^5}{d-y_1}= \sum_{k \geq 0} d^{-k-1} y_1^{5+k}.
\end{equation}
It contains all powers $y_1^k$ for $k\geq 5$, but does not contain terms in the
other three variables.

The choice of $c=5$ and $d=7$ guarantees a fast enough decay but still allows
us to differentiate the role of the different harmonics in the
Poincar\'e-Melnikov function. See also Remark~\ref{remark_teogen} below.

\item Finally, there is also a practical reason: $H_1$ is simple enough so that
quadruple precision numerical integration of the full system can be carried out
in a reasonable CPU time.   
\end{enumerate}

\begin{remark} 
Note that the perturbation $H_1(\bx,\by,t)$ preserves the fixed point at the
origin. Instead one could consider perturbations such that the origin becomes a
periodic orbit. We do not deal with this situation in this paper, but note that
the description given here also applies to this case.  
\end{remark}

\subsection{The splitting function} \label{Sec:splfun}

In the following sections we study the invariant manifolds $W^{u/s}(\bf 0)$ of
the system (\ref{system_intro}) and the asymptotic behaviour of their
splitting as $\nu \rightarrow 0$. Here we introduce the notation we shall use
to refer to the splitting function and its different approximations. 

We write $H_0=G_1+ \nu G_2$, where $G_1=\Gamma_1$ and
$G_2=\Gamma_2 - \Gamma_3 + \Gamma_3^2$. $G_1$ and $G_2$ are
first integrals of $H_0$. They are independent first integrals except for points on the surface 
$x_1=\pm y_2\sqrt{y_1^2+y_2^2-1},\;x_2=\mp y_1 \sqrt{y_1^2+y_2^2-1}$ 
(which includes, in
particular, the origin and the periodic orbit $x_1=x_2=0,\ y_1^2+y_2^2=1$).
The unperturbed invariant manifolds are given by $G_1=G_2=0$.

Given $\epsilon \geq 0$, for $i=1,2$, we denote by $F_i^u$ (resp. $F_i^s$) the restriction of
$G_i$ to the invariant manifolds $W^{u}({\bf 0})$ (resp. $W^{s}({\bf 0})$).
For $\epsilon$ small, the invariant manifolds $W^{u/s}({\bf 0})$ can be represented as graphs
$g_{u/s}:\mathbb{R}^2 \to \mathbb{R}^4$, $g_{u/s}(\psi_0,\theta_0)
= (\psi_0, \theta_0, F_1^{u/s}(\psi_0,\theta_0), F_2^{u/s}(\psi_0,\theta_0))$.
Each component of the graph $g_{u/s}$ defines a 2-dimensional surface in $\mathbb{R}^3$,
they are referred below by $F_1^{u/s}$-graph and $F_2^{u/s}$-graph of  $W^{u}({\bf 0})$, 

The splitting function $(\DF{1}, \DF{2})$ is defined by
\begin{equation} \label{splfun}
\DF{i}(\psi_0,\theta_0)=F_i^{u}(\psi_0,\theta_0) - F_i^s(\psi_0,\theta_0), \quad i=1,2.
\end{equation}

The splitting function (\ref{splfun}) can be expanded as
$$
\DF{i} = \DFt{i} +  \DFt[2]{i} + \dots,
$$
where  $\DFt[k]{i}(\psi_0,\theta_0) = \epsilon^k M_k(\psi_0,\theta_0)$,
$|M_k|=\mathcal{O}(1)$.  Hence, $(\DFt{1}, \DFt{2})$ is
the first-order Poincar\'e-Melnikov approximation (in powers of $\epsilon$) of the splitting function.

Below, we perform direct numerical computations of the invariant manifolds to
obtain approximations $\Fn{i}{u/s}$ of the components of the graph function,
$i=1,2$. From them we compute numerical approximations $\DFn{i}$ of
the components of the splitting function $\DF{i}$. 

Finally, the first-order approximation $(\DFt{1}, \DFt{2}))$ can be expanded in 
Fourier series in $(\psi_0,\beta_0)$.  Truncating them we obtain
approximations that can be evaluated symbolically.  In Section~\ref{Sec:ValMel}
we compare the results obtained symbolically from suitable truncations of
$(\DFt{1}, \DFt{2})$ with the numerical approximations $(\DFn{1},\DFn{2})$.

\begin{remark} \label{remark_teogen}
There are several theoretical works concerning the splitting of invariant
manifolds in presence of a quasi-periodic forcing, we refer to
\cite{DelGelJorSea97,SimVall01,DelGut05,DelGonGut14,DelGonGut14-2,DelGonGut14-3}.
A common hypothesis is that all the Fourier harmonics in $t$ and all the Taylor
series terms in ${\bf x},{\bf y}$ appear in the corresponding expansions. Then
they use generic analytic decay of the coefficients to bound the dominant
term of the Melnikov function. The perturbation considered in this work,
although does not have all the required terms, behaves similarly. In future
works we plan to investigate the effect of absence of harmonics and/or Taylor
terms in the perturbation and the consequences it has in the behaviour of the
splitting of the invariant manifolds and in the dynamics around them. In
particular, a higher order Melnikov analysis could be needed to describe the
splitting in such a situation. 
\end{remark}

\section{Numerical computations of the splitting: dominant harmonics and
nodal lines} \label{sect_num}

We present some numerical computations concerning the
invariant manifolds $W^{u/s}(\bf 0)$ and their splitting for small values of $\nu$. 
We compute $\DFn{1}$ for a mesh of points in a fundamental domain (see below) as the
difference of the value $\tilde{F}_1^u$ obtained for a point on $W^u(\bf 0)$ and the
``corresponding'' point on $W^s(\bf 0)$. We describe below how to assign the
corresponding point by using coordinates in a fundamental domain of the invariant
manifolds. Similarly, we also compute $\DFn{2}$. 

It is useful to consider the Poincar\'e section $\Sigma=\max
\{y_1^2 + y_2^2\}=\max R_2^2$, see (\ref{polar}).  Note that the invariant
manifolds $W^{u/s}(\bf 0)$ of the unperturbed system ($\epsilon=0$) intersect
$\Sigma$ in the curve $x_1=x_2=0$, $y_1^2 + y_2^2=2$. This is no longer true for
$\epsilon >0$ because of the changes of order $\epsilon$ due to the perturbation (see Fig.~\ref{wsf1f2}).
Moreover, there is a (exponentially small in $\nu$) splitting of the
invariant manifolds $W^{u/s}(\bf 0)$ for $\epsilon \neq 0$.

The illustrations in this section are for $\gamma=(\sqrt{5}-1)/2$ (golden frequency) and
for values of $\nu$ of the form $\nu_i=2^{-i}$, $i \geq 0$. For the
computation of the invariant manifolds and their splitting we proceed as
follows: 
\begin{enumerate}
\item We consider a fundamental domain of $W^{u}(\bf 0)$. This is given by a
2-dimensional torus $\mathcal{T}$.

\item The propagation of $\mathcal{T}$ up to $\Sigma$ gives a 2-dimensional
torus, say $\mathcal{T}_\Sigma$. The invariant manifolds $W^{u/s}(\bf 0)$ in
$\mathbb{R}^4$ are then given as the $\tilde{F}_1^{u/s}$ and the $\tilde{F}_2^{u/s}$-graphs over
$\mathcal{T}_\Sigma$. The initial ``angle'' and ``time'' phases $\psi_0$ and
$\theta_0$ are local coordinates in $\mathcal{T}_\Sigma$.

\item To get the $\Fn{1}{u/s}$ and $\Fn{2}{u/s}$-graphs over $\mathcal{T}_\Sigma$
we propagate a set $\{\tilde{\psi}_{0,k}, \tilde{\theta}_{0,j}\}$, $0\leq k,j \leq 512$,
of initial points in $\mathcal{T}$ (i.e. a total number of $2^{18}$ initial
conditions) until they reach the Poincar\'e section $\Sigma$. Concretely we select
the initial conditions as follows. We fix $R_2=10^{-12}$, set
$R_1=R_2(1-R_2^2/2)$ and define $y_1^u=R_2 \cos(\tilde{\psi}_0), \ y_2^u=R_2
\sin(\tilde{\psi}_0), \ x_1^u=R_1 \cos(\tilde{\psi}_0),  \ x_2^u=R_1 \sin(\tilde{\psi}_0)$. This gives
an initial condition on $W^u$. By symmetry, $y_1^s=y_1^u, \
y_2^s=y_2^u, \ x_1^s=-x_1^u, \ x_2^s= -x_2^u$ defines an initial
condition on $W^s$.

\item For the propagation step, the numerical integration is performed using an
ad-hoc implemented high-order Taylor time-stepper scheme with quadruple precision. 

\item To compute the difference (i.e. the splitting) between $W^{u}(\bf 0)$
and $W^{s}(\bf 0)$ we need to compare them at the same points of
$\mathcal{T}_\Sigma$. Hence, we select an equispaced mesh of angles $\psi_0$ and $\theta_0$
within $\mathcal{T}_\Sigma$, and refine the initial conditions in
$\mathcal{T}$ (we select the initial guess from the set of previously computed
points in $\Sigma$) using a Newton method.
\end{enumerate}

To give some illustrations we choose $\nu=2^{-4}$ and $\nu=2^{-6}$. For those
two values of $\nu$ the $\Fn{1}{s}$-graph (resp. $\Fn{2}{s}$-graph) of the stable manifold
$W^{s}(\bf 0)$ over $\mathcal{T}_\Sigma$ is shown in Fig.~\ref{wsf1f2} left (resp. right).

\begin{figure}[p]
\psfrag{F1}{$\Fn{1}{s}$}
\psfrag{F2}{$\Fn{2}{s}$}
\psfrag{psi0}{$\psi_0$}
\psfrag{beta}{$\theta_0$}
\begin{center}
\epsfig{file=./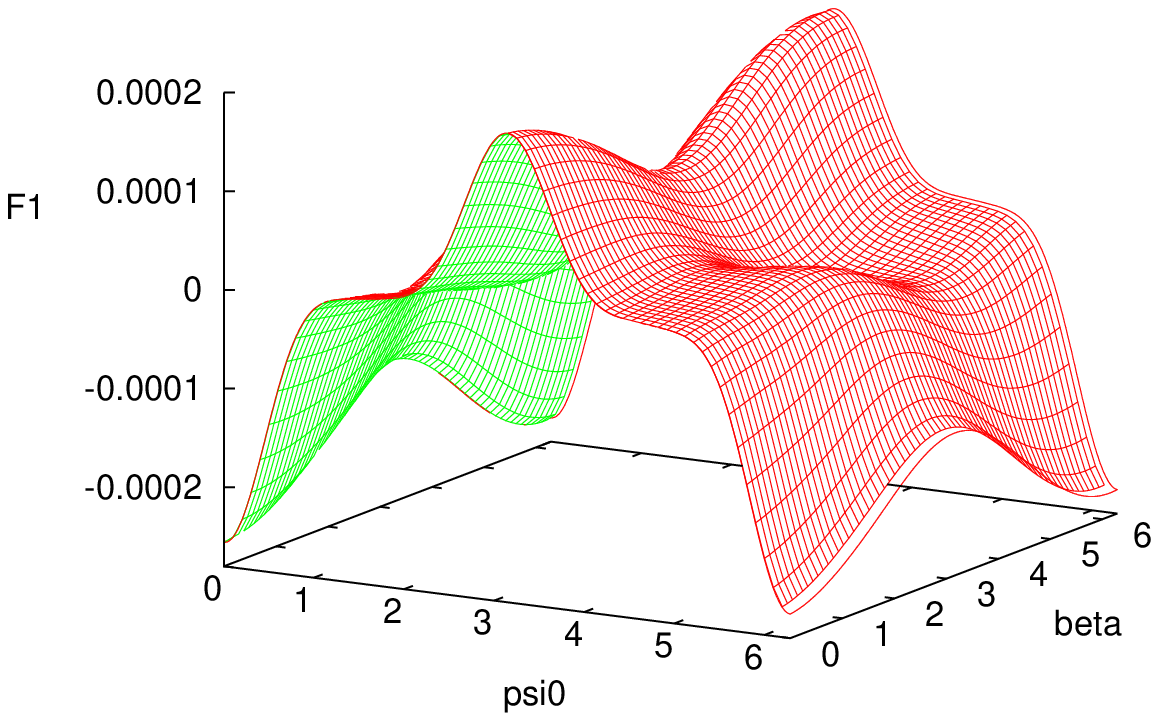,width=75mm,angle=0}
\epsfig{file=./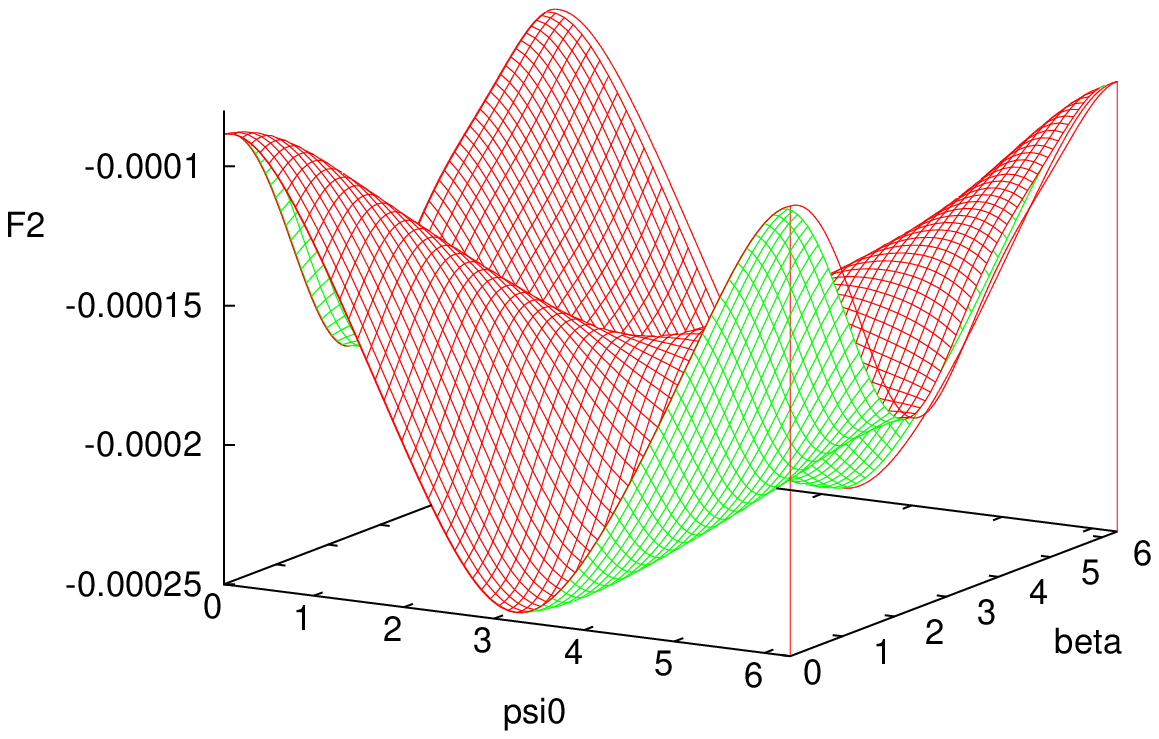,width=75mm,angle=0} \\[-11pt] 
\epsfig{file=./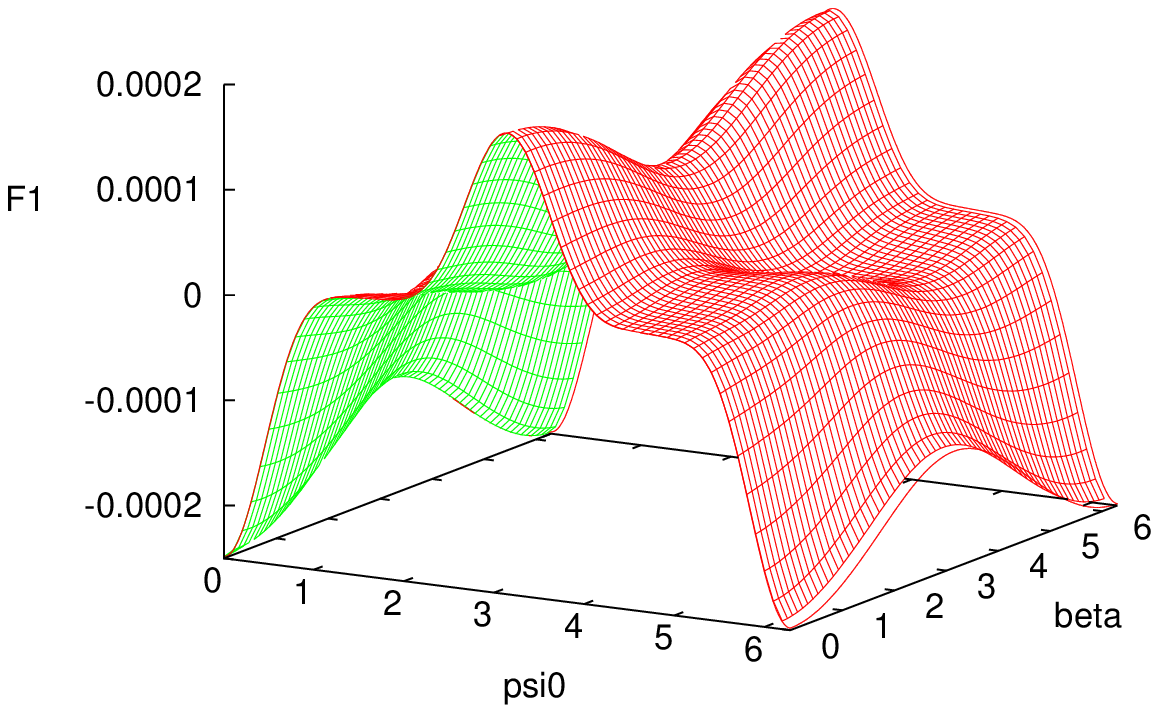,width=75mm,angle=0}
\epsfig{file=./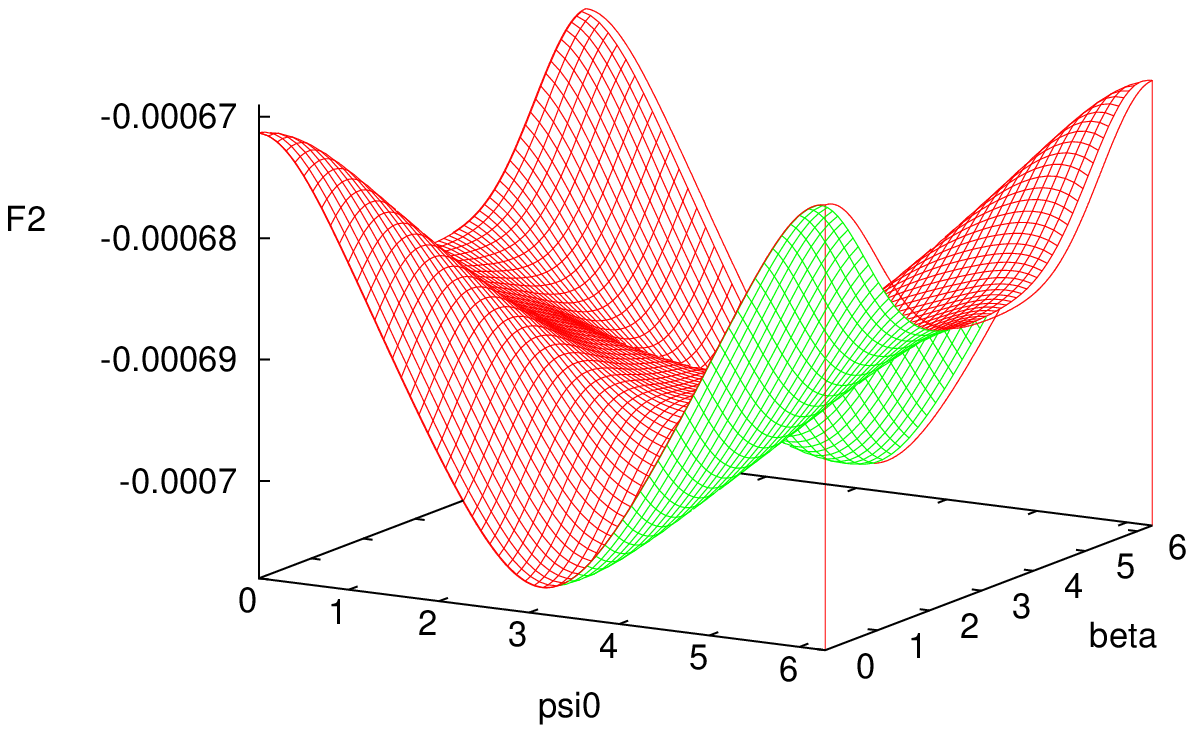,width=75mm,angle=0}
\caption{First row: for $\nu=2^{-4}$ we represent the $\Fn{1}{s}$-graph (left) and
the $\Fn{2}{s}$-graph (right) of $W^{s}(\bf 0)$ over $\mathcal{T}_\Sigma$. Second row: the same for $\nu=2^{-6}$. } 
\label{wsf1f2}
\end{center}
\psfrag{dF1}{$\DFn{1}$}
\psfrag{dF2}{$\DFn{2}$}
\psfrag{psi0}{$\psi_0$}
\psfrag{beta}{$\theta_0$}
\begin{center}
\epsfig{file=./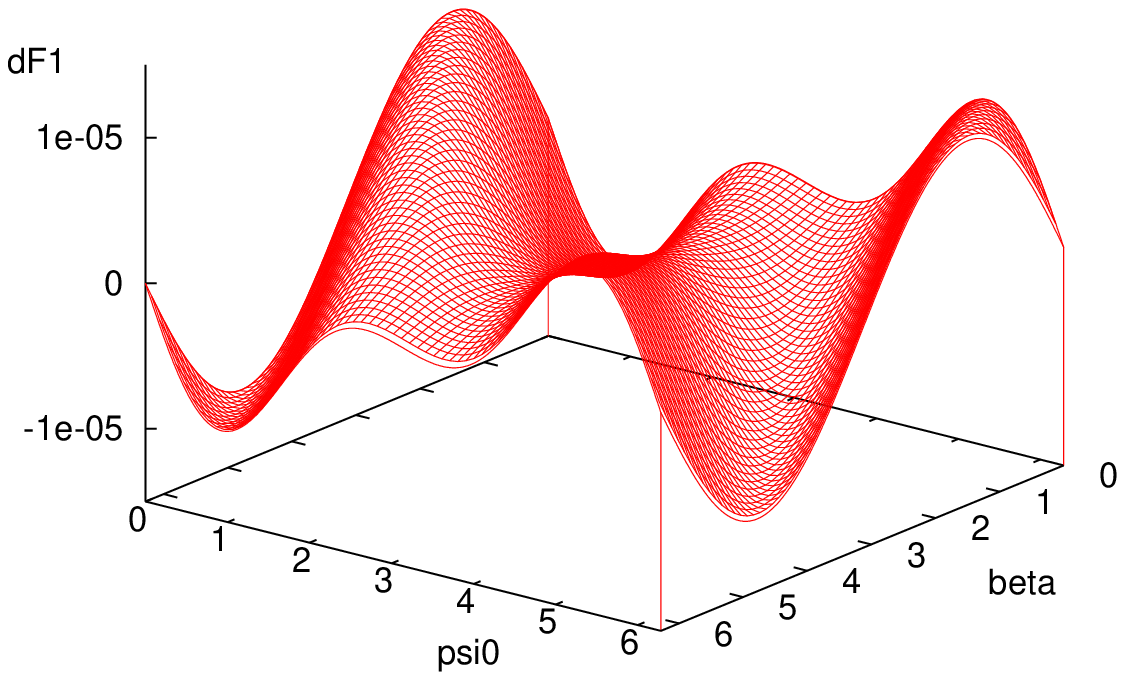,width=75mm,angle=0}
\epsfig{file=./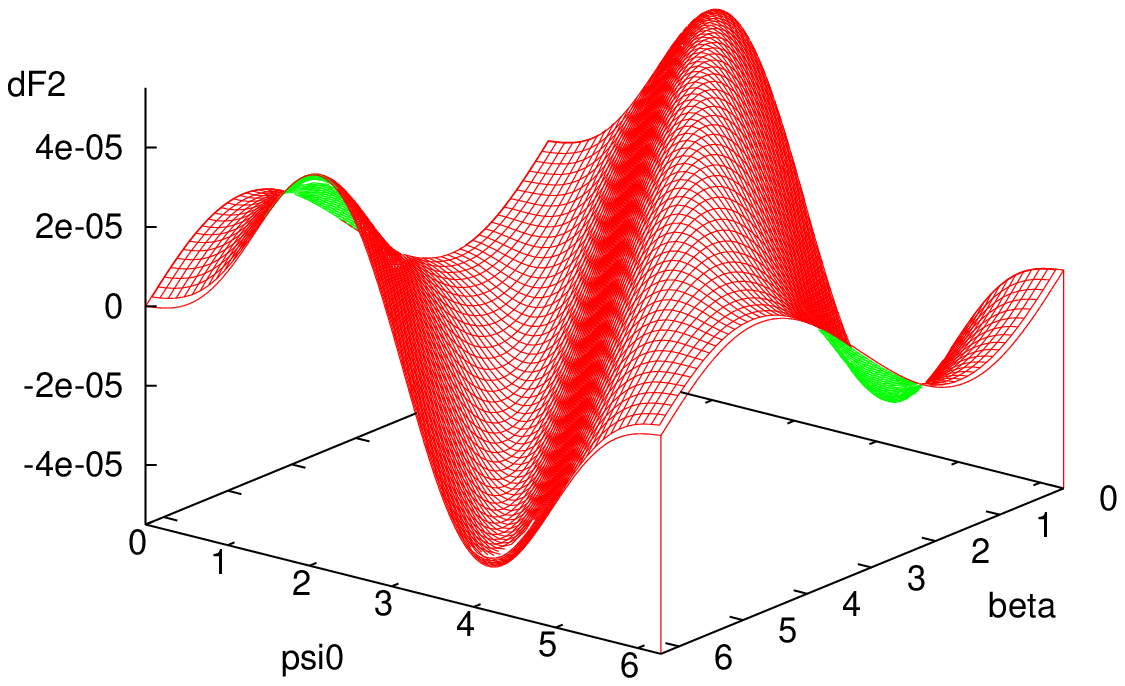,width=75mm,angle=0} \\[-11pt] 
\epsfig{file=./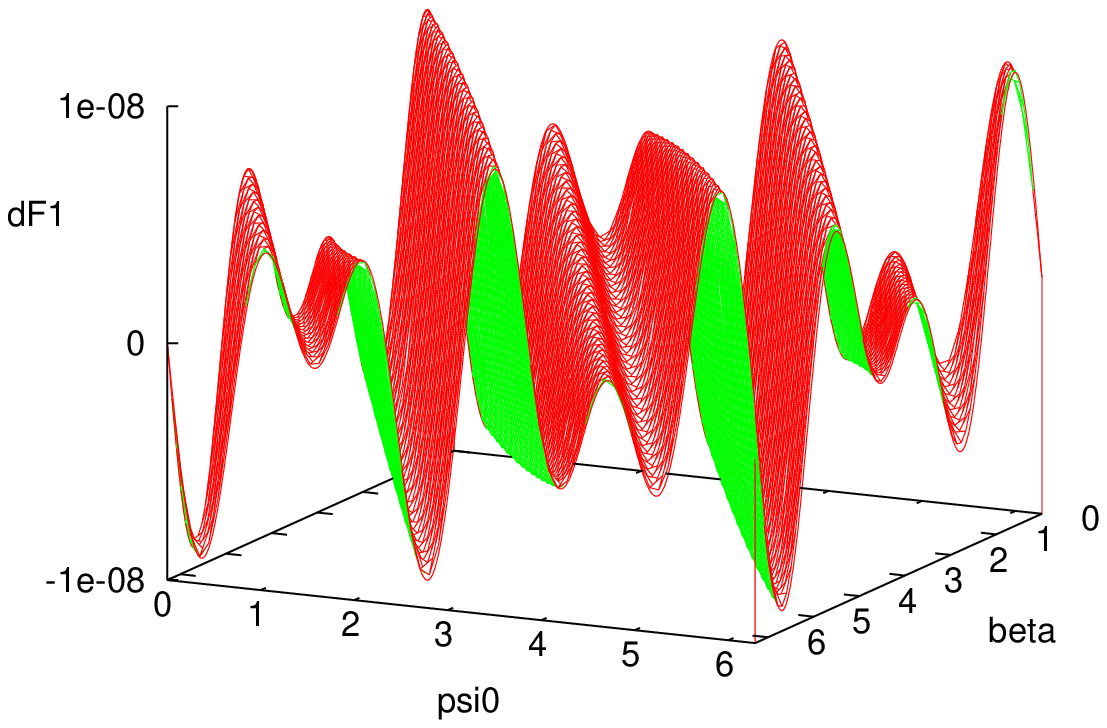,width=75mm,angle=0}
\epsfig{file=./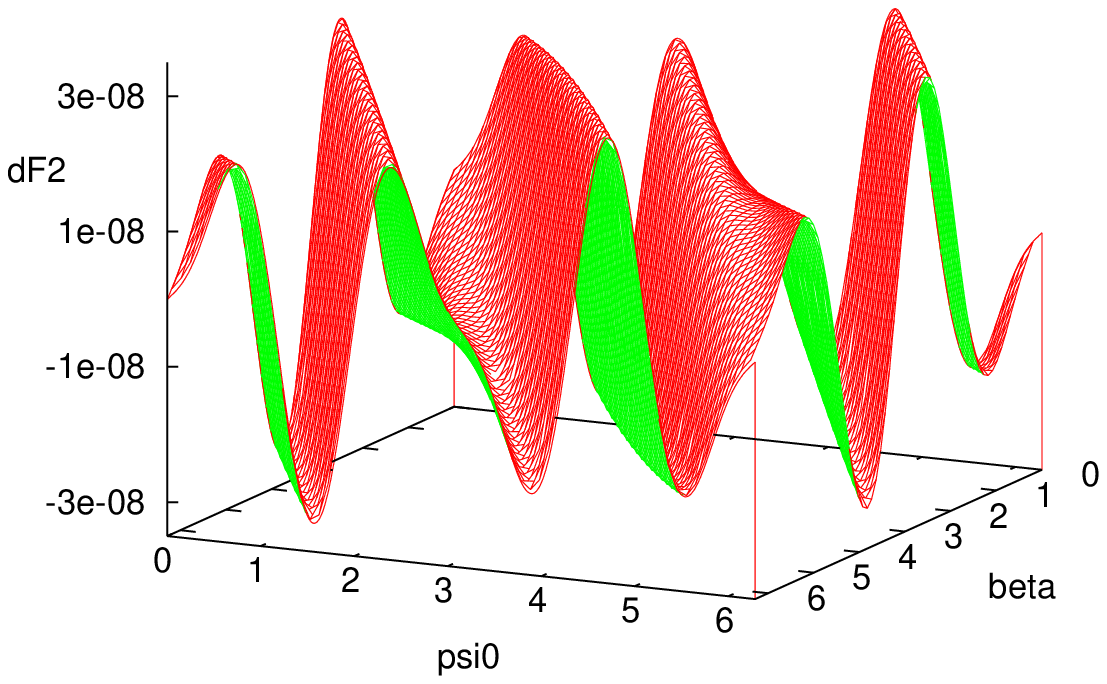,width=75mm,angle=0}
\caption{First row: For $\nu=2^{-4}$ we represent the $\DFn{1}$ (left)
and the $\DFn{2}$ (right).  Second row: $\DFn{1}$ and
$\DFn{2}$ for $\nu=2^{-6}$. } 
\label{spf1f2}
\end{center}
\end{figure}

There are no appreciable differences (using the scale of the plots) between
the graphs corresponding to $W^{s}(\bf 0)$ shown in Fig.~\ref{wsf1f2} and the corresponding
plots for the graphs of the unstable manifold $W^{u}(\bf 0)$. This is because
the splitting $(\DF{1},\DF{2})$ becomes exponentially small with respect to
$\nu$. We show in Fig.~\ref{spf1f2} the splitting $(\DFn{1},\DFn{2})$ for the
same values of $\nu$ as in Fig.~\ref{wsf1f2}. 

We note that while the graphs remain similar for those selected values of $\nu$
(although the vertical range changes for the $\tilde{F}_2$-graph representations), see
Fig.~\ref{wsf1f2}, the dominant harmonic of the Fourier expansion with respect
to $(\psi_0,\theta_0) \in \mathcal{T}_{\Sigma}$ of $\DFn{1}$ has changed from
$\nu=2^{-4}$ to $\nu=2^{-6}$, see Fig.~\ref{spf1f2}. A change of the dominant
harmonic of $\DFn{2}$ for these two values of $\nu$ is also observed.  Moreover,
for $\nu=2^{-6}$ the dominant harmonic of $\DFn{1}$ is different from the
dominant harmonic of $\DFn{2}$, as can be appreciated from the number of
oscillations of the left/right plots of the second row of Fig.~\ref{spf1f2}.
Concretely, for $\nu=2^{-4}$ the $(1,1)$ harmonic dominates for both $\DFn{1}$
and $\DFn{2}$, while for $\nu=2^{-6}$ the $(3,5)$-harmonic dominates for
$\DFn{1}$ and the $(2,3)$ harmonic dominates for $\DFn{2}$.

We can look for the so-called nodal lines. These are the zero level curves of
$\DF{1}$ or $\DF{2}$, i.e. where either the $F_1$-splitting
or the $F_2$-splitting vanishes. For $(\psi_0,\theta_0) \in \mathcal{T}_{\Sigma}$
the nodal lines for some values of $2^{-4.301} \leq \nu \leq 2^{-2.443}$ are shown in
Fig.~\ref{nodals1}. The nodal lines for some smaller values of $\nu$, up to
$2^{-6.235}$, are shown in Fig.~\ref{nodals2}. The values of $\nu$ shown have been selected
so that a change of $10^{-3}$ in $\log_2(\nu)$ produces a topological change of the nodal lines.
The intersections between the nodal lines correspond to homoclinic points and
the changes in the topology of the nodal lines correspond to 
passages from a dominant harmonic to another one (either in $\DF{1}$ or in $\DF{2}$),
 see \cite{SimVall01}. Hence, when decreasing $\nu$ many changes of dominant harmonic
have been detected.  We summarize them in
Table~\ref{taulanubif}. Concretely, we detect a topological change of the $\DFn{1}$ or
$\DFn{2}$ nodal lines for $\nu \in (\nu_1,\nu_2)$. The values of
$\nu_1$ and $\nu_2$ and the dominant harmonics at $\nu_1$ and $\nu_2$ are shown
in the table.

As expected the dominant harmonics of $\DFn{1}$ and $\DFn{2}$ are the elements of the
Fibonacci sequence, since they are related to the best approximants of the
golden number frequency $\gamma$. Observe that the appearance of a new harmonic
happens first for $\DFn{1}$ and later for $\DFn{2}$. These appearances take place
alternatively.  Later on we will estimate the changes in $\DF{1}$ and $\DF{2}$
carefully. The fact that the harmonics in $\DF{1}$ and $\DF{2}$ coincide for large
ranges of $\nu$ has some dynamical consequences in the diffusion properties
(see Appendix~\ref{split-difusion}).

\begin{table}
\begin{center}
\begin{tabular}{|c|c|l|}
\hline
& & \\[-0.4cm]
$-\log_2 \nu_2$ &  $-\log_2 \nu_1$ &  Change of the dominant harmonics of $\DFn{1}, \DFn{2}$ \\
& & \\[-0.4cm]
\hline
2.443   &  2.444 & \hspace{2.5cm} (1,0), (1,0)  $\longrightarrow$  \textcolor{red}{(1,1)}, (1,0) \\
2.676   &  2.677 & \hspace{2.5cm} (1,1), (1,0)  $\longrightarrow$  (1,1), \textcolor{blue}{(1,1)} \\
4.112   &  4.113 & \hspace{2.5cm} (1,1), (1,1)  $\longrightarrow$  \textcolor{red}{(1,2)}, (1,1) \\
4.300   &  4.301 & \hspace{2.5cm} (1,2), (1,1)  $\longrightarrow$  (1,2), \textcolor{blue}{(1,2)} \\
5.133   &  5.134 & \hspace{2.5cm} (1,2), (1,2)  $\longrightarrow$  \textcolor{red}{(2,3)},(1,2) \\
5.428   &  5.429 & \hspace{2.5cm} (2,3), (1,2)  $\longrightarrow$  (2,3), \textcolor{blue}{(2,3)} \\
5.971   &  5.972 & \hspace{2.5cm} (2,3), (2,3)  $\longrightarrow$  \textcolor{red}{(3,5)}, (2,3) \\
6.234   &  6.235 & \hspace{2.5cm} (2,3), (3,5)  $\longrightarrow$  (3,5), \textcolor{blue}{(3,5)} \\
\hline
\end{tabular}
\caption{The third column lists the dominant harmonic of $\DFn{1}$ and the dominant harmonic of
 $\DFn{2}$ (both separated by a comma) for the value $\nu=\nu_2$ (left hand side of the arrow)
and for $\nu=\nu_1$ (right hand side of the arrow). 
The $(m_1,m_2)$ harmonic corresponds to the frequency $m_1 \psi_0 - m_2 \theta_0$ of the Fourier
expansion of $\DFn{i}$.
The values of $\nu_2$ and
$\nu_1$, shown in the first and second columns, are such that a bifurcation takes place for $\nu \in (\nu_1,\nu_2)$. We highlight the changes in
the dominant harmonic of $\DFn{1}$ in red while those of $\DFn{2}$
are marked in blue. }
\label{taulanubif}
\end{center}
\end{table}

\begin{figure}[p]
\begin{center}
\epsfig{file=./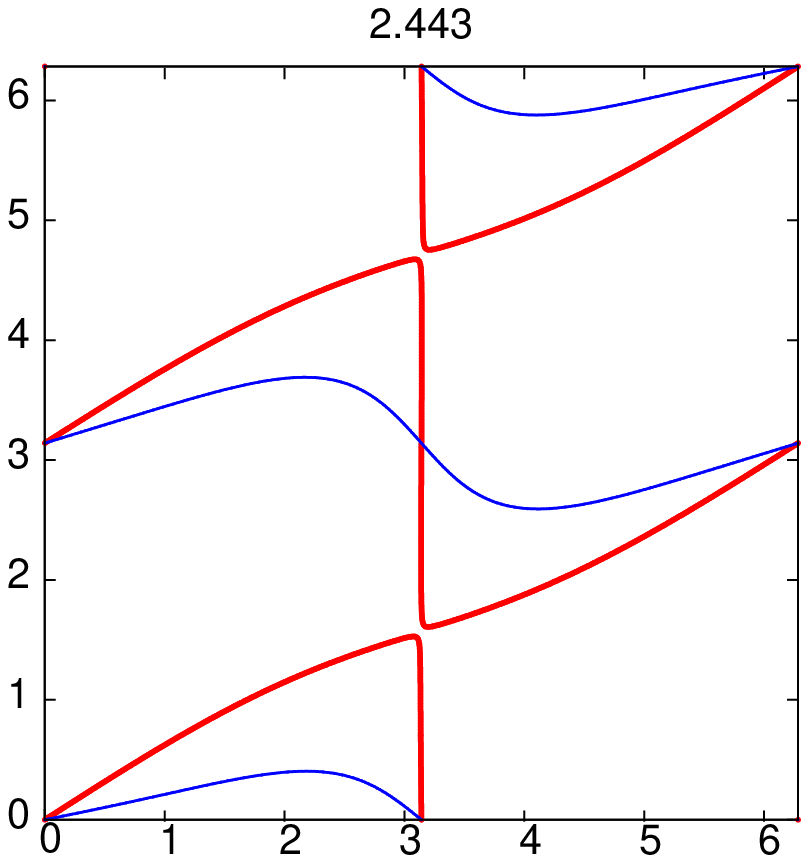,width=75mm,angle=0}
\epsfig{file=./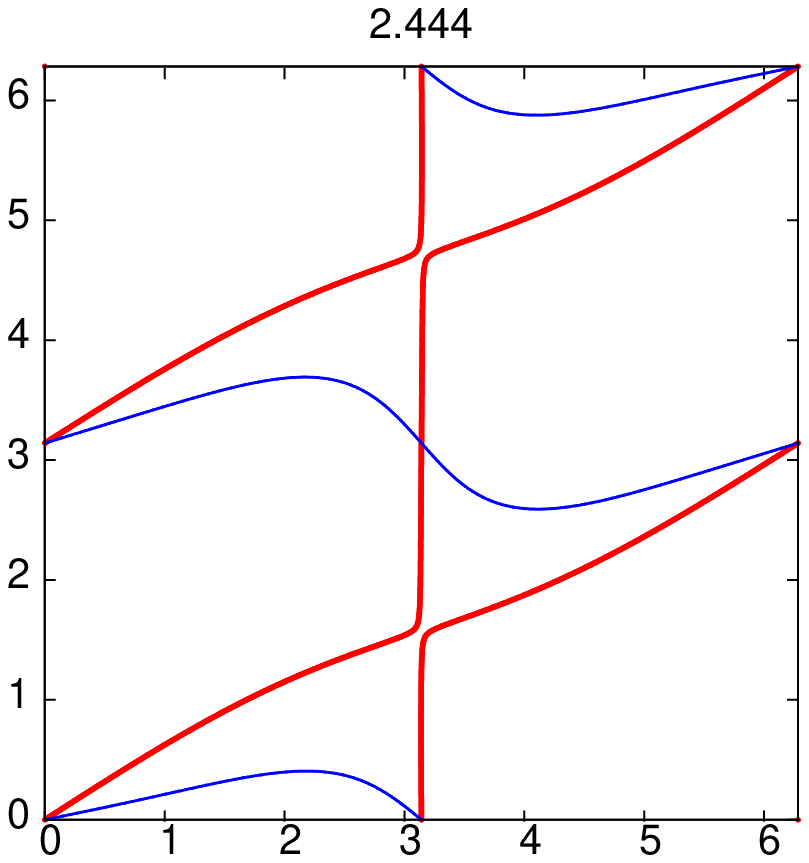,width=75mm,angle=0}
\epsfig{file=./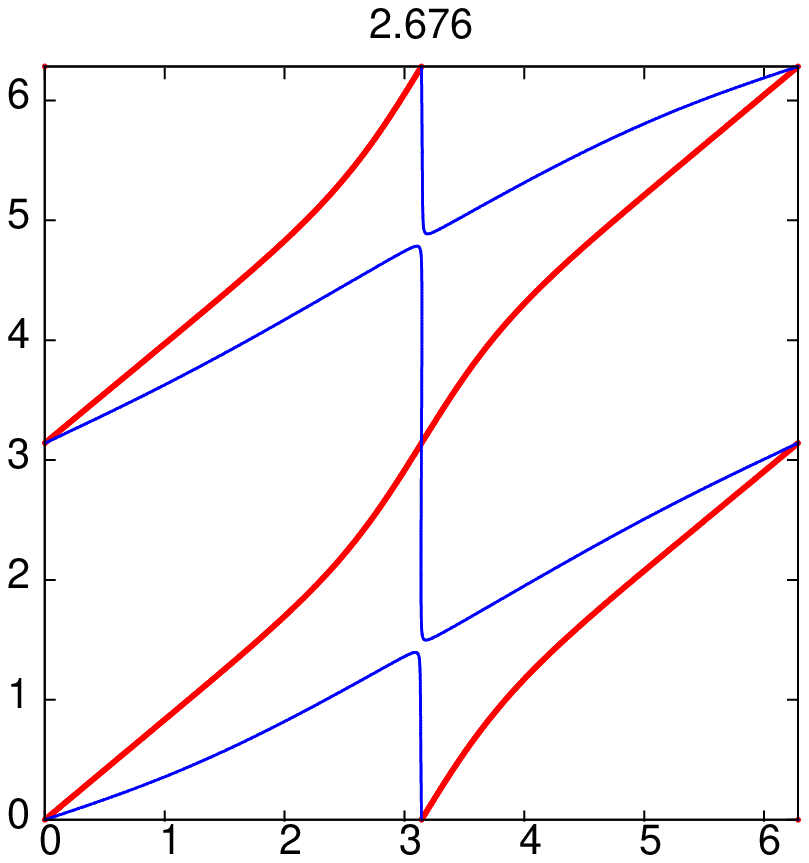,width=75mm,angle=0}
\epsfig{file=./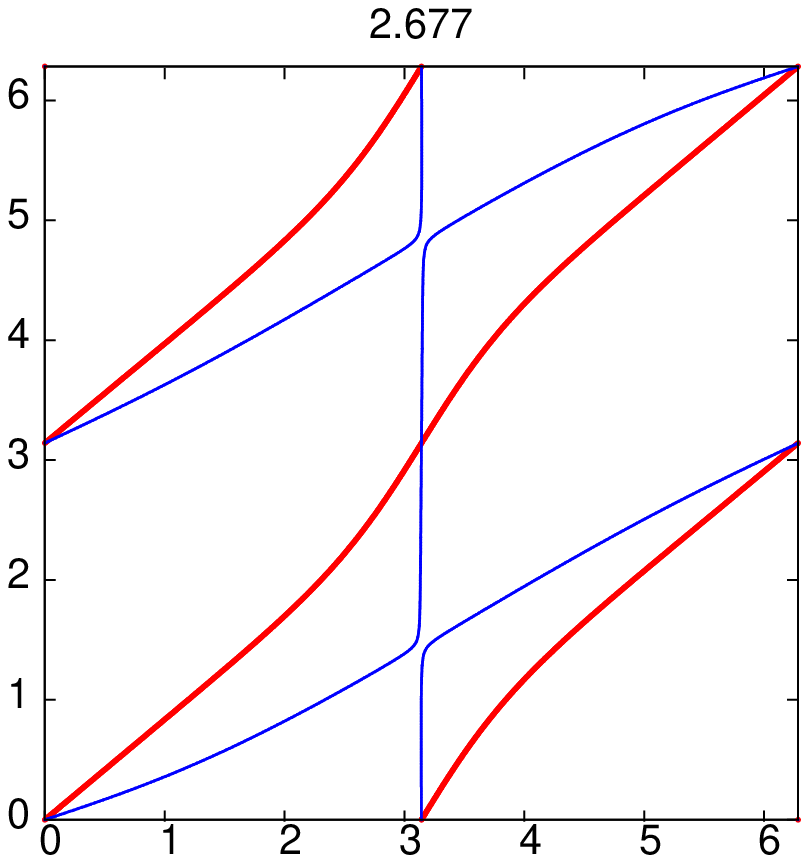,width=75mm,angle=0}
\epsfig{file=./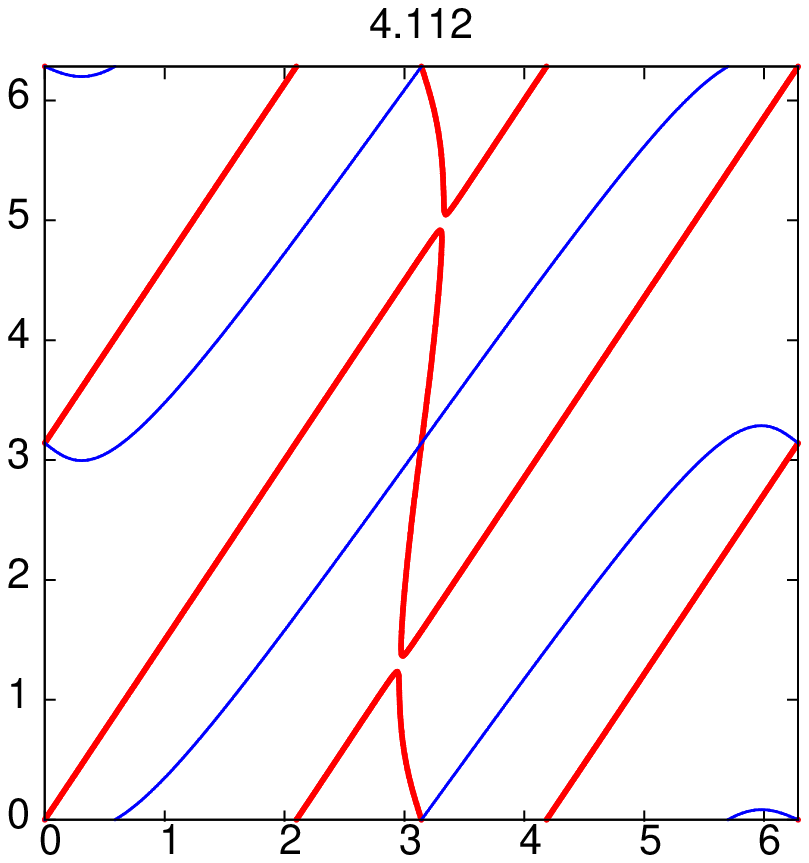,width=75mm,angle=0}
\epsfig{file=./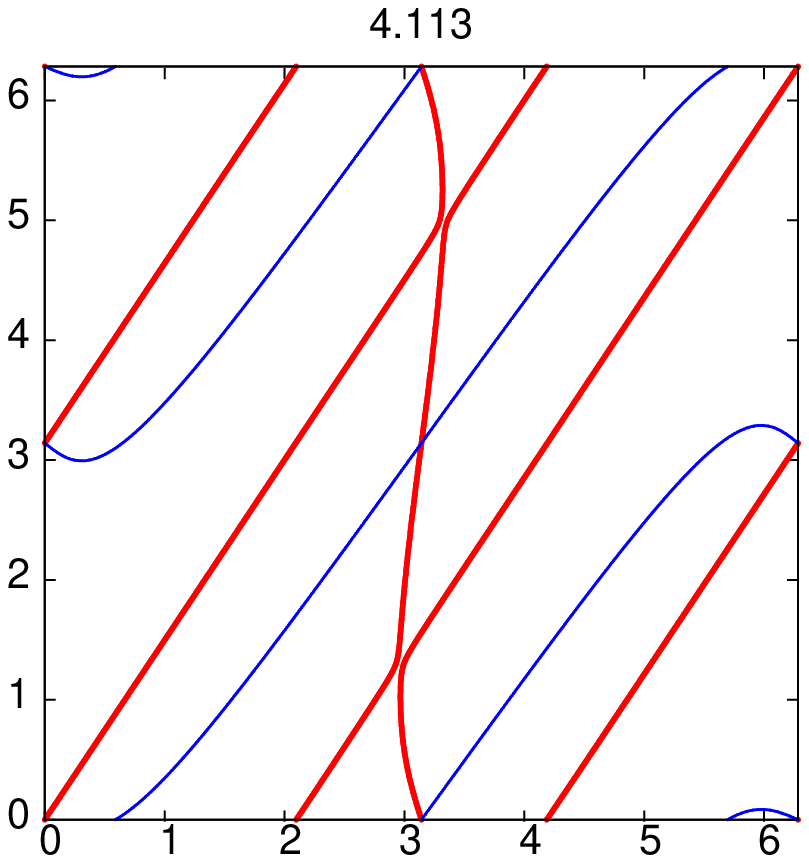,width=75mm,angle=0}
\epsfig{file=./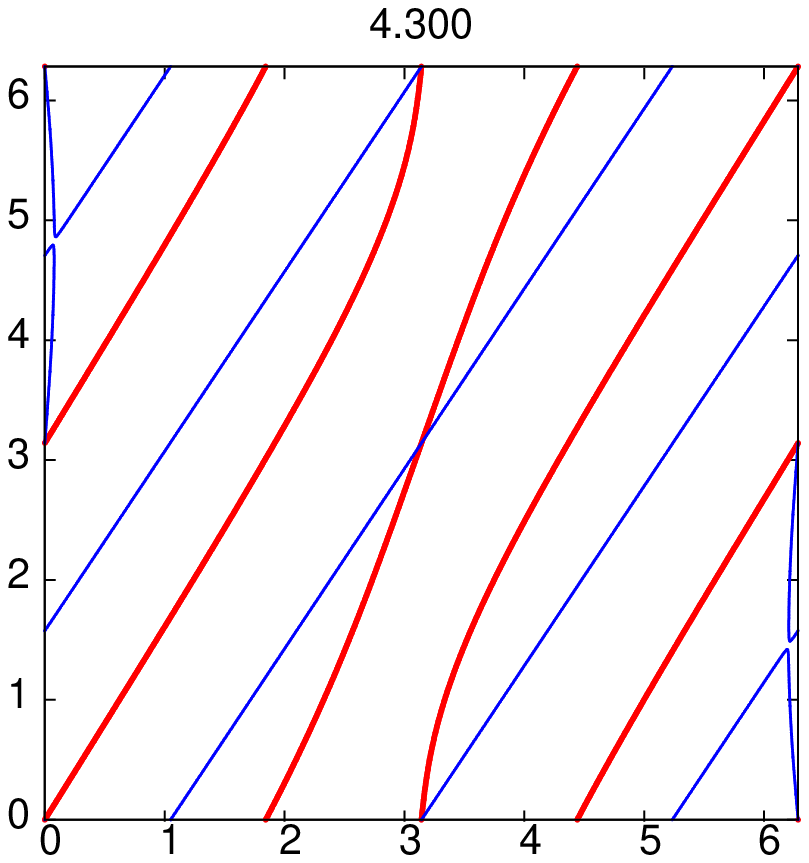,width=75mm,angle=0}
\epsfig{file=./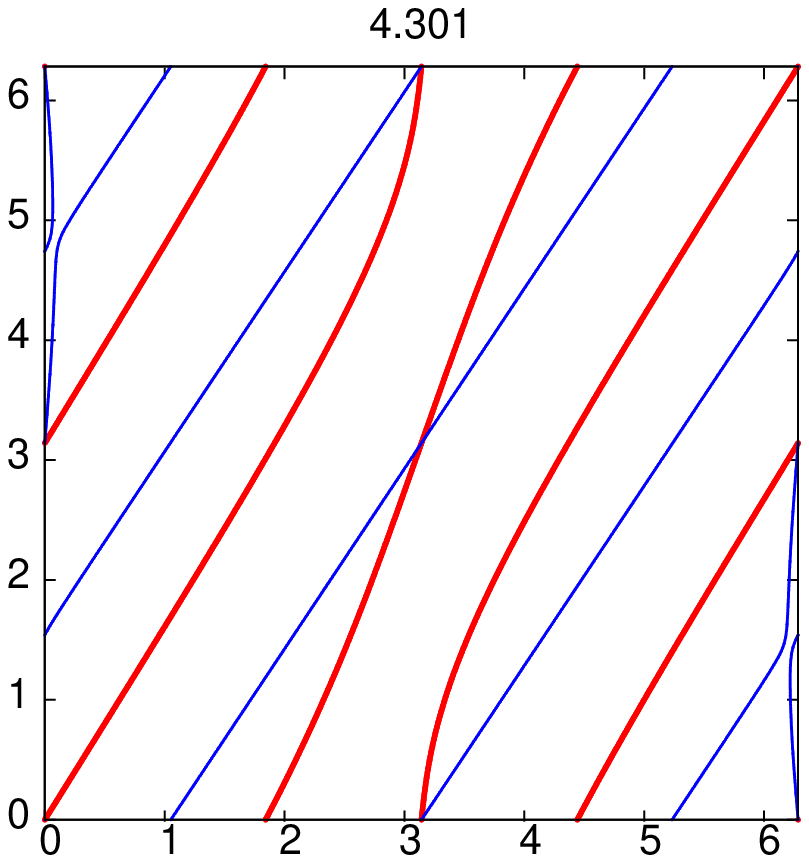,width=75mm,angle=0}
\caption{Nodal lines of $\DFn{1}$ are shown in red. In blue we
represent the ones related to $\DFn{2}$. The squares $[0,2\pi]^2$ represents
the tori parameterized by $(\psi_0,\theta_0)$. Each row corresponds to two different
values of the decreasing parameter $\nu$: before (left) and after (right) the bifurcation (values of
$\nu \geq 2^{-4.301}$).} 
\label{nodals1}
\end{center}
\end{figure}

\begin{figure}[p]
\begin{center}
\epsfig{file=./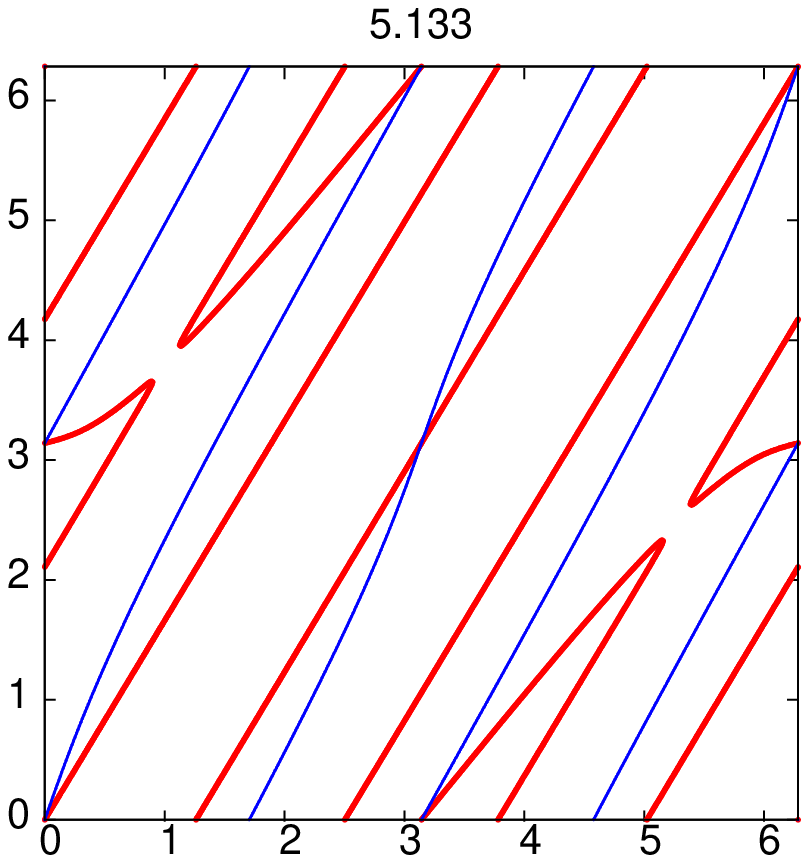,width=75mm,angle=0}
\epsfig{file=./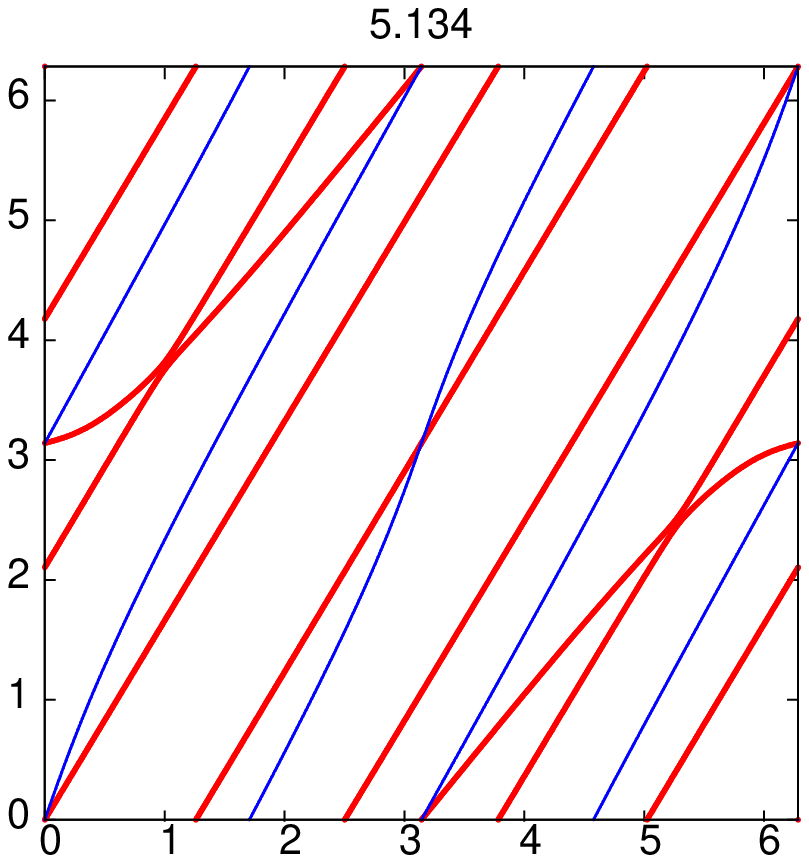,width=75mm,angle=0}
\epsfig{file=./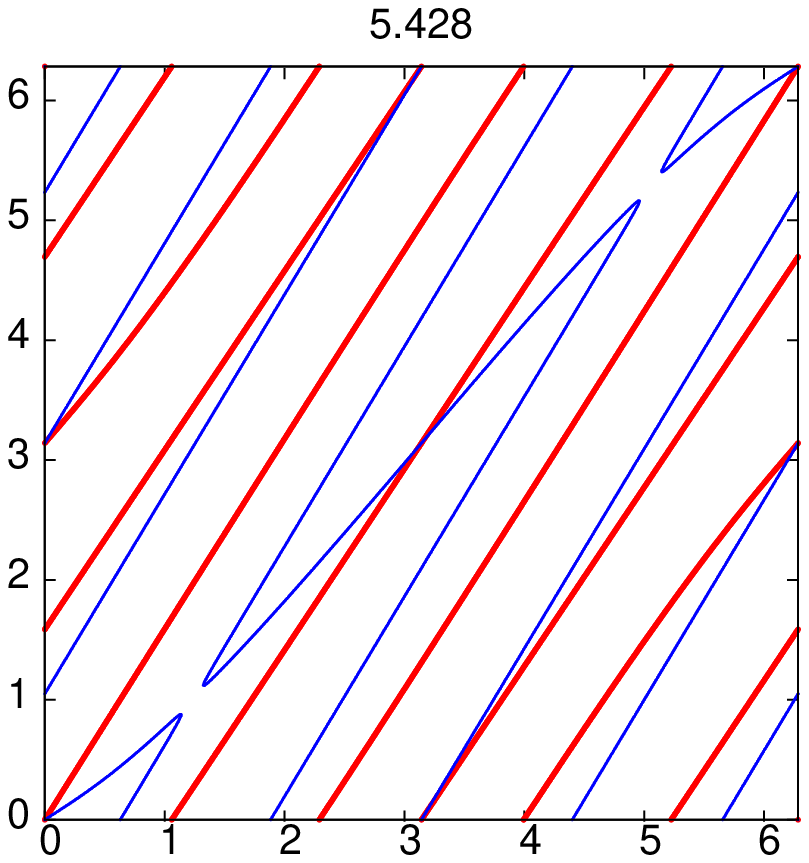,width=75mm,angle=0}
\epsfig{file=./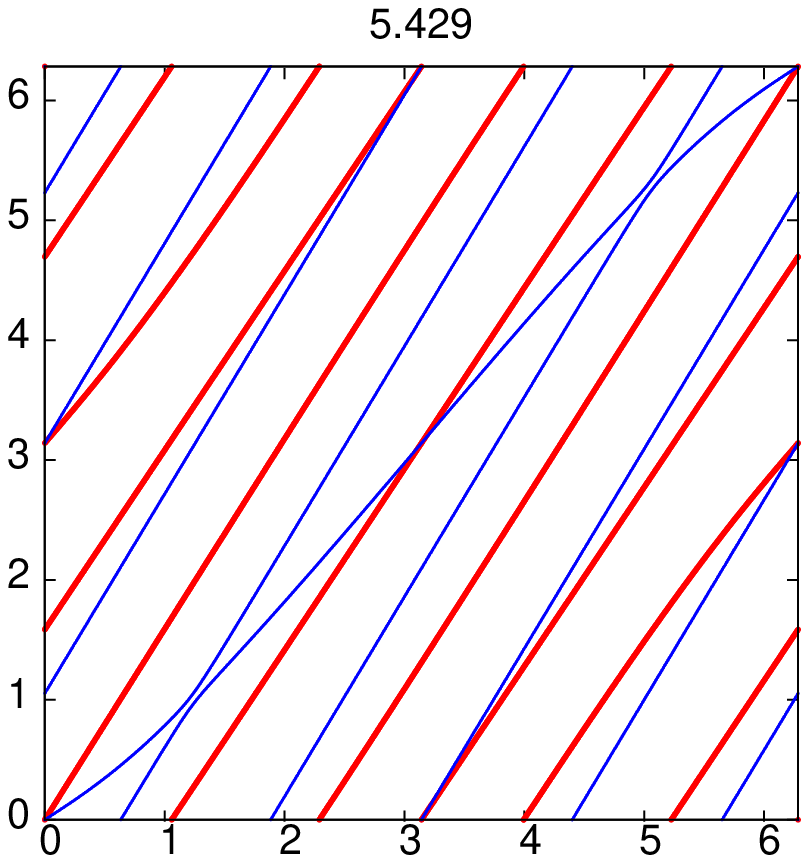,width=75mm,angle=0}
\epsfig{file=./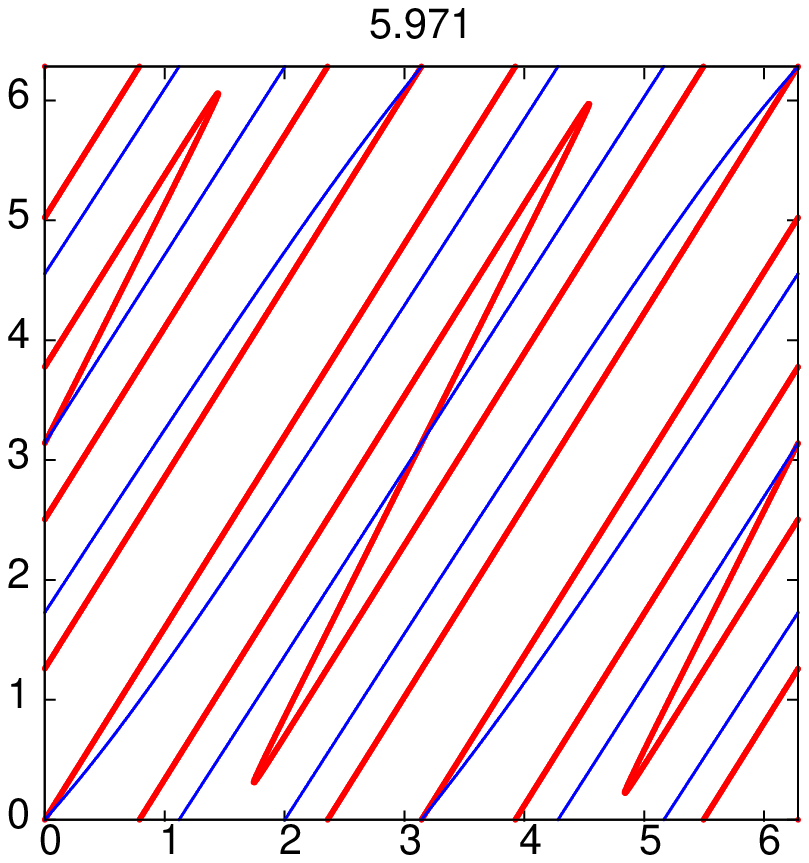,width=75mm,angle=0}
\epsfig{file=./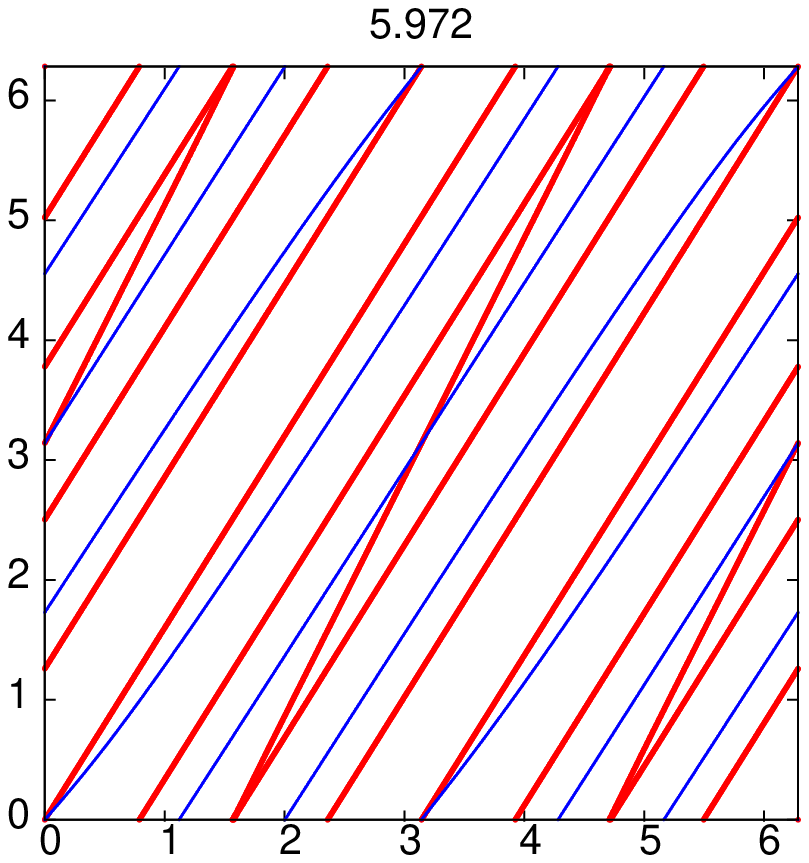,width=75mm,angle=0}
\epsfig{file=./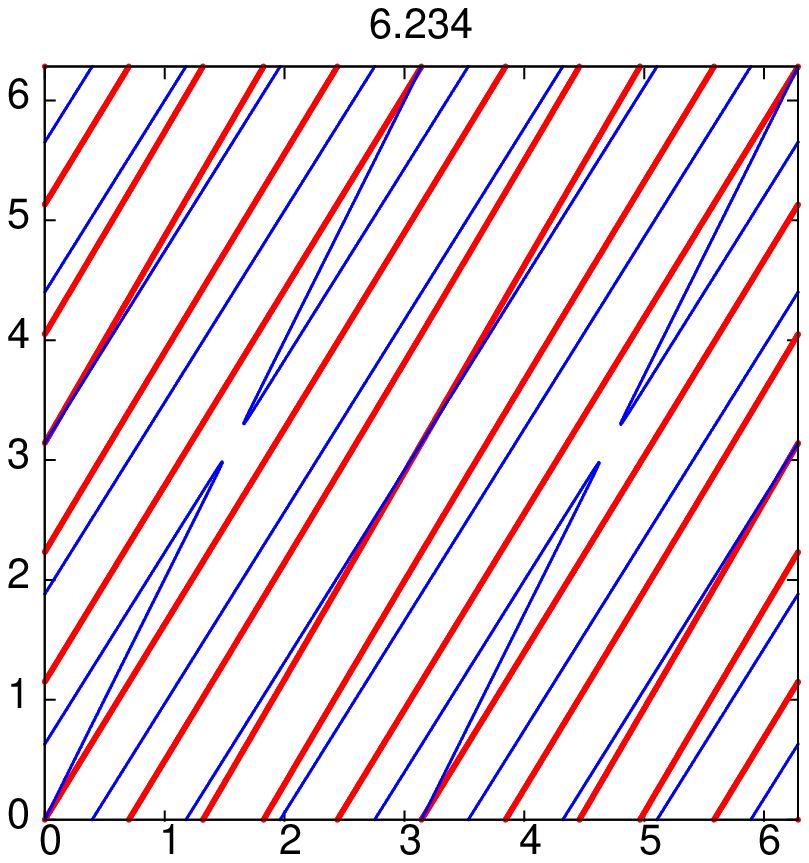,width=75mm,angle=0}
\epsfig{file=./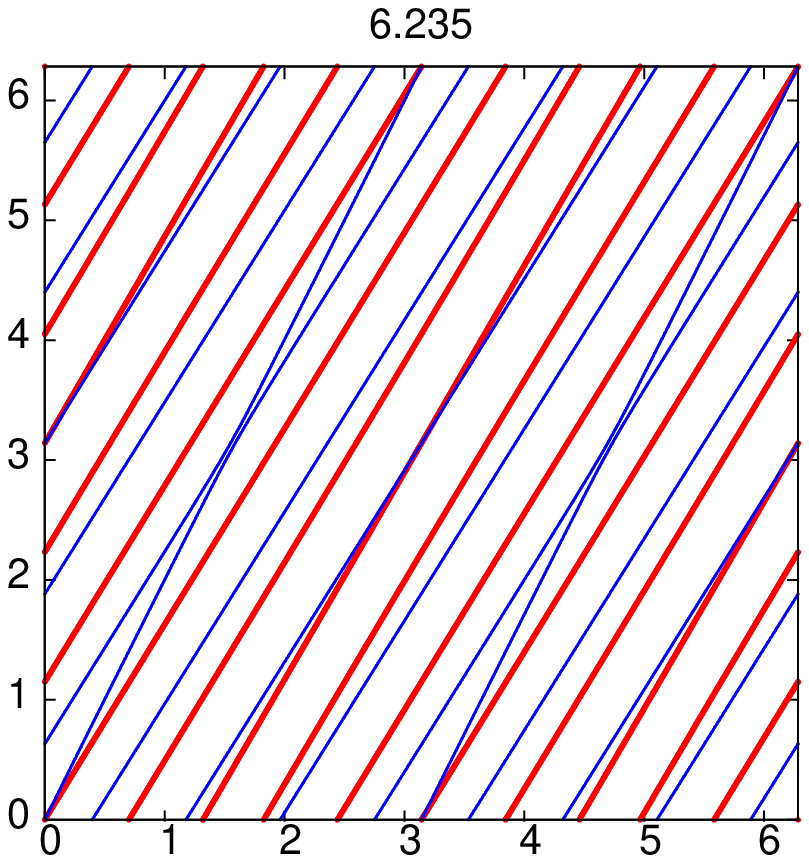,width=75mm,angle=0}
\caption{Continuation of Fig.~\ref{nodals1}: nodal lines for values of
$2^{-6.235} \leq \nu<2^{-4.301}$.} 
\label{nodals2}
\end{center}
\end{figure}

\section{The splitting of the invariant manifolds} \label{Sec:Splitting} 

For the unperturbed system $H_0$ given in (\ref{H0}) the 2-dimensional invariant
manifolds of the origin $W^{u/s}(\bf 0)$ coincide. But this is no longer true
for the perturbed system (\ref{perturbedsystem}), the Hamiltonian perturbation
$\epsilon H_1(\bx,\by,t)$ causes the splitting of the invariant manifolds
$W^{u/s}(\bf 0)$. We will study the behaviour of the splitting of $W^{u/s}(\bf
0)$ as $\nu \rightarrow 0$ (i.e. as the system reduces hyperbolicity) for a
fixed $\epsilon \neq 0$.

As it is well-known the splitting is related to the nearest singularities to the
real axis of the time-parameterization of the unperturbed homoclinic trajectory.
In our case the singularities are located at $\tau_0=\pm \ii \pi/2\nu$.
Moreover, the perturbation $H_1(\bx,\by)$ adds a space singularity $\rho$
located at $y_1=d$ and a time singularity related to $\hat{\theta}=\pm \ii \log(c+\sqrt{c^2-1})$
that restricts the domain of convergence of $f(\theta)$. The three
singularities play a role in the asymptotic behaviour of the splitting as will
be shown later on. We refer to \cite{GuaSea12} where a quasi-periodic perturbation with
state singularities was considered.

\subsection{The derivation of the Poincar\'e-Melnikov function} \label{derMel}

To obtain the expression for the Poincar\'e-Melnikov vector we proceed in a standard way so we just
shortly describe its derivation. 

Let $t_0\in \mathbb{R}$, $\zeta^s_0=(x_0^s, y_0^s)\in W^{s}(\rm{0})$,  
$\zeta^u_0=(x_0^u, y_0^u)\in W^{u}(\rm{0})$ and
$\zeta^{s,u}(t)=(x^{s,u}(t), y^{s,u}(t))$ be the solutions of the Hamiltonian system $H_0+ \epsilon H_1$ such that 
$$
\zeta^{s,u} (t_0)=\zeta^{s,u}_0 =(x_0^{s,u}, y_0^{s,u}).
$$
Clearly we have $\lim_{t\to \infty}\zeta^{s}(t) = \lim_{t\to -\infty}\zeta^{u}(t) =0 $. Then, for $i=1,2$,
\begin{align*}
G_i(\zeta^{s}(t)) & -G_i(\zeta^{s}_0) = \int_{t_0} ^t \frac{d}{dt}[G_i\circ \zeta^s](s)\, ds \\
& =  \int_{t_0} ^t DG_i(\zeta^s(s)) [J DH_0^\top (\zeta^s(s)) + \epsilon J DH_1^\top (\zeta^s(s)) ]\, ds 
= \epsilon \int_{t_0} ^t \{G_i, H_1 \} \circ \zeta^s (s)\, ds ,
\end{align*}
and taking limit when $t$ goes to $\infty$ we get 
$$
G_i(\zeta_0^s)=G_i(\zeta^{s} (t_0)) = -\epsilon \int_{t_0} ^\infty \{G_i, H_1 \} \circ \zeta^s (s)\, ds.
$$
In the same way 
$$
G_i(\zeta_0^u)=G_i(\zeta^{u} (t_0)) = \epsilon \int_{t_0} ^{-\infty} \{G_i, H_1 \} \circ \zeta^u (s)\, ds.
$$
Actually $\zeta^{s,u}(t)$ depend on $\epsilon$. Let $\zeta^{0}(t)$  denote the solution of the system when $\epsilon=0$, with initial condition $\zeta_0$ for $t=t_0$.
We use $(\psi_0,\theta_0)$, which parameterize the unperturbed manifold, to also parameterize
$W^{s,u} (0)$, and we consider $\zeta^{s}_0$, $\zeta^{u}_0$ the points on 
$W^{s} (0)$, $W^{u} (0)$ parameterized by $(\psi_0,\theta_0)$. 
Recall from Section~\ref{Sec:splfun} that $F_i^{u/s}$ denotes the restriction of $G_i$ to $W^{u/s}({\bf 0})$. 
By perturbation theory of invariant manifolds we have
\begin{align*}
\zeta^{s} (t) -\zeta ^0(t) &= O(\epsilon) , \quad \mbox{ uniformly in $t$ for } t\in [t_0, \infty), \\
\zeta^{u} (t) -\zeta ^0(t) &= O(\epsilon) , \quad \mbox{ uniformly in $t$ for } t\in ( -\infty, t_0]. 
\end{align*}
Since $(\psi_0, \theta_0)$ and $t_0$ are not independent we assume that $t_0=0$,
that is, the corresponding Poincar\'e-Melnikov integrals depend on the two
phase variables $\psi_0$ (initial ``angle'' phase, see (\ref{homoH0})) and
$\theta_0$ (initial ``time'' phase, see (\ref{perturbedsystem})).  
Therefore the splitting function is given by 
$$
F_i^u(\zeta^{u}_0) -F_i^s(\zeta^{s}_0) =\epsilon \int_{-\infty} ^\infty \{G_i, H_1 \} \circ \zeta^0 (s)\, ds
+ O(\epsilon^2) =: \epsilon M_i(\psi_0,\theta_0) + \mathcal{O}(\epsilon^2).
$$
Below we denote by $(\DFt{1}(\psi_0,\theta_0),\DFt{2}(\psi_0,\theta_0)) = (\epsilon M_1(\psi_0,\theta_0),\epsilon M_2(\psi_0,\theta_0))$ the so-called (first order) 
Poincar\'e-Melnikov approximation function.

\subsection{The expression of the Poincar\'e-Melnikov integrals}  \label{sect_expr_PoincMel}

As before, see (\ref{perturbedsystem}), we write
$H_1(\bx,\by,t)=g(y_1)f(\theta)$ where the expansions of $f$ and $g$ are given
in (\ref{expcs}) and (\ref{tayg}), respectively. 
Since the Poisson brackets are 
$$
\{G_1,H_1\}= y_2 f(\theta) g'(y_1), \qquad  \{G_2,H_1\}= x_1 f(\theta) g'(y_1),
$$
and
\begin{equation} \label{expds}
g'(y_1) = \sum_{k \geq 0} d_k y_1^{4+k}, \quad \mbox{ where } d_k =(5+k)d^{-k-1},
\end{equation}
the Poincar\'e-Melnikov approximation of the splitting distance is 
\begin{align}
\DFt{1}&=4 \epsilon \int_{-\infty}^{\infty}  \sin(t+\psi_0) \, f(\gamma t+\theta_0) \, \sum_{k \geq 0}  \frac{ \sqrt{2^{k+1}} \, d_k \, (\cos (t+\psi_0) )^{4+k}}{(\cosh(\nu t))^{5+k}} dt , \nonumber \\[-0.25cm]
 & \label{DG1exp} \\[-0.25cm]
\DFt{2}&=-4 \epsilon \int_{-\infty}^{\infty}  f(\gamma t+\theta_0) \, \sum_{k \geq 0}  \frac{ \sqrt{2^{k+1}} \, d_k \, (\cos (t+\psi_0) )^{5+k} \, \sinh(\nu t)}{(\cosh(\nu t))^{6+k}} dt, \nonumber
\end{align}
where, for simplicity, we have not written the dependence on $\psi_0,\theta_0$ in $\DFt{i}$.  

Since the Poincar\'e-Melnikov integral is linear with respect to the
perturbation $\epsilon H_1(\bx,\by,t)$ we can write
$M(\psi_0,\theta_0)=(M_1(\psi_0,\theta_0),M_2(\psi_0,\theta_0))$ as an infinite
sum and analyse the contribution to the splitting of each individual term of
the series of $H_1$.

The Fourier series of the terms of the form $(\cos(\psi))^m$  and
$(\cos(\psi))^m \sin(\psi)$, for $m \in \mathbb{Z}^+$, that appear in the
previous equations are given by
\begin{equation}  \label{ccs}
(\cos(\psi))^m= \!\! \sum_{i=0}^{E\left(\frac{m}{2}\right)} \!\! a_{m,i} \cos((m-2i)\psi), \qquad  (\cos(\psi))^m \sin(\psi)= \!\! \sum_{i=0}^{E\left(\frac{m+1}{2}\right)} \!\!  b_{m,i} \sin((m+1-2i)\psi),
\end{equation}
where $E(x)$ denotes the integer part of $x$, and
\begin{align} \label{ccscoefs}
a_{m,i} &= \frac{1}{2^{m-1}} \left(\begin{array}{c}m\\i\end{array}\right), \ 0\leq i < m/2, \quad \mbox{ and } \quad a_{m,m/2}=\frac{1}{2^m} \left(\begin{array}{c}m\\m/2\end{array}\right) \mbox{ if } m \mbox{ even,} \\
b_{m,i} &= \frac{m+1-2i}{2^m(m+1)}    \left(\begin{array}{c}m+1\\i\end{array}\right),  \ 0\leq i \leq (m+1)/2. \nonumber
\end{align}

To compute the Poincar\'e-Melnikov integral for a general perturbation
$g(\bx,\by) f(\theta)$ the following comments apply:
\begin{itemize}
\item An expression of the form $x_1^{i_1}x_2^{i_2}y_1^{j_1}y_2^{j_2}$ in the Poisson bracket, when evaluated on the homoclinic orbit, becomes
\[ (-1)^{i_1+i_2}2^{(i_1+i_2+j_1+j_2)/2}(\cos(\psi))^{i_1+j_1}(\sin(\psi))^
{i_2+j_2}\frac{(\sinh(\nu t))^{i_1+i_2}}{(\cosh(\nu t))^{2i_1+2i_2+j_1+j_2}}.\]
The trigonometric terms can be reduced to the sum of expressions of the form
$(\cos(\psi))^m$ or $(\cos(\psi))^m\sin(\psi)$, depending on whether
$i_2+j_2$ is even or odd. In a similar way the hyperbolic terms can be reduced
to the sum of negative powers of $\cosh(\nu t)$ or to such a sum times
$\sinh(\nu t)$, depending on whether $i_1+i_2$ is even or odd.  

\item Using the expansions (\ref{ccs})-(\ref{ccscoefs}), and assuming the
expansion of the time-periodic part is \[ f(\theta)=\sum_{j\geq
0}a_j\cos(j\theta)+\sum_{j> 0}b_j\sin(j\theta), \qquad a_j,b_j \in \mathbb{R}\] 
the integrals required to evaluate $\DFt{1}, \DFt{2}$ can be reduced to integrals of  
the product of $(\cosh(\nu t))^{-n},n\geq 1$ or $(\cosh(\nu t))^{-n}\sinh(\nu
t),n\geq 2,$ by a function of the form 
\[ \cos(k\psi)\cos(j\theta),\quad \cos(k\psi)\sin(j\theta),\quad \sin(k\psi)\cos(j\theta)\quad {\mbox{or}}\quad \sin(k\psi)\sin(j\theta), \qquad   k,j \in \mathbb{Z}^+.\]

\item Recall that $\psi=t+\psi_0$ and $\theta=\gamma t+\theta_0$. Expanding $f(\theta)$ 
and $\cos(\psi)$ and taking into account that the integrals of odd functions in $\nR$ are zero,
the computation of $\DFt{i}$, $i=1,2$, reduces to the computation of integrals of the form 
\begin{equation} \label{I1I2}
I_1(s,\nu,n)=\int_{\nR} \frac{\cos(st)}{(\cosh(\nu
t))^n} \, dt , \ n \geq 1, \quad I_2(s,\nu,n)=\int_{\nR} \frac{\sinh(\nu t)\sin(st)}{(\cosh(\nu
t))^n} \, dt, \ n \geq 2,
\end{equation}
for $\nu \neq 0$ (we will only be interested in $\nu>0$), where we have
introduced the parameter $s = k\pm j\gamma$. 

\item Furthermore, one has 
$$\displaystyle{I_2(s,\nu,n)=\frac{s}{\nu(n-1)}I_1(s,\nu,n-1)}, \quad n\geq 2.$$
Hence, it suffices to compute $I_1(s,\nu,n)$.

\item One has
\[ I_1(s,\nu,1) = \frac{\pi}{\nu} \frac{1}{\cosh (s\pi/(2\nu))}, \qquad I_1(s,\nu,2) = \frac{s \pi}{\nu^2} \frac{1}{\sinh (s\pi /(2\nu))},\]
and, integrating by parts twice, one obtains
\[I_1(s,\nu,n)= \frac{s^2 + (n-2)^2 \nu^2}{\nu^2 (n-1)(n-2)} I_1(s,\nu,n-2), \quad n \geq 3.\]
That is,
\[\nu^n I_1(s,\nu,n) = \frac{\pi}{(n-1)! \cosh(s\pi/(2\nu))} P_{n-1}(s,\nu), \quad \text{ for $n$ odd,} \]
and
\[\nu^n I_1(s,\nu,n) = \frac{\pi}{(n-1)! \sinh(s\pi/(2\nu))} P_{n-1}(s,\nu),  \quad \text{ for $n$ even,} \]
where $P_j(s,\nu)$, $j \geq 0$, are the homogeneous polynomials of degree $j$ in $(s,\nu)$ 
that satisfy the recurrence 
\begin{equation} \label{recurP}
P_0(s,\nu)=1, \qquad P_1(s,\nu)=s, \qquad P_j(s,\nu)=(s^2+(j-1)^2 \nu^2) P_{j-2}(s,\nu), \ j \geq 2.
\end{equation}
In particular, we see that the terms in the series of $\DFt{i}$, $i=1,2$, decay
to zero at least as $\exp(-|s| \pi/(2\nu))$ as $\nu \rightarrow 0$. We note
that, however, the functions $\DFt{i}$, $i=1,2$, may decay in a slower way, see
Appendix~\ref{exempleduffing}.
\end{itemize}

At this point we have all the ingredients to produce an algorithm to obtain
expressions for $\DFt{1}, \DFt{2}$ with any accuracy. 

\begin{remark} \label{remark_k1k2}
The analyticity domain in the spatial coordinates $(\bx,\by)$ and the
analyticity strip in time $t$ of the perturbation $\epsilon H_1(\bx,\by,t)$ can
be, in general, of different size. 
Denote by $m(\bx,\by,\theta)$ a term of the Taylor-Fourier expansion of $H_1(\bx,\by,t)$, where $t=(\theta-\theta_0)/\gamma$. That is,
$m(\bx,\by,\theta)$ is a monomial of degree $k_1 \geq 0$ in $(\bx,\by)$ with the harmonic $k_2 \in \mathbb{Z}$ in $\theta$.
Assume that there exist $\rho_1,\rho_2>0$ such that the coefficient $m$ of this monomial satisfies 
\[ |m| \leq M \exp(-k_1 \rho_1 - |k_2| \rho_2),\]
with $M>0$ and where $(\bx,\by)$ belongs to a compact domain containing the
unperturbed real separatrices.\footnote{In particular, this assumption holds for the concrete example (\ref{perturbedsystem}) considered in this paper for $c>1$ and $d> \sqrt{2}$, see the expansions (\ref{expcs}) and (\ref{tayg}).}
Then, the contribution $T(k_1,k_2)$ of the
monomial to the Poincar\'e-Melnikov integral is of the form 
$$
T(k_1,k_2) \sim \epsilon A \nu^B \exp(-k_1 \rho_1 -|k_2| \rho_2) \exp\left(\frac{-|s| \pi}{2\nu} \right), \text{ with } s=k_1-|k_2|\gamma, A>0.
$$

We note that it may happen that $T(k_1,k_2)$ dominates the
behaviour of the splitting for $\nu$ small even if $k_1$ and $k_2$ (and the
total order $k=k_1+|k_2|$) are large provided that $k_1- |k_2| \gamma$ is small
enough. For example, consider $\rho:=\rho_1=\rho_2$ and assume that $\gamma$
verifies $|k_1 - |k_2| \gamma| > C |k|^{-\tau}$ with $\tau \geq 1$. 
Then $T(k_1,k_2) \sim T(k)=\epsilon A \nu^B \exp(-k
\rho) \exp (-C \pi/2 \nu k^{\tau} )$ and the largest contribution is
obtained for $k=k_* \sim (C \pi \tau/ 2 \rho \nu)^{1/(1+\tau)}$, which gives a term
$T(k_*) = \mathcal{O}(\exp(-c/\nu^{1/(\tau+1)}))$. This agrees, provided $\tau>1$,
with the exponentially small remainder obtained after an optimal number of
steps of the averaging procedure for a quasi-periodic function, see details in
\cite{Sim94}. When $\tau=1$ there are many terms that give the same
contribution and the exponentially small (in $\nu$) upper bound in the
averaging procedure gains an extra logarithmic term \cite{Sal04}.

\end{remark}

Summarizing, using (\ref{ccs}) and (\ref{ccscoefs}), we can rewrite (\ref{DG1exp}) as
\begin{align*} 
\DFt{1} &= \epsilon \int_{-\infty}^{\infty} \sum_{k \geq 0} \sum_{0 \leq 2 i \leq 4+k} \sum_{j \geq 0} d_k b_{4+k,i} c_j 2^{\frac{5+k}{2}} \frac{1}{(\cosh(\nu t))^{5+k}} \sin((k+5-2i)\psi) \cos(j \theta) \, dt, \\
\DFt{2} &= -\epsilon \int_{-\infty}^{\infty} \sum_{k \geq 0} \sum_{0 \leq 2 i \leq 5+k} \sum_{j \geq 0} d_k a_{5+k,i} c_j 2^{\frac{5+k}{2}} \frac{\sinh(\nu t)}{(\cosh(\nu t))^{6+k}} \cos((k+5-2i)\psi) \cos(j \theta) \, dt.
\end{align*}
Taking into account that $\psi= t + \psi_0$, $\theta=\gamma t + \theta_0$, and expanding the terms
$\sin(\ell (t+\psi_0)) \cos(j(\gamma t + \theta_0))$ and $\cos(\ell (t+\psi_0)) \cos(j(\gamma t + \theta_0))$, where $\ell=k+5-2i$,
in the previous expression one reduces to evaluate integrals $I_1(s,\nu,n)$ and $I_2(s,\nu,n)$, given by (\ref{I1I2}), where
$s=\ell \pm j \gamma$. Concretely, using the expansions
\begin{align*}
 \sin(\ell(t+\psi_0)) & \cos(j(\gamma t+\theta_0))=\frac{1}{2}\left[\sin((\ell+j\gamma)t+
   \ell \psi_0+j\theta_0)+\sin((\ell-j\gamma)t+\ell \psi_0-j\theta_0)\right] \\
 =&\frac{1}{2}\left[\sin((\ell+j\gamma)t)\cos(\ell \psi_0+j\theta_0)+
                    \cos((\ell+j\gamma)t)\sin(\ell \psi_0+j\theta_0)\right] \\
 &+\frac{1}{2}\left[\sin((\ell-j\gamma)t)\cos(\ell \psi_0-j\theta_0)+
                    \cos((\ell-j\gamma)t)\sin(\ell \psi_0-j\theta_0)\right], \\
 \cos(\ell(t+\psi_0))& \cos(j(\gamma t+\theta_0))=\frac{1}{2}\left[\cos((\ell+j\gamma)t+
   \ell \psi_0+j\theta_0)+\cos((\ell-j\gamma)t+\ell \psi_0-j\theta_0)\right] \\
 =&\frac{1}{2}\left[\cos((\ell +j\gamma)t)\cos(\ell \psi_0+j\theta_0)-
                    \sin((\ell +j\gamma)t)\sin(\ell \psi_0+j\theta_0)\right] \\
 & +\frac{1}{2}\left[\cos((\ell -j\gamma)t)\cos(\ell \psi_0-j\theta_0)-
                    \sin((\ell -j\gamma)t)\sin(\ell \psi_0-j\theta_0)\right],
\end{align*}
one obtains
\begin{align} \label{exprdG1dG2_1} 
\nonumber \DFt{1} & =   \displaystyle{\epsilon \sum_{j\geq 0} c_j \sum_{k\geq 0} 2^{\frac{3+k}{2}} d_k \!\! \sum_{0 \leq 2 i \leq 4+k} \!\!\!\!\! b_{4+k,i} \sum_{l = \pm 1} I_1(k\!+\!5\!\!-2i\!+\! l j \gamma, \nu, k\!+\!5) \sin((k\!+\!5\!-\!2i)\psi_0 \!+\! lj \theta_0), } \vspace{0.2cm} \\
\DFt{2} & =   \displaystyle{-\epsilon \sum_{j\geq 0} c_j \sum_{k\geq 0} 2^{\frac{3+k}{2}} d_k \!\! \sum_{0 \leq 2 i \leq 5+k} \!\!\!\!\! a_{5+k,i} \sum_{l = \pm 1} I_2(k\!+\!5\!-\!2i\!+\! l j \gamma, \nu, k\!+\!6) \sin((k\!+\!5\!-\!2i)\psi_0 \!+\! lj \theta_0).} 
\end{align}

\section{Comparison between the splitting and the Melnikov approximation} \label{Sec:ValMel}

Note that our example fits within a non-perturbative ($\epsilon$ is considered fixed)
singular (when $\nu \rightarrow 0$ the system loses hyperbolicity)
splitting case as described in \cite{DelRam99}. The fact that the splitting
is well-approximated by the Poincar\'e-Melnikov (vector) approximation $\epsilon
M(\psi_0,\theta_0)$ must be justified in this context since, a priori, the
$\mathcal{O}(\epsilon^2)$ error terms in the Poincar\'e-Melnikov approach can
dominate for small enough values of $\nu$. 

That is, in order to justify the use of the Poincar\'e-Melnikov approach one
has to estimate the relative error term by checking that the constant in
$\mathcal{O}(\epsilon^2)$ decays together with $\nu$ in an exponentially small
way, becoming dominated by the (exponentially small in $\nu$) term
$\mathcal{O}(\epsilon)$. The necessity of estimating the relative error was
observed in \cite{San82}, see also \cite{DelRam99,GelLaz01}. 

The rigorous justification of the validity of the Melnikov integral is an
interesting but difficult problem. In this work we are not going to deal with
it. Instead, in this subsection, we choose $\gamma=(\sqrt{5}-1)/2$ and we
compare the {\em amplitude of the splitting} $\DFn{1}$ and $\DFn{2}$ computed directly
using numerical methods (i.e.  computing the invariant manifolds and the
difference between them in a mesh of points) as explained in
Section~\ref{sect_num} with the values obtained by using the Poincar\'e-Melnikov
integral and the recurrences detailed in Section~\ref{sect_expr_PoincMel}. 

Let us clarify what we refer to by amplitude of the splitting. When we proceed
numerically we compute the maximum of the absolute value of the distance
between the invariant stable/unstable manifolds attained in a fundamental
domain (i.e. on a torus parameterized by the angles $(\psi_0,\theta_0)$).  On
the other hand, when we proceed by evaluating the first order
Poincar\'e-Melnikov approximation using the expressions (\ref{exprdG1dG2_1}),
we only take into account those terms that, for the value of $\nu$ considered, give a relative contribution larger
than $10^{-10}$ to the total sum. Adding the contributions of these harmonic
terms we obtain an approximation of $\DFt{i}$. In the following, both
quantities are referred as amplitude of the splitting and are denoted by
$|\DFt{i}|$.

For $\gamma=(\sqrt{5}-1)/2$ we have computed, for more than 1000 values of
$\log_2(\nu)$, the amplitude of the splitting using both approaches. The
results are displayed in Fig.~\ref{ampli2}.  Note that from
Remark~\ref{remark_k1k2}, since we have taken a constant type frequency $\gamma$,
we expect the contribution of each term of the Taylor-Fourier expansion of the
splitting function to be $\mathcal{O}(\exp(-c/\sqrt{\nu}))$. Accordingly we
display $\sqrt{\nu} \log(|\DFt{i}|/\epsilon)$ as a function of $\log_2(\nu)$
in the figure. The direct numerical computations are done up to $\nu
\lessapprox 2^{-7}$, since for smaller values of $\nu$ they require a large
number of digits and a large computing time. As an example, for $\epsilon=10^{-3}$, $\nu=2^{-10}$ 
one has $|\DFt{i}| = \mathcal{O}(10^{-32})$. However, we can compute the
Poincar\'e-Melnikov integral up to much smaller values of $\nu$.

\begin{figure}[ht]
\begin{center}
\epsfig{file=./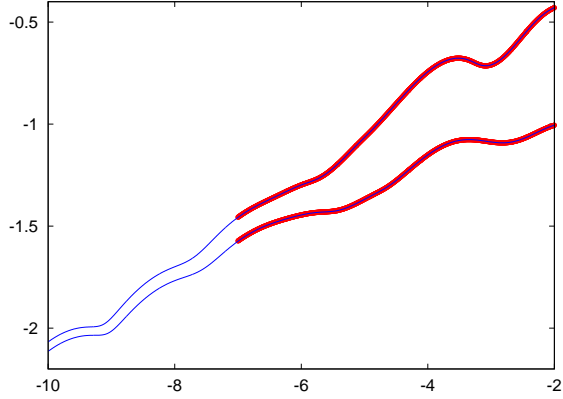,width=8cm} 
\end{center}
\caption{We represent $\sqrt{\nu} \log(|\DFt{i}|/\epsilon)$, for $i=1$ (bottom
curve) and $i=2$ (top curve), as a function of $\log_2(\nu)$.  In red points we
show the direct numerical computations of the amplitude of the splitting. The blue
line shows the values obtained using the Poincar\'e-Melnikov integrals
through the expressions derived theoretically to evaluate them. 
}
\label{ampli2}
\end{figure}

Note the excellent agreement between the numerical and the theoretical
methodologies. This accurate agreement supports the fact that the first order
Melnikov integral asymptotically describes the splitting. In particular, this
numerical check makes us confident to investigate the asymptotic behaviour of
the splitting for smaller values of $\nu$ using the first order approximation
of the splitting function given by the Poincar\'e-Melnikov integral. This is
the goal of the next section.

Finally, in Fig.~\ref{nterms} left we display the values of $\DFt{i}$,
$i=1,2$, for different $\nu \in [2^{-24}, 2^{-2}]$.  We have used a grid with
spacing $0.005$ in $\log_2(\nu)$. In the right plot we show the number of
harmonics that contribute to $\DFt{i}$, $i=1,2$. Each harmonic comes from
the contribution of different $(i,j,k)$-terms in expression
(\ref{exprdG1dG2_1}), where, as before, we only have taken into account those
terms with relative contribution larger than $10^{-10}$.  For the largest 
values of $\nu$ considered in the figure the dominant harmonic is computed as a
combination of up to $14$ different terms.  The values of $\DFt{i}$,
$i=1,2$, are shown in the left panel of the same figure. We observe, in
particular, that for $\nu < 2^{-12}$ the number of harmonics used to compute
the splitting function reduces to one with the exception of small intervals of
$\nu$ where two terms are used. This is related to the dominant harmonics of
the splitting function: for most values of $\nu$ only one term is relevant in
$\DFt{i}$ meaning that there is a dominant harmonic of the splitting
function, but when a change of dominant harmonic of the splitting function
takes place one has to consider two terms (meaning that two harmonics have a
similar contribution in such a range of $\nu$). Note also that the length of
the interval of $\nu$ where the computation of $\DFt{i}$ requires two harmonics decreases as
$\nu \rightarrow 0$. 

\begin{figure}[ht]
\begin{center}
\epsfig{file=./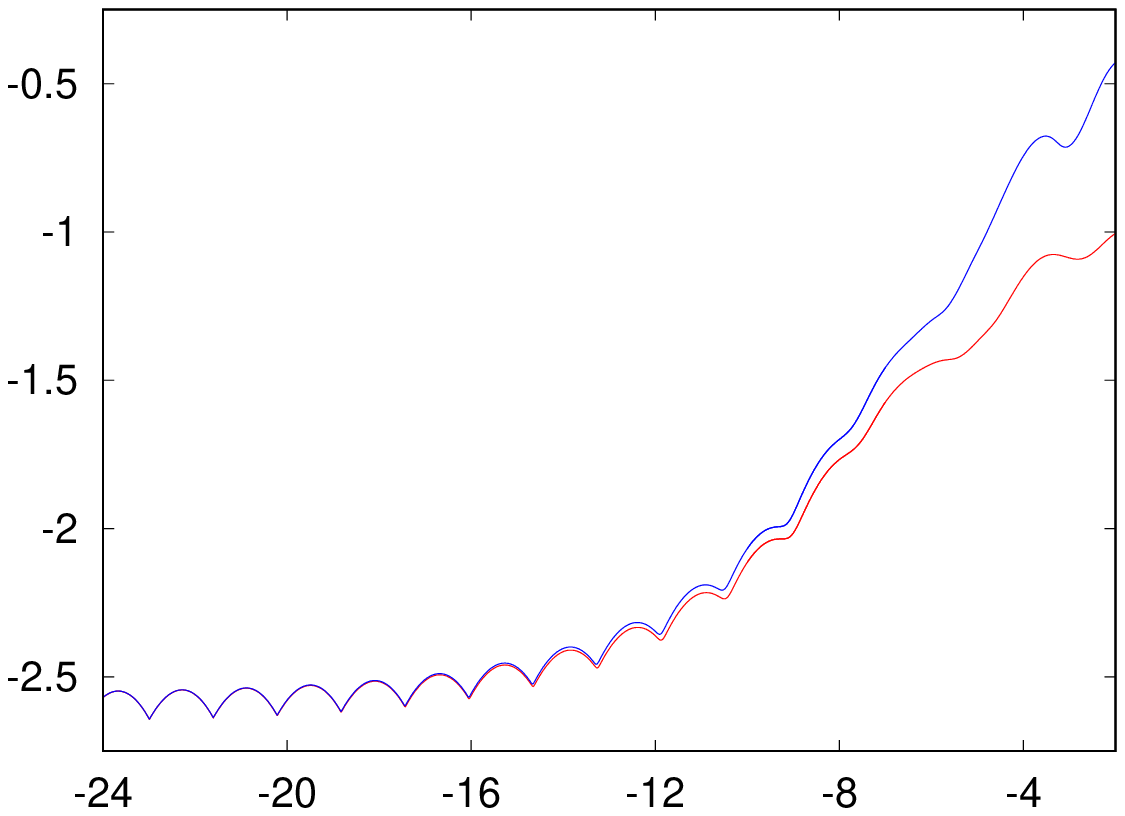,width=7cm} 
\epsfig{file=./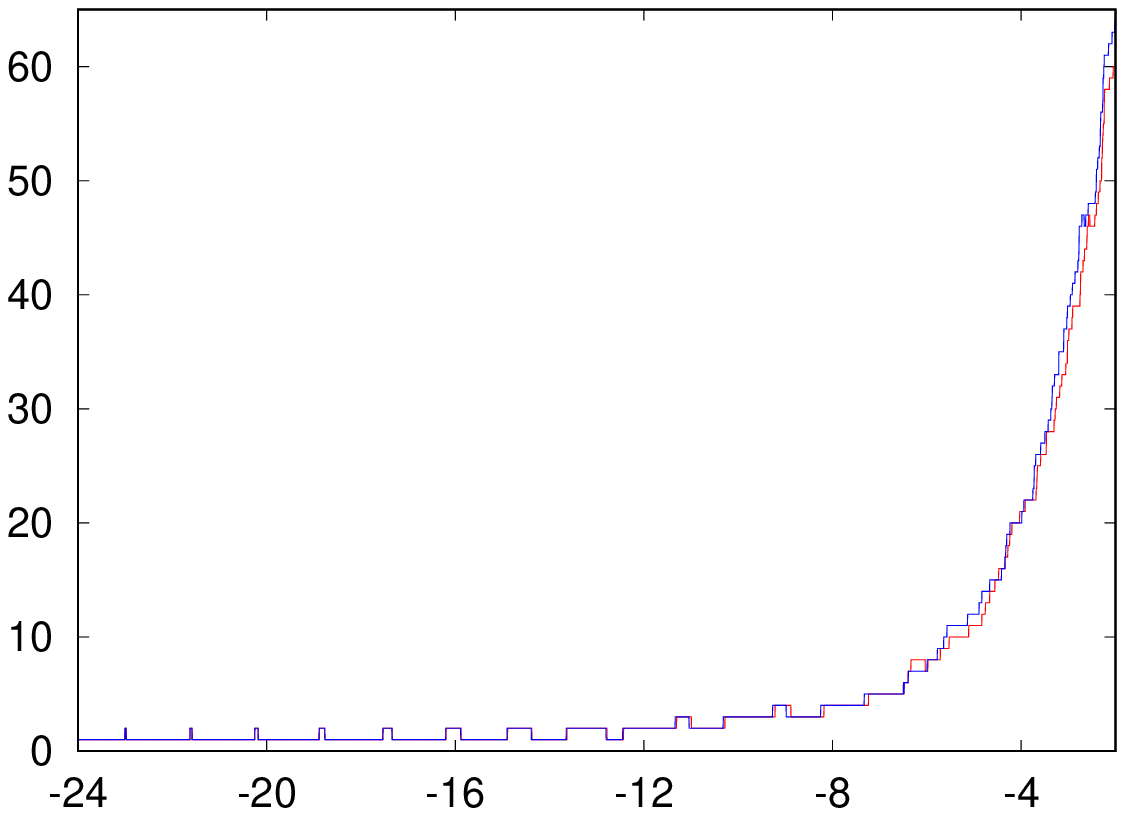,width=7cm} 
\end{center}
\caption{Left: We represent $\sqrt{\nu} \log(|\DFt{i}|/\epsilon)$, $i=1,2$.
Right: Number of harmonic terms considered to compute $\DFt{i}$.
In both panels $\log_2(\nu)$ ranges in the horizontal axis.
}
\label{nterms}
\end{figure}

\section{Asymptotic properties of the splitting behaviour}  \label{sect_PoincMel}

\subsection{The theoretical results}

Here we state the main theoretical results which are proven in the following subsections.

Consider system (\ref{perturbedsystem}) for $0 \leq \nu<\bar{\nu}\ll 1$ small enough,
with $\epsilon>0$, $c>1$, $d>\sqrt{2}$ and $\gamma\in \mathbb{R}\setminus\mathbb{Q}$.
From (\ref{exprdG1dG2_1}), we express $\DFt{i}$ as
\begin{equation} \label{dgiexp}
\DFt{i} =  \epsilon \sum_{m_1 \geq 0} \sum_{m_2 \in \mathbb{Z}} \hat{C}_{m_1,m_2}^{(i)} \sin(m_1 \psi_0 - m_2 \theta_0), \quad i=1,2,
\end{equation}
where $\hat{C}_{m_1,m_2}^{(i)} \in \mathbb{R}$.  We introduce the notation $C_{m_1,m_2}^{(i)}=|\hat{C}_{m_1,m_2}^{(i)}|$ to denote the amplitudes of the Fourier modes of $M_i(\psi_0,\theta_0)=\DFt{i}(\psi_0,\theta_0)/\epsilon$.

Given $m_1/m_2$, $m_j \in \mathbb{Z}\setminus \{0\}$, $j=1,2$, an approximant of
$\gamma$, let $c_{s,m_1/m_2} >0$ be the constant such that 
\[|s|=|m_1 - \gamma m_2| = \frac{1}{c_{s,m_1,m_2} m_1}.\] 
The constants $c_{s,m_1/m_2}$ are related to the arithmetic properties of $\gamma$ (see
Section~\ref{Sec:Otherfreq}). We shall denote the constant $c_{s,m_1/m_2}$ by
$c_{s,n}$ when $m_1/m_2$ is a best approximant of $\gamma$ (in the sense of the
continuous fraction expansions (CFE) of $\gamma$) and, in this case, $n$ refers
to the order of the best approximant. 

%We introduce $L=\nu c_{s,m_1/m_2} m_1^2$. 
The following result provides a quantitative description of the way the
different harmonics contribute to $\DFt{i}$.

\begin{theorem} \label{mainresult1}
There exists a universal function $\Psi_1(L)$ such that
$$
\Psi_1(L)_{\mid L= c_{s,m_1/m_2} \nu m_1^2} \approx \sqrt{c_{s,m_1/m_2} \nu} \log C_{m_1,m_2}^{(1)}, 
$$
asymptotically when $\nu \rightarrow 0$. The function $\Psi_1(L)$ only depends on $\gamma$ through the additive term 
$k/\gamma$, where $k=c+\sqrt{c^2-1}$.
On the other hand, the function
\[\Psi_2(L)=\Psi_1(L)- \frac{\sqrt{L} \log{L}}{m_1}\]
satisfies
$$
\Psi_2(L)_{\mid L= c_{s,m_1/m_2} \nu m_1^2} \approx \sqrt{c_{s,m_1/m_2} \nu} \log C_{m_1,m_2}^{(2)}, 
$$
asymptotically when $\nu \rightarrow 0$.
\end{theorem}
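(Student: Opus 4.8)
The plan is to reduce both assertions to a Laplace-type estimate of the single series in $k$ that defines each Fourier amplitude, and then to read the universal function off the location and value of the dominant term. Fix a harmonic $(m_1,m_2)$ with $m_1\geq1$, $m_2\neq0$. In (\ref{exprdG1dG2_1}) the mode $\sin(m_1\psi_0-m_2\theta_0)$ is produced only by the terms with $j=|m_2|$, $lj=-m_2$ and $k+5-2i=m_1$, i.e. $i=(k+5-m_1)/2$; this forces $k\geq\max(0,m_1-5)$ with $k\equiv m_1+1\pmod2$, and the frequency appearing in the integral is $s=m_1-\gamma m_2$. Since $I_1(s,\nu,n)>0$ and all combinatorial factors are positive, there is no cancellation and $C_{m_1,m_2}^{(1)}=c_{|m_2|}\sum_k 2^{(3+k)/2}d_k\,b_{4+k,(k+5-m_1)/2}\,I_1(s,\nu,k+5)$. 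Using $m+1-2i=m_1$ one gets $2^{(3+k)/2}d_k b_{4+k,(k+5-m_1)/2}=m_1\,d^{-k-1}2^{-(5+k)/2}\binom{k+5}{(k+5-m_1)/2}$, and the closed forms of Section~\ref{sect_expr_PoincMel} give $I_1(s,\nu,k+5)=\pi\,((k+4)!)^{-1}\nu^{-(k+5)}P_{k+4}(s,\nu)\,h(s\pi/2\nu)^{-1}$ with $h\in\{\cosh,\sinh\}$ according to the parity of $m_1$; moreover $P_{k+4}(s,\nu)=\nu^{k+4}P_{k+4}(\pm\sigma,1)$ with $\sigma=|s|/\nu$ and $\log P_{k+4}(\pm\sigma,1)=\sum_j\log(\sigma^2+(2j-1)^2)+O(\log\sigma)$, a sum of $\sim(k+5)/2$ terms.

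Positivity of the summands gives $\log C_{m_1,m_2}^{(1)}=\log c_{|m_2|}-\tfrac{|s|\pi}{2\nu}+\max_k\log T_k+O(\log m_1)$, where $T_k$ gathers the remaining factors and the $O(\log m_1)$ absorbs the Euler--Maclaurin error of the product, the $O(\log\sigma)$ and $\log2$ remainders, and the Laplace normalisation of the discrete sum. Applying Stirling to $(k+4)!$ and to $\binom{k+5}{(k+5-m_1)/2}$, replacing $\sum_j\log(\sigma^2+(2j-1)^2)$ by $\tfrac12\int_0^{k+5}\log(\sigma^2+x^2)\,dx$, and substituting $k+5=m_1u$ ($u\geq1$) and $\sigma=|s|/\nu=m_1/L$ (which follows from $L=c_{s,m_1/m_2}\nu m_1^2$ and $|s|=1/(c_{s,m_1/m_2}m_1)$), the only terms of order $m_1\log m_1$ — namely $-m_1u\log m_1$ from $-\log(k+4)!$ and $+m_1u\log m_1$ from $\tfrac12(k+5)\log(\sigma^2+(k+5)^2)$, the binomial being neutral at this order — cancel, while the $\log\nu$ pieces become negligible after division by $m_1$. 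What remains is $m_1^{-1}\log T_{m_1u}=g(u,L)+O(m_1^{-1}\log m_1)$ with
\[
g(u,L)=-u\log d-\tfrac u2\log2-\tfrac{u-1}2\log\tfrac{u-1}2-\tfrac{u+1}2\log\tfrac{u+1}2+\tfrac u2\log(L^{-2}+u^2)+L^{-1}\arctan(uL),
\]
a function of $u$ and $L$ only (and of the fixed $d$). Carrying out this cancellation uniformly in $(m_1,m_2,\nu)$, and in particular estimating $\prod_j(\sigma^2+(2j-1)^2)$ uniformly whether $\sigma$ is small or large compared with $k+5$ (i.e. $L$ small versus large, which is exactly where $g(\cdot,L)$ develops its nontrivial dependence on $L$), is the main obstacle.

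One checks $\partial_u g\to+\infty$ as $u\to1^+$ and $g(u,L)\to-\infty$ as $u\to\infty$ (the $\pm u\log u$ contributions cancel and the surviving slope in $u$ is $-\log(d/\sqrt2)<0$ since $d>\sqrt2$), so $\phi(L):=\max_{u\geq1}g(u,L)$ is attained at a finite interior point and is a genuine function of $L$. Multiplying through by $\sqrt{c_{s,m_1/m_2}\nu}=\sqrt L/m_1$: the hyperbolic factor contributes $-\tfrac{\sqrt L}{m_1}\cdot\tfrac{|s|\pi}{2\nu}=-\tfrac{\pi}{2\sqrt L}$, the saddle contributes $\sqrt L\,\phi(L)$, and, since $m_2=m_1/\gamma+O(1/m_1)$ gives $\log c_{|m_2|}=-(m_1/\gamma)\log k+O(1)$ with $k=c+\sqrt{c^2-1}$, the Fourier factor contributes $-\tfrac{\log k}{\gamma}\sqrt L+o(1)$. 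Hence $\sqrt{c_{s,m_1/m_2}\nu}\,\log C_{m_1,m_2}^{(1)}=\Psi_1(L)+o(1)$ uniformly as $\nu\to0$, with $\Psi_1(L):=\sqrt L\bigl(\phi(L)-\tfrac{\log k}{\gamma}\bigr)-\tfrac{\pi}{2\sqrt L}$, which is the first assertion; the dependence of $\Psi_1$ on $\gamma$ is confined to the displayed additive term in $\log k/\gamma$.

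Finally, comparing the two lines of (\ref{exprdG1dG2_1}) and using $I_2(s,\nu,k+6)=\tfrac{s}{\nu(k+5)}I_1(s,\nu,k+5)$ together with $a_{5+k,(k+5-m_1)/2}=\tfrac{5+k}{m_1}b_{4+k,(k+5-m_1)/2}$, the $k$-th term in the series for the $(m_1,m_2)$-coefficient of $\DFt{2}$ equals $-\tfrac{s}{\nu m_1}$ times the corresponding term for $\DFt{1}$, a ratio independent of $k$; as the summands keep a fixed sign this yields the exact identity $C_{m_1,m_2}^{(2)}=\tfrac{|s|}{\nu m_1}\,C_{m_1,m_2}^{(1)}$. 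Since $\tfrac{|s|}{\nu m_1}=1/L$, we have $\log C_{m_1,m_2}^{(2)}=\log C_{m_1,m_2}^{(1)}-\log L$, and multiplying by $\sqrt{c_{s,m_1/m_2}\nu}=\sqrt L/m_1$ gives $\sqrt{c_{s,m_1/m_2}\nu}\,\log C_{m_1,m_2}^{(2)}=\Psi_1(L)-\tfrac{\sqrt L\log L}{m_1}+o(1)=\Psi_2(L)+o(1)$, which is the second assertion.
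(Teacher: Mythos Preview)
Your proof is correct and follows essentially the same route as the paper's Sections~\ref{7p2}--\ref{7p3}: isolate the terms contributing to the $(m_1,m_2)$ Fourier mode, apply a Laplace-type estimate to the resulting $k$-series using Stirling and the integral approximation of $\log\Pi_k$, verify that all $m_1\log m_1$ contributions cancel so that the saddle value depends only on $L$, and obtain $\Psi_2$ from the exact termwise ratio $C^{(2)}_{m_1,m_2}/C^{(1)}_{m_1,m_2}=|s|/(\nu m_1)=1/L$. The only cosmetic difference is your parametrisation $u=(k+5)/m_1$ in place of the paper's $I=i/m_1$ (with $u=1+2I$) and your packaging of the factors into a single $g(u,L)$ rather than the paper's decomposition $\mathcal P_f\mathcal P_F\mathcal P_M\mathcal P_Q$.
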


The proof of Theorem~\ref{mainresult1} is given in Sections~\ref{7p2} and \ref{7p3}.	

If, from the arithmetic properties of $\gamma$, one can determine the
asymptotic behaviour of the constants $c_{s,m_1/m_2}$, assuming that they have
some defined asymptotic behaviour, then one can determine the dominant term (or
dominant terms) of the splitting. When $\nu \rightarrow 0$ the dominant terms
of the splitting are related to best approximants of $\gamma$. Our numerical
results and theoretical discussions support the following conjecture.

\begin{conjecture}\label{mainresult}
Let $(\nu_0,\nu_1)$, $\nu_0,\nu_1<\bar{\nu} \ll 1$, be an interval such that for all $\nu \in (\nu_0,\nu_1)$
the dominant harmonic in $\DFt{i}$ is the one associated to 
the best approximant $m_1/m_2$ of $\gamma$.
Then, for $\nu \in (\nu_0,\nu_1)$,
\[ |\DFt{i}|  \approx \epsilon \exp\left( \frac{\Psi_i(L)}{\sqrt{c_{s,m_1/m_2} \nu}} \right), \qquad  L = c_{s,m_1/m_2} \nu m_1^2, \qquad i=1,2, \]
where  $\Psi_2(L)=\Psi_1(L)+\mathcal{O}(\sqrt{c_{s,m_1/m_2}\nu})$, $\Psi_1(L) = \Psi_M + O(|L-L_M|^2)$, being 
$\Psi_M = \Psi_1(L_M)= \text{\em max}(\Psi_1(L)) \approx -4.860298$
and
$L_M \approx 0.26236$. 
\end{conjecture}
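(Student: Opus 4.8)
The plan is to deduce Conjecture~\ref{mainresult} from Theorem~\ref{mainresult1} together with a self-contained study of the universal function $\Psi_1$, and then to point to the step where a complete argument is still missing.

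\emph{From Theorem~\ref{mainresult1} to the estimate for $|\DFt{1}|$.} Writing $\DFt{i}=\epsilon\sum_{m_1\ge 0}\sum_{m_2\in\mathbb{Z}}\hat{C}^{(i)}_{m_1,m_2}\sin(m_1\psi_0-m_2\theta_0)$ as in (\ref{dgiexp}), the hypothesis that on $(\nu_0,\nu_1)$ the harmonic $m_1/m_2$ is dominant means exactly that $C^{(i)}_{m_1,m_2}$ is the largest Fourier amplitude and dominates the sum of all the others, so the amplitude $|\DFt{i}|$ (maximum over the torus) satisfies $|\DFt{i}|=\epsilon\,C^{(i)}_{m_1,m_2}(1+o(1))$. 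Taking logarithms, multiplying by $\sqrt{c_{s,m_1/m_2}\nu}$, and invoking Theorem~\ref{mainresult1} with $L=c_{s,m_1/m_2}\nu m_1^2$, one gets $\sqrt{c_{s,m_1/m_2}\nu}\,\log(|\DFt{i}|/\epsilon)=\Psi_i(L)+o(1)$, which is the first displayed estimate. The remaining claim $\Psi_2(L)=\Psi_1(L)+\mathcal{O}(\sqrt{c_{s,m_1/m_2}\nu})$ is then immediate from the identity $\Psi_2(L)=\Psi_1(L)-\sqrt{L}\log L/m_1$ of Theorem~\ref{mainresult1}, since $\sqrt{L}/m_1=\sqrt{c_{s,m_1/m_2}\nu}$ and, as explained below, $\log L$ stays bounded on the relevant range of $\nu$.

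\emph{The universal function $\Psi_1$.} Here I would work from the explicit form of $\Psi_1(L)$ produced in the proof of Theorem~\ref{mainresult1} in Sections~\ref{7p2}--\ref{7p3}. It is an elementary function of $L$ assembled from three ingredients: the geometric decay of the coefficients $d_k$ of $g$ and $c_j$ of $f$, which gives a term linear in $\sqrt{L}$ whose only dependence on $\gamma$ is the additive term singled out in the theorem; the Stirling asymptotics of $(n-1)!$ and of the prefactor $P_{n-1}(s,\nu)$ in $I_1(s,\nu,n)$ in the regime $n\sim m_1$, $|s|\gg\nu$, which gives $\sqrt{L}\log L$-type contributions together with bounded $\arctan$-corrections; and the core factor $1/\cosh(s\pi/2\nu)$, which gives $-\pi/(2\sqrt{L})$. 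From this form one checks that $\Psi_1$ is smooth on $(0,\infty)$, that $\Psi_1(L)\to-\infty$ both as $L\to 0^+$ (forced by $-\pi/(2\sqrt{L})$) and as $L\to+\infty$, and that $\Psi_1''<0$ near its critical point; hence $\Psi_1$ has a unique maximizer $L_M$, the root of the transcendental equation $\Psi_1'(L)=0$, which a numerical solve places at $L_M\approx 0.26236$ with $\Psi_M=\Psi_1(L_M)\approx -4.860298$. Taylor's theorem at $L_M$, using $\Psi_1\in C^2$ and $\Psi_1'(L_M)=0$, then gives $\Psi_1(L)=\Psi_M+\frac{1}{2}\Psi_1''(L_M)(L-L_M)^2+\mathcal{O}(|L-L_M|^3)=\Psi_M+\mathcal{O}(|L-L_M|^2)$.

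\emph{The relevant range of $L$, and the main obstacle.} One still needs that, as $\nu$ sweeps through the successive intervals where consecutive best approximants dominate, the corresponding values $L=c_{s,m_1/m_2}\nu m_1^2$ remain in a fixed neighbourhood of $L_M$: at a given $\nu$ the dominant best approximant is the one whose value $L$ is closest to $L_M$, and as $\nu$ decreases, $L$ drifts down through a bounded interval around $L_M$ before the handover to the next approximant takes place where the two amplitudes coincide; this is what bounds $\log L$ (as used above) and gives the $\mathcal{O}(|L-L_M|^2)$ term its meaning. The genuinely hard part --- and the reason the statement is a conjecture rather than a theorem --- is to make every ``$\approx$'' rigorous, splitting exactly along items 1--2 of the Introduction: (a) control the relative $\mathcal{O}(\epsilon^2)$ error of the Poincar\'e--Melnikov approximation, showing it decays exponentially as $\nu\to 0$ and never overtakes the exponentially small leading term; and (b) show that inside $\DFt{i}$ the series (\ref{exprdG1dG2_1}) is genuinely dominated by the single term with $k=m_1-5$, $i=0$, $j=|m_2|$, $s=m_1-|m_2|\gamma$, i.e. that the other triples producing the harmonic $(m_1,m_2)$, the subleading part of $P_{n-1}$, and the tail of the series are all of lower order. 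Along the way one must verify that the $\mathcal{O}(\log m_1)$ and $\mathcal{O}(1/m_1)$ remainders dropped in going from $\log C^{(i)}_{m_1,m_2}$ to $\Psi_i(L)/\sqrt{c_{s,m_1/m_2}\nu}$ indeed become negligible after multiplication by $\sqrt{c_{s,m_1/m_2}\nu}=\sqrt{L}/m_1$ --- they do, but only with uniformity in the joint limit $\nu\to 0$, $m_1\to\infty$ along $L\to L_M$ --- and supply the arithmetic input (the asymptotics of the constants $c_{s,m_1/m_2}$) that decides which $\nu$-interval carries which dominant harmonic. These estimates are not attempted here; the close agreement reported in Sections~\ref{sect_num} and \ref{Sec:ValMel} is the evidence underpinning the conjecture.
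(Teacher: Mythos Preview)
Your proposal is essentially correct and mirrors the paper's own treatment: the statement is labeled a \emph{conjecture}, and the paper does not prove it but rather supports it through Theorem~\ref{mainresult1}, the explicit construction of $\Psi_1$ in Sections~\ref{7p2}--\ref{7p3} (with the numerical values $L_M\approx 0.26236$, $\Psi_M\approx -4.860298$ read off from the explicit formula), and the numerical evidence of Sections~\ref{sect_num} and \ref{Sec:ValMel}. You correctly identify the two obstructions the paper itself flags (items 1--2 of the Introduction and the three questions in Section~\ref{Sec:non-dominant}), and your reduction from the dominance hypothesis plus Theorem~\ref{mainresult1} to the displayed asymptotic is exactly the heuristic the paper relies on.
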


\begin{remark}
Notice that $c_{s,m_1/m_2}$ depends on $\nu$ through the arithmetic properties
of $\gamma$, as was explained in Remark~\ref{remark_k1k2}. When $\gamma$ is a
quadratic irrational then the constants $c_{s,m_1/m_2}$ remain bounded as $\nu
\to 0$. On the other hand, when the quotients of the CFE of $\gamma$ are
unbounded then the maxima of the constants $c_{s,m_1/m_2}$ grow when $\nu
\to 0$. Actually the exponents in the exponentially small part of $|\DFt{i}|$, $i=1,2$, depend on $\nu$
through the behaviour of $\nu c_{s,m_1/m_2}$. See Section~\ref{Sec:Otherfreq} for some examples.  
\end{remark}

Explicit expressions of the functions $\Psi_i(L)$ are derived in the following
subsections. Note that $\Psi_1$ does not depend on the arithmetic properties of
$\gamma$ and $\Psi_2$ depends only on $m_1$. The dependence is through $L$ which depends on the
constant $c_{s,m_1/m_2}$ and the approximant $m_1/m_2$. This allows us to provide a
methodology to study the asymptotic behaviour for any frequency $\gamma$. 
Note that the dominant harmonic $(m_1,m_2)$ changes when $\nu \rightarrow 0$
and so does the constant $c_{s,m_1/m_2}$. This allows us to study the values of $\nu$ for
which a change of the dominant harmonic in $\DFt{i}$ is expected.

Conjecture~\ref{mainresult} asserts that the dominant term of the series
expansion of the Melnikov function gives the correct exponent of the splitting
behaviour, that is, that 
\[ \sqrt{\nu} \log\left(\frac{|\DFt{i}|}{\epsilon} \right) \approx \sqrt{\nu} \log C_{m_1,m_2}^{(i)} \approx \frac{1}{\sqrt{c_{s,m_1/m_2}}} \Psi_i(L), \qquad L = c_{s,m_1/m_2} \nu m_1^2,\]
where the second approximation comes from Theorem~\ref{mainresult1}.
See Section~\ref{Sec:non-dominant} for a more detailed discussion. Here we just want
to emphasize that this dominant term, which is related to the approximants of
$\gamma$, has a larger order of magnitude than the remaining terms of the
series. For example, for a constant type $\gamma$, if the dominant harmonic corresponds
to the linear combination $s=m_1-m_2\gamma$, then 
one expects $m_1,m_2 \sim 1/\sqrt{\nu}$ (see Remark~\ref{remark_k1k2}). This gives a term of order
$\exp(-c/\sqrt{\nu})$ much larger than the order $\exp(-c/\nu)$ expected for
the terms with other combinations. We stress that this is a purely quasi-periodic effect
related to the existence of two frequencies in the system. Indeed, if one
considers a one frequency forcing of the 1-dimensional separatrix dynamics, the effect of any of the terms of the Melnikov
series can be of the same (or similar) order than the dominant one. Hence all terms can contribute to change the
dominant exponent, see Appendix~\ref{exempleduffing}.

\subsection{The amplitude of the harmonics associated to approximants of $\gamma$} \label{7p2}

We consider first $M_1(\psi_0,\theta_0)=\DFt{1}/\epsilon$. Given $m_1, \ m_2
\in \mathbb{Z}$ we look for the expression of $C_{m_1,m_2}^{(1)}=|\hat{C}_{m_1,m_2}^{(1)}|$ in
(\ref{dgiexp}). In (\ref{exprdG1dG2_1}) we choose $l=-1$ to get the more
relevant terms, then one has $k=m_1 + 2 i -5$ and $j=m_2$ so that
\begin{align*}
C_{m_1,m_2}^{(1)} & = c_{m_2} \sum_{i \geq 0} 2^{(m_1+2i-2)/2} d_{m_1+2i-5} \, b_{m_1+2i-1,i} \, I_1(s,\nu,k+5)  \\
& \approx  c_{m_2} \sum_{i \geq 0} 2^{(m_1+2i-2)/2} d_{m_1+2i-5} \, b_{m_1+2i-1,i} \, \frac{2 \pi}{\nu} \frac{(s/\nu)^{m_1+2i-1}}{(m_1+2i-1)!} \, \Pi_{m_1+2i}(\nu/s) \exp\left(-\frac{\pi s}{2 \nu}\right),
\end{align*}
where $s=|m_1-m_2 \gamma|$ and $\Pi_{r}(\nu/s)=s^{1-r} P_{r-1}(s,\nu)$, $r\geq 1$. 
Note that to lighten the notation we have used $s$ for $|s|$ in this section,
hoping that no confusion will be produced.

From (\ref{recurP}) one has the recurrence
\begin{equation} \label{recurPi}
\Pi_1=\Pi_2=1, \qquad \Pi_r(w)=(1+(r-2)^2 w^2) \Pi_{r-2}(w), \ r \geq 2, 
\end{equation}
where $w = \nu/s$. In the expression for $C_{m_1,m_2}^{(1)}$ above (and the one for
$C_{m_1,m_2}^{(2)}$ at the end of this section) we have used the approximations
$\cosh^{-1}(\pi s/ (2 \nu)), \sinh^{-1}(\pi s/ (2 \nu)) \sim 2 \exp(-\pi s
/(2\nu))$, valid when $w^{-1}=s/\nu$ is large enough, the relative error being
$\mathcal{O}( \exp(-\pi s /\nu ))$.

From (\ref{expcs}), (\ref{expds}) and (\ref{ccs}) it follows that
$$
c_{m_2} \! =\! \frac{2}{ \rho_c^{m_2} \sqrt{c^2 -1}}, \ d_{m_1+2i-5} \! = \! \frac{m_1+2i}{d^{m_1+2i-4}}, \ b_{m_1+2i-1,i} \! = \! \frac{m_1}{2^{m_1+2i-1}(m_1+2i)} \binom{m_1+2i}{i},
$$
where $\rho_c = c+\sqrt{c^2-1}$. Then,
\begin{equation} \label{dG1}
C_{m_1,m_2}^{(1)} \approx= \frac{2}{  \rho_c^{m_2} \sqrt{c^2 -1}} \frac{2 \pi}{\nu} d^3 \exp\left(-\frac{\pi s}{2\nu}\right) m_1 2^{-m_1/2} \left(\frac{s}{\nu d}\right)^{m_1-1} \frac{d}{m_1!} \, S_A, 
\end{equation}
where $S_A$ denotes the sum
\begin{equation} \label{sumAi}
S_A= \sum_{i \geq 0} A_i, \qquad 
A_i=A_i(m_1,\nu,s)= \frac{m_1+2i}{2^i (m_1+i)! i!} \left(\frac{s}{\nu d}\right)^{2i} \frac{m_1!}{d} \, \Pi_{m_1+2i}( \nu/s ).
\end{equation}

Similarly, for $\DFt{2}$ one obtains, given $m_1,m_2$, that
\begin{equation} \label{dG2}
C_{m_1,m_2}^{(2)} \approx \frac{2}{\rho_c^{m_2} \sqrt{c^2 -1}} \frac{2 \pi s}{\nu^2} d^3 \exp\left(-\frac{\pi s}{2\nu}\right) 2^{-m_1/2} \left(\frac{s}{\nu d}\right)^{m_1-1} \frac{d}{m_1!} \, S_A.
\end{equation}

According to (\ref{dG1})-(\ref{dG2}) the contribution of the integral to $\DFt{i}$, $i=1,2$,
related to the approximant $m_1/m_2$ is $\mathcal{O}\left(\exp(-\frac{\pi s}{2\nu})\right)$, where the
contribution of finite negative powers of $\nu$ has been neglected. Hence those harmonics
associated to the smallest values of $s$ play the most important role. These
are expected to be related (asymptotically as $\nu \rightarrow 0$) with the
best approximants of $\gamma$.

From the expressions of $C_{m_1,m_2}^{(i)}$, $i=1,2$, we have the following result.
\begin{proposicio} \label{propquotient}
Given $m_1\geq 0$ and $m_2 \in \mathbb{Z}$ we have
\[ C_{m_1,m_2}^{(1)} = L \ C_{m_1,m_2}^{(2)} + \mathcal{O}(\exp(-\pi s / \nu)),\]
where $L=\nu m_1^2 c_{s,m_1/m_2}$ and $c_{s,m_1/m_2}>0$ is such that $s=|m_1-\gamma m_2|= \frac{1}{c_{s,m_1/m_2} m_1}$.
\end{proposicio}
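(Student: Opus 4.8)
The statement is really just an algebraic bookkeeping identity relating the two expressions for $C_{m_1,m_2}^{(1)}$ and $C_{m_1,m_2}^{(2)}$ obtained in (\ref{dG1}) and (\ref{dG2}), so the plan is to divide one by the other and identify the proportionality factor. First I would take the approximate expressions (\ref{dG1}) and (\ref{dG2}); both contain exactly the same factors
$$
\frac{2}{\rho_c^{m_2}\sqrt{c^2-1}}\,d^3\,\exp\!\left(-\frac{\pi s}{2\nu}\right)\,2^{-m_1/2}\left(\frac{s}{\nu d}\right)^{m_1-1}\frac{d}{m_1!}\,S_A,
$$
with the only difference being the prefactor $\frac{2\pi}{\nu}\,m_1$ in $C_{m_1,m_2}^{(1)}$ versus $\frac{2\pi s}{\nu^2}$ in $C_{m_1,m_2}^{(2)}$. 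Hence, up to the relative error already inherent in the approximations, $C_{m_1,m_2}^{(1)} = \dfrac{m_1 \nu}{s}\, C_{m_1,m_2}^{(2)}$.

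Next I would rewrite the ratio $m_1\nu/s$ in terms of the constant $c_{s,m_1/m_2}$. By its definition $s = |m_1-\gamma m_2| = \dfrac{1}{c_{s,m_1/m_2}\,m_1}$, so $\dfrac{1}{s} = c_{s,m_1/m_2}\,m_1$ and therefore $\dfrac{m_1\nu}{s} = \nu m_1^2 c_{s,m_1/m_2} = L$. This gives $C_{m_1,m_2}^{(1)} = L\,C_{m_1,m_2}^{(2)}$ at the level of the leading approximations, which is exactly the claim modulo the error term.

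Finally I would track the error. The approximations used in deriving (\ref{dG1}) and (\ref{dG2}) replace $\cosh^{-1}(\pi s/(2\nu))$ and $\sinh^{-1}(\pi s/(2\nu))$ by $2\exp(-\pi s/(2\nu))$, with relative error $\mathcal{O}(\exp(-\pi s/\nu))$ as noted in the text right after (\ref{recurPi}); the same replacement and the same relative error size are used in both $C_{m_1,m_2}^{(1)}$ and $C_{m_1,m_2}^{(2)}$, and all the remaining manipulations (collecting $S_A$, extracting the $\Pi_r$ factors via (\ref{recurPi})) are exact. Since each $C_{m_1,m_2}^{(i)}$ is of size $\mathcal{O}(\exp(-\pi s/(2\nu)))$ times algebraic factors in $\nu$, a relative error of $\mathcal{O}(\exp(-\pi s/\nu))$ translates into an absolute error $\mathcal{O}(\exp(-3\pi s/(2\nu)))=\mathcal{O}(\exp(-\pi s/\nu))$, which is absorbed into the stated $\mathcal{O}(\exp(-\pi s/\nu))$ remainder.

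The only mildly delicate point — and the step I would expect to require the most care rather than being automatic — is checking that the error terms coming from the $\cosh/\sinh$ approximation enter $C_{m_1,m_2}^{(1)}$ and $C_{m_1,m_2}^{(2)}$ in a way compatible with the clean identity, i.e. that after multiplying by $L$ the two error contributions are of the claimed common order $\mathcal{O}(\exp(-\pi s/\nu))$ uniformly; this uses that $L$ and all the algebraic prefactors are, for the relevant range of $(m_1,m_2,\nu)$, subexponential in $1/\nu$, so they do not spoil the exponential gain. Apart from that the proof is a direct comparison of the two displayed formulas.
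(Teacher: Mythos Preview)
Your proposal is correct and follows exactly the paper's approach: the paper's proof is a single line that divides (\ref{dG1}) by (\ref{dG2}) to obtain $C_{m_1,m_2}^{(1)}/C_{m_1,m_2}^{(2)} \approx m_1\nu/s = m_1^2 c_{s,m_1/m_2}\nu = L$, which is precisely what you do. Your additional discussion of the error term is more detailed than the paper's, which simply relies on the approximation error already noted after (\ref{recurPi}).
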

\begin{proof}
From (\ref{dG1}) and (\ref{dG2}) one has $C_{m_1,m_2}^{(1)} / C_{m_1,m_2}^{(2)}
\approx  m_1 \nu/s = m_1^2 c_{s,m_1/m_2} \nu = L$ for all $m_1 \geq 0$ and $m_2 \in
\mathbb{Z}$.
\end{proof}

This relation explains the difference between $\DF{1}$ and $\DF{2}$ in
Fig.~\ref{ampli2}, see also Fig.~\ref{spf1f2} first row. Recall that it was
numerically observed that the dominant harmonic of both splittings coincide for
large ranges of $\log_2(\nu)$ when $\nu \to 0$ (see Table~\ref{taulanubif}).

\subsection{The universal function $\Psi_1(L)$ associated to an approximant $m_1/m_2$ of $\gamma$.} \label{7p3}

In this section we consider $m_1/m_2 \approx \gamma$ an approximant (not
necessarily a best approximant of $\gamma$). For concreteness we will focus on
$\DFt{1}$, the expression of $\Psi_2(L)$ will follow directly from
Proposition~\ref{propquotient}.

Given $m_1/m_2 \approx \gamma$, we express $C_{m_1,m_2}^{(1)}$ given by (\ref{dG1}) as
$$
C_{m_1,m_2}^{(1)} =\cP_f \cP_F S_A, \qquad \text{and } S_A=\cP_M \cP_Q,
$$
where
\[\cP_f = \frac{4 \pi d^4 m_1}{s \sqrt{c^2 -1}}, \qquad 
\cP_F = \frac{1}{\rho_c^{m_2} 2^{m_1/2} m_1!} \exp\left( - \frac{\pi s}{2 \nu} \right) \left( \frac{s}{\nu d}\right)^{m_1},\]
and $\cP_M$ denotes the dominant term of $S_A$ (i.e. the term of $S_A$ which
gives the maximum contribution to the sum) and $\cP_Q=S_A/\cP_M$.

To get intuition about how to proceed we perform some numerical investigations
considering (temporary!) $\gamma=(\sqrt{5}-1)/2$. Fig.~\ref{cdh} left shows the
behaviour of $\DFt{1}$. We see different changes of dominant harmonic as
$\nu \rightarrow 0$ that are marked with points. We represent the behaviour of
$S_A=\cP_M \cP_Q$ for $\gamma= (\sqrt{5}-1)/2$ in Fig.~\ref{cdh}
right. These two terms play the role of a factor which ranges in a finite
interval away from zero. In particular, this means that the change of harmonic should be
detected in the prefactor $\cP_f \cP_F$. The important term is $\cP_F$ since
$\cP_f$ does not depend on $\nu$ explicitly and, for the $m_1/m_2$ approximant
giving the maximum contribution to the splitting function for a fixed $\nu$,
behaves as a power of $\nu$, hence negligible in front the exponentially small
term in $\nu$ of $\cP_F$. Hence, below, we first look for the changes using
just $\cP_F$, later we will discuss the contribution of the sum $S_A$. The
factor $\cP_F$ depends on $m_1$, $m_2$ and $\nu$. We shall check later that
$\cP_Q$ gives no relevant contribution to $C_{m_1,m_2}^{(1)}$.

\begin{figure}[ht]
\begin{center}
\begin{tabular}{cc}
\hspace{-6mm}\epsfig{file=./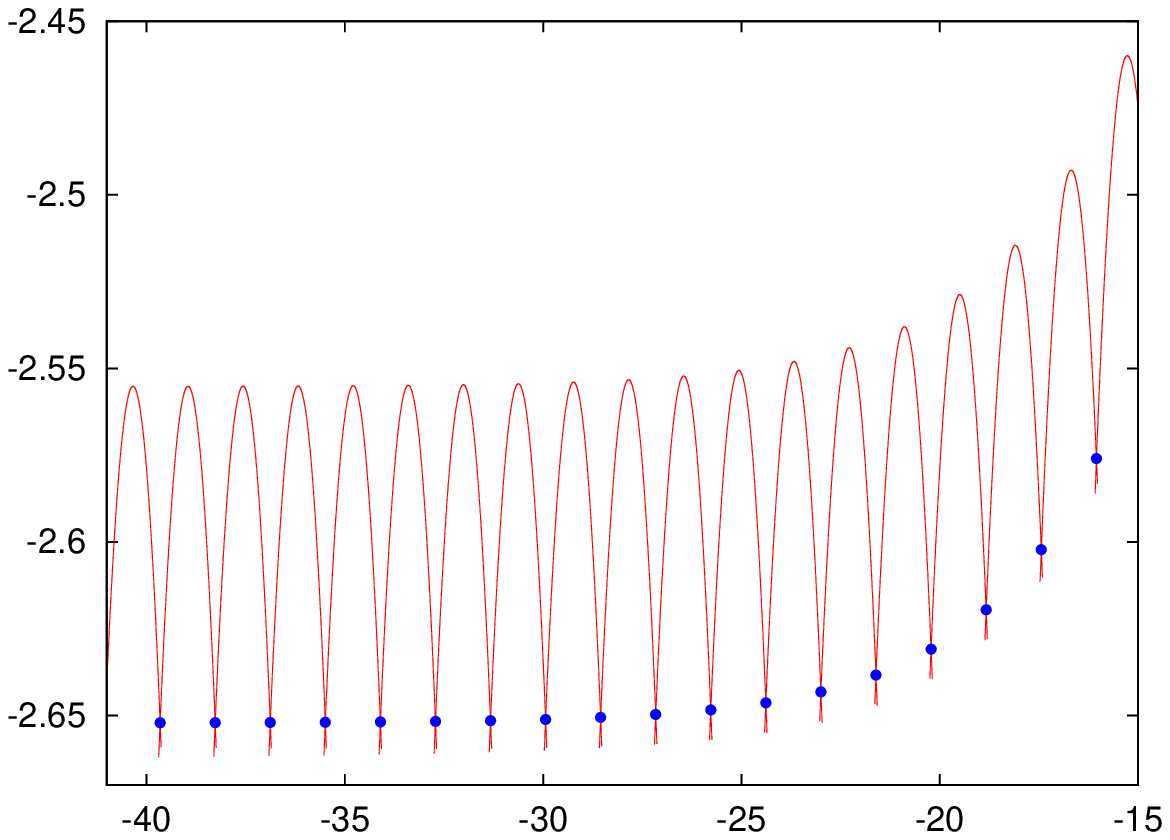,width=9cm} & 
\hspace{-9mm}\epsfig{file=./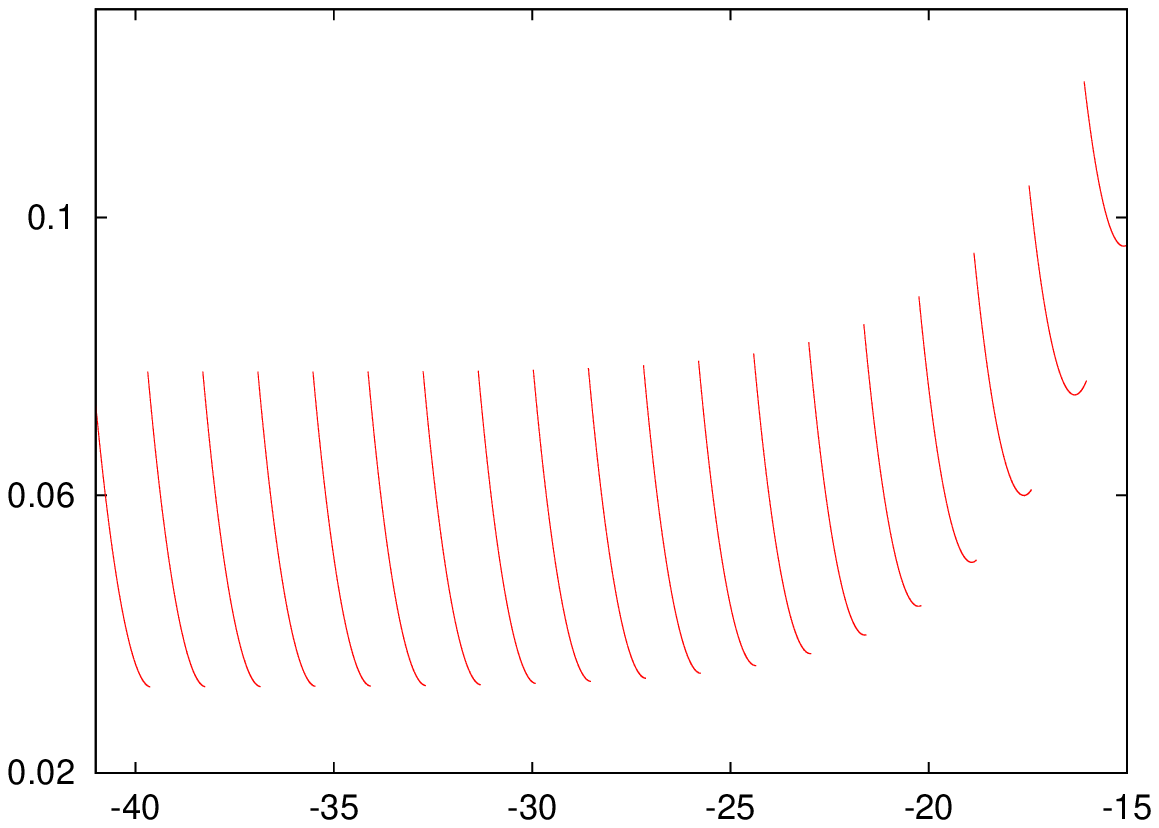,width=9cm} 
\end{tabular}
\end{center}
\vspace{-8mm}
\caption{We consider $\gamma=(\sqrt{5}-1)/2$ and $\epsilon=10^{-3}$. In both 
plots the horizontal variable is $\log_2(\nu)$. Left: $\sqrt{\nu} \log(
C_{m_1,m_2}^{(1)}/\epsilon)$, the points correspond to the changes of dominant
harmonic. The rightmost change corresponds to $m_1=55\to m_1=89$, while the
leftmost to $m_1=196418\to m_1=317811$. Right: $\sqrt{\nu} \log(\cP_M\cP_Q)$.}
\label{cdh}
\end{figure}

In what follows, given $m_1/m_2 \approx \gamma$, we study the contribution of
the different factors to $C_{m_1,m_2}^{(1)}$.

{\bf The contribution of $\cP_F$.}
We write $\cP_F=\cP_F(m_1)$ to explicitly note its dependence on $m_1$. Using
Stirling's formula we approximate $\log m_1! \approx m_1 (\log m_1-1)$ (i.e. we ignore the term $\sqrt{2\pi m_1}$),
one has
\begin{align} \label{cPF}
\log(\cP_F(m_1)) \! \approx \!  & - m_1 \left( \frac{\log(\rho_c)}{\gamma} + \frac{\log 2}{2} + (\log m_1 -1) + \frac{\pi}{2 c_{s,m_1/m_2} \nu m_1^2} +\log(dc_{s,m_1/m_2} \nu m_1) \right)  \nonumber \\
= & - m_1(K+\log(L) +B/L),
\end{align}
where
\[ K=\log(d)+\log(2)/2+\log(\rho_c)/\gamma-1,\quad L=c_{s,m_1/m_2} \nu m_1^2,  \quad B=\pi/2.\] 

{\bf The contribution of $\cP_M$.} To take into account the effect of the factor  $\cP_M$ we need to identify the
dominant term of $S_A$.  From (\ref{sumAi}) and (\ref{recurPi}) it
follows that the quotient of two consecutive terms in the sum $S_A$
is \begin{equation} \label{quotientA} \frac{A_i}{A_{i-1}}=
\frac{m_1+2i}{2(m_1+2i-2)(m_1+i)i} \left( \frac{s}{\nu d} \right)^2
\left(1+(m_1+2i-2)^2 (\nu/s)^2 \right).  \end{equation}

We look for the index $i$ corresponding to the term with maximum value of the
sum $S_A$ for a fixed value of $\nu$. It is useful to introduce $I=i/m_1$ and
look for the index $I$ instead.  From
(\ref{quotientA}), one gets
\begin{equation} \label{quotientAA}
 \frac{A_i}{A_{i-1}}=\frac{1}{2d^2} \left( \frac{1}{I+I^2} \left(
\frac{s}{m_1 \nu} \right)^2 + \frac{1}{I+I^2}  + 4 \right) \left(1 +
\mathcal{O}(m_1^{-1}) \right). 
\end{equation}

From this quotient one deduces that the sequence $\{A_i\}_i$ is increasing for
small values of $i$ and it becomes decreasing for large values of
$i$ provided $d>\sqrt{2}$ (recall that we choose $d=7$ in the concrete example).  The
maximum value is achieved when $A_i \approx A_{i-1}$. 
Then, ignoring the terms of relative value $\mathcal{O}(m_1^{-1})$ one gets the
following equation 
\begin{equation} \label{eqI}
(2d^2-4) (I^2+I)=1+\left( \frac{s}{\nu m_1} \right)^2,
\end{equation}
from which one can determine the index $i=m_1 I$ of the maximum term of
$S_A$. Hence, taking into
account the expression (\ref{sumAi}), the factor $\cP_M$ is
\begin{equation} \label{cPMdetall}
\cP_M = \frac{m_1! k}{d (\sqrt{2} w d)^{2 m_1 I}  (m_1(1+I))! (m_1 I)!} \, \Pi_k(w), 
\end{equation}
where $k=m_1(1+2I)$ and $w=\nu/s$. From the recurrence relation (\ref{recurPi}) one gets
\[ \log(\Pi_k(w))=\!\!\!\!\sum_{j=k-2(-2)0}\!\!\!\!\log(1+j^2w^2),\]
where the index $j$ runs with step $-2$ (and finishes at $j=1$ whenever $k$ is odd).
Approximating the previous sum by an integral one has
\begin{align}
\log(\Pi_k(w))  \approx & \frac{1}{2} \int_0^{k-1}\hspace{-6mm}\log(1+j^2w^2) \, dj= \frac{1}{4w}\int_0^{(k-1)^2w^2}\hspace{-12mm} \log(1+z)\frac{dz}{\sqrt{z}} \nonumber\\
 = & \frac{1}{2w}\left[\sqrt{z}(\log(1\!+\!z)\!-\!2)\!+\!2\arctan(\sqrt{z}) \right]_0^{(k-1)^2w^2}\nonumber \\
 = & \frac{k-1}{2}\left(\log(1\!+\!(k\!-\!1)^2w^2)\!-\!2\right)+\frac{\arctan((k\!-\!1)w)}{w}.
\end{align}

{\bf The definition of the universal function $\Psi_1(L)$.} 
We define now the universal function $\Psi_1(L)$ from the previous
contributions of $\cP_F$ and $\cP_M$. We will check below that the contribution
of $\cP_Q$ is not important in the sense that $\DFt{1} \approx \cP_F \cP_M$
is accurate enough to detect the changes of dominant harmonics.

%Recall that
%$\sqrt{\nu}=\sqrt{L}/(m_1 \sqrt{c_{s,m_1/m_2}})$. We define
%\begin{equation}\label{funPsi}
%\Psi_1(L) := \left( \frac{\log(\cP_F)}{m_1} + \frac{\log(\cP_M)}{m_1} \right) \sqrt{L}
%\end{equation}
%so that $\sqrt{c_{s,m_1/m_2} \nu} \log C_{m_1,m_2}^{(1)}  \approx \Psi_1(L)$. Notice that
%the function $\Psi_1(L)$ depends on the constants $c$ and $d$ and on the
%rational number $m_1/m_2$, but not on the frequency $\gamma$.

First, we recall from (\ref{cPF}) that $\log(\cP_F)/m_1 \approx -(K+\log(L)+B/L)$ with $K$ and $B$ independent of $L$.
Let us denote by $\Psi_{1,1}(L)=-(K+\log(L)+B/L)$, and note that it slightly depends on $\gamma$ through $K$.
Next, we obtain an approximation of $\log(\cP_M)/m_1$ that only depends on $L=c_{s,m_1/m_2} \nu m_1^2$. Equation (\ref{eqI}) can be rewritten as
$(2d^2-4)(I^2+I)-1=1/L^2$, so that given $L$ we can obtain the index $I=I_*$ that
determines $\cP_M$. From (\ref{cPMdetall}), after skipping some constant terms
and higher order terms in $m_1^{-1}$, one gets
\begin{align} \label{cPM}
\log(\cP_M)/m_1 \approx &  -2I_*\log(L d)-(1+I_*)\log(1+I_*)-I_*\log(I_*)+I_*(2-\log(2)) \\
 & +  (1+2I_*)(\log(1+((1+2I_*)L)^2)-2)/2+\arctan((1+2I_*)L)/L, \nonumber
\end{align}
where the terms of the first line come from the prefactor of $\Pi_{k}(w)$ in
(\ref{cPMdetall}) and the terms of the second one are related to $\Pi_{k}(w)$
after taking logarithms. Let us denote by $\Psi_{1,2}(L)$ the right hand side of (\ref{cPM}).

Now we define 
\begin{equation}\label{funPsi}
\Psi_1(L) := \Psi_{1,1}(L) + \Psi_{1,2}(L), 
\end{equation}
which depends on the parameters $c$ and $d$ (and slightly on $\gamma$ through $K$) but does
not depend explicitly on the approximant $m_1/m_2$ of $\gamma$.
The universal function $\Psi_1(L)$ provides an approximation of $\sqrt{c_{s,m_1/m_2} \nu}
\log(|\DFt{1}|/\epsilon)$ as a function of the parameter $L= c_{s,m_1/m_2} \nu m_1^2$. In
Fig.~\ref{psi_ncs} we show the function $\Psi_1(L)$ as a function of $L$.  We
can see that it has the properties described in Conjecture~\ref{mainresult}.

\begin{figure}
\begin{center}
\epsfig{file=./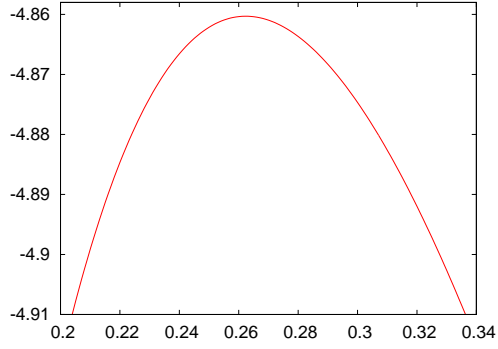,width=7cm}
\caption{The universal function $\Psi_1(L)$ as a function of $L$.}
\label{psi_ncs}
\end{center}
\end{figure}

{\bf The factor $\cP_Q$ plays no role.}
Here we check that $\cP_Q$ becomes not relevant as $\nu \rightarrow 0$ or,
equivalently, as $m_1 \rightarrow \infty$.  Ignoring the terms
$\mathcal{O}(m_1^{-1})$ in (\ref{quotientAA}) we obtain
\[\frac{A_i}{A_{i-1}}=\frac{1}{2d^2(I+I^2)}((1+2I)^2+L^{-2}). \] This
quotient depends on $L$ and $I$. For a fixed value of $L$ the quotient $A_i/A_{i-1}$ is
a monotonically decreasing function of $I$ (and hence of $i$) independently of
the value of $L$. Concretely one has
\[
\frac{\partial(A_i/A_{i-1})}{\partial I}=-\frac{(1+2I)(1+L^{-2})}
{2d^2(I+I^2)^2}.
\]

Recall that $I_*$ is the value of $I$ giving the quotient $A_i/A_{i-1}$
closest to one (i.e. $I_*$ corresponds to the maximum term of the sum
$S_{A}$ and, by definition, it determines the factor $\cP_M$). For $\delta>0$ fixed, let $I_{\pm}$
the values of $I$ for which one has $A_i/A_{i-1} = 1 \pm \delta$.
One has 
\[I_{\pm} = I_* \pm \partial{I_\pm}/\partial{\delta}|_{\delta=0} \, \delta + \mathcal{O}(\delta^2),\]
and one checks that
\[ \partial{I_{\pm}}/\partial{\delta}|_{\delta=0} = \frac{2 d^2 (I_* + I_*^2)}{(2d^2-4)(1+2I_*)} = \mathcal{O}(1), \]
meaning that $|I_+ - I_-|= \mathcal{O}(\delta)$. Since $I_\pm= m_1 i_\pm$, it follows
that $|i_+ -i_-|= \mathcal{O}(m_1 \delta)$.

We split the sum $S_{A} = \sum_{i \geq 0} A_i$ into three (say left/center/right) parts
\[S_{A}= \sum_{i=0}^{i_-} A_i + \sum_{i=i_-}^{i_+} A_i + \sum_{i=i_+}^{\infty} A_i =S_l+S_c+S_r.\]

We recall that $S_{A}= \cP_M \cP_Q$, where $\cP_M = A_{i_*}$, where $i_*=m_1 I_*$, hence
\[\cP_Q =  \frac{1}{A_{i_*}} \left( S_l + S_c + S_r \right) .\]
From  $|i_+ -i_-|= \mathcal{O}(m_1 \delta)$, it follows that $S_c=
\mathcal{O}(m_1 \delta) A_{i_*}$.  On the other hand, the terms in $S_l$ decay
as $A_i \leq (1+\delta) A_{i-1}$. Hence, $S_l \leq A_{i_*} \sum_{i=0}^{i_-} (1 + \delta)^{-i}  = \mathcal{O}( A_{i_*}/ \delta) $.
Similarly, for $S_r$ one has $A_i \leq (1- \delta) A_{i-1}$, hence $S_r = \mathcal{O}(A_{i_*}/\delta)$.
As a conclusion, one gets 
\[ \cP_Q = \mathcal{O}(m_1 \delta) + \mathcal{O}(\delta^{-1}). \]
Taking, for example, $\delta = m_1^{-1/2}$ one gets $\cP_Q = \mathcal{O}(m_1^{1/2})$, meaning
that the factor $\cP_Q$ can be ignored compared with the exponentially small terms since
its logarithm divided by $m_1$ is small compared with the other terms in $\Psi_1$.

{\bf The analogous function $\Psi_2(L)$.} For a fixed $m_1/m_2 \in \mathbb{Q}$ 
we define the function $\Psi_2(L)$ as 
\begin{equation} \label{G1G2rel}
 \Psi_2(L) = \Psi_1(L)- \frac{\sqrt{L}}{m_1} \log(L). 
\end{equation}
From Proposition~\ref{propquotient} one has that 
\[\Psi_2(L) \approx \sqrt{c_{s,m_1/m_2} \nu} \log( C_{m_1,m_2}^{(2)} ).\]

Assume that we are interested in the functions $\Psi_1(L)$ and $\Psi_2(L)$ for
values of $L \in [L_-,L_+]$ around their maxima. Then, the relation
(\ref{G1G2rel}) shows that $\Psi_2(L)$ tends to $\Psi_1(L)$ as
$\nu \rightarrow 0$, uniformly in $[L_-,L_+]$.

\subsection{The changes in the dominant harmonic of the splitting function} \label{sectchanges}

Several properties can be analysed from the derived universal functions
$\Psi_1$ and $\Psi_2$. 

First we look for the changes of the dominant harmonic in $\DFt{1}$ as $\nu$
varies. We expect that for most of the values of $\nu$ there is one dominant harmonic.
However, for some values of $\nu$ different harmonics can be of the same
order of magnitude. Our aim is to determine, for a given $\nu$ small enough,
which is (are) the dominant harmonic(s).  

Some general comments are in order.  As already said and according to
(\ref{dG1}) (resp. (\ref{dG2})), for $\nu$ small enough one expects the
dominant harmonic(s) of $\DFt{1}$ (resp. $\DFt{2}$) to be related with
the best approximants of $\gamma$.  That is, to get the dominant harmonic it is
enough to compare the harmonics associated to best approximants $m_1/m_2$ of
$\gamma$. Below we will restrict to best approximants and we will compare the
functions $\Psi_1$ associated to them. However, not all the harmonics
associated to best approximants become a dominant harmonic. Several examples
will be given in Section~\ref{Sec:Otherfreq}.
Finally, we note that, assuming that the amplitudes of the harmonics of the
Poincar\'e-Melnikov integral decay in an exponential way as in Remark~\ref{remark_k1k2},
at least one of every two consecutive best approximants of $\gamma$ becomes the
dominant harmonic of $\DFt{i}$, $i=1,2$, for a suitable range of $\nu$. In
Appendix~\ref{conseq_best} we consider that problem assuming two small consecutive quotients
between two large quotients of the CFE of $\gamma$. For a more general discussion see \cite{FonSimVie-2}.  

To determine which of the best approximants is associated to the dominant
harmonic requires to know the constants $c_{s,m_1/m_2}$ to be able to compare the
corresponding functions $\Psi_1$. If moreover one wants to look for the
asymptotic behaviour of the changes of dominant harmonic as $\nu \rightarrow 0$
one needs an asymptotic description of the values of $c_{s,m_1/m_2}$.
Next subsections deal with this question.

\subsubsection{The golden mean frequency.}  \label{goldenfreq}

For simplicity, first we consider $\gamma$ to be a quadratic irrational so that
its CFE is periodic. We shall prove in Lemma~\ref{lema_periodic_csn} that, in
this case, the values of the constants $c_{s,m_1/m_2}$ associated to the best
approximants of $\gamma$  are (asymptotically, as the order of the best
approximant tends to infinity) also periodic.  Moreover, for concreteness, we
focus on $\gamma = (\sqrt{5}-1)/2$ but other quadratic irrational numbers can
be similarly handled. 

As we shall discuss in Section~\ref{Sec:periodicity_csn}, for
$\gamma=(\sqrt{5}-1)/2$, one has $c_{s,n} \rightarrow \sqrt{5}(1+\gamma) =3+\gamma$
when considering best approximants of $\gamma$ and as the order of the best
approximant tends to infinity. The best approximants are quotients of
consecutive Fibonacci numbers. It turns out that all best approximants are
visible as a dominant harmonic in a corresponding interval of $\nu$. We look for
the sequence of values $\nu_j$ of $\nu$ for which the changes of dominant harmonic
take place, see Fig.~\ref{cdh} left.  Assume that the $j$-th best approximant
of $\gamma$ dominates at a specific value of $\nu=\nu_1^*$.
We first use the approximation $\DFt{1} \approx \epsilon \cP_F(m_1)$ where $m_1$ is the
numerator of the $j$-th best approximant. Assume that for $\nu=\nu_0^* < \nu_1^*$ the dominant
harmonic corresponds to the $(j+1)$-th best approximant of $\gamma$. Then there is a value $\nu=\nu_j$, corresponding to the change $m_1\to(1+\gamma)m_1$ of dominant harmonic, for which  
$\log(\cP_F(m_1)) = \log(\cP_F((1+\gamma)m_1))$.
This condition leads to the following equation for $L$
\[ L=\frac{\pi\gamma/(2(1+\gamma))}{2(1+\gamma)\log(1+\gamma)+K\gamma+\gamma
\log(L)}.\]
This equation, which is independent of $m_1$, can be solved by numerical
iteration and one obtains $L=L_l\approx 0.1690224$ for the values $c=5, d=7$ in our perturbation. This implies that asymptotically 
$\nu_{j+1} \approx \gamma^2 \nu_j$. Indeed, from $m_2 \approx m_1 (1+\gamma)$
it follows that $L= \nu_j m_1^2 c_{s,m_1/m_2} \approx \nu_{j+1} m_1^2 (1+\gamma)^2 c_{s,m_1/m_2}$
and then $\nu_{j+1} \approx \gamma^2 \nu_j$.
Accordingly, this agrees with Fig.~\ref{cdh} left where the values $\log_2(\nu_j)$ tend
to be, as $\nu \rightarrow 0$, separated by $2 \log_2(\gamma) \approx -1.38848$.

More concretely, let $F_j$ denote the Fibonacci sequence starting with $F_1=1$,
$F_2=2$, $F_3=3$, \dots.  We can compute the values $\nu=\nu_j$ where $\nu_j$
corresponds to the change  $m_1=F_j \to m_1=F_{j+1}$. With this notation the
blue points in Fig.~\ref{cdh} left correspond to the values of $\log_2 (\nu_j)$
for $9 \leq j \leq 26$.  Moreover, one has $\nu_j \sim \gamma^{2j} \hat{K}$,
for some $\hat{K}$.  In Fig.~\ref{limitK} we represent $\nu_j \gamma^{-2j}$ as
a function of $j$. We see that, for $j$ large enough, it tends to the constant
$\hat{K} \approx 0.0850$.

\begin{figure}[ht]
\begin{center}
\epsfig{file=./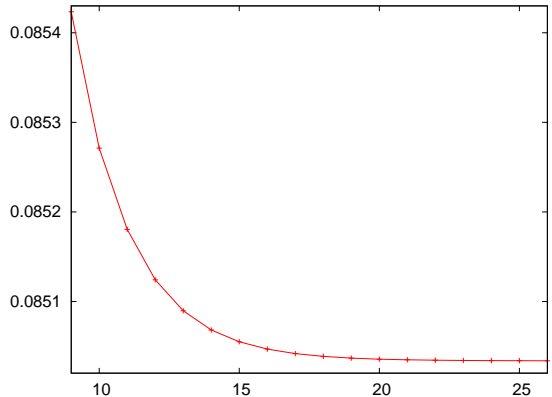,width=8cm} 
\end{center}
\caption{We represent  $\nu_j \gamma^{-2j}$, where $\nu_j$ are the values where
a change of dominant harmonic has been numerically detected, as a function of the index $j$
of the Fibonacci sequence $F_j$ (see text for details).  
}
\label{limitK}
\end{figure}

Let us describe a more general methodology to look for the changes of dominant
harmonic which takes into account the corrections due to the factor $\cP_M$.
Since for $\gamma=(\sqrt{5}-1)/2$ one has $c_{s,m_1/m_2}=c_{s,n} \to 3+\gamma
\approx 3.618034$ we introduce $\tilde{L}=L/c_{s,m_1/m_2}$ and we consider
$\hat{\Psi}_1(\tilde L):=\Psi_1(\tilde{L})/\sqrt{c_{s,m_1/m_2}}$. In Fig.~\ref{5ones} we represent the leftmost five
peaks of Fig.~\ref{cdh}  left as a function of the parameter $\tilde{L}$. They
correspond to $m_1=46368,75025,$ $121393,196418,317811$.  Also, in blue, we
represent the function $\hat{\Psi}_1(\tilde L)$.
We see in the right plot that, as $\nu$ decreases to 0, the curves tend to
$\hat{\Psi}_1(\tilde{L})$.

\begin{figure}[ht]
\begin{center}
\begin{tabular}{cc}
\hspace{-2mm}\epsfig{file=./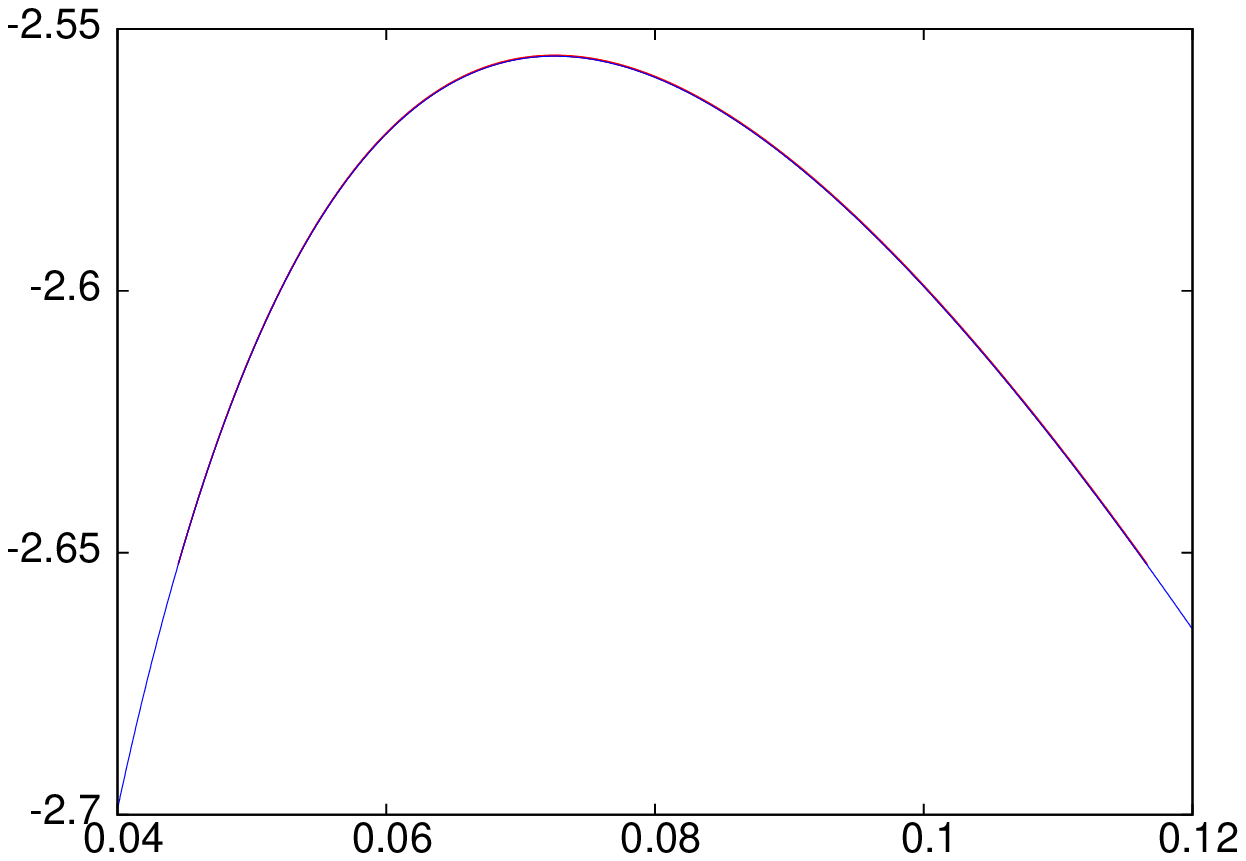,width=7cm} &
\hspace{-4mm}\epsfig{file=./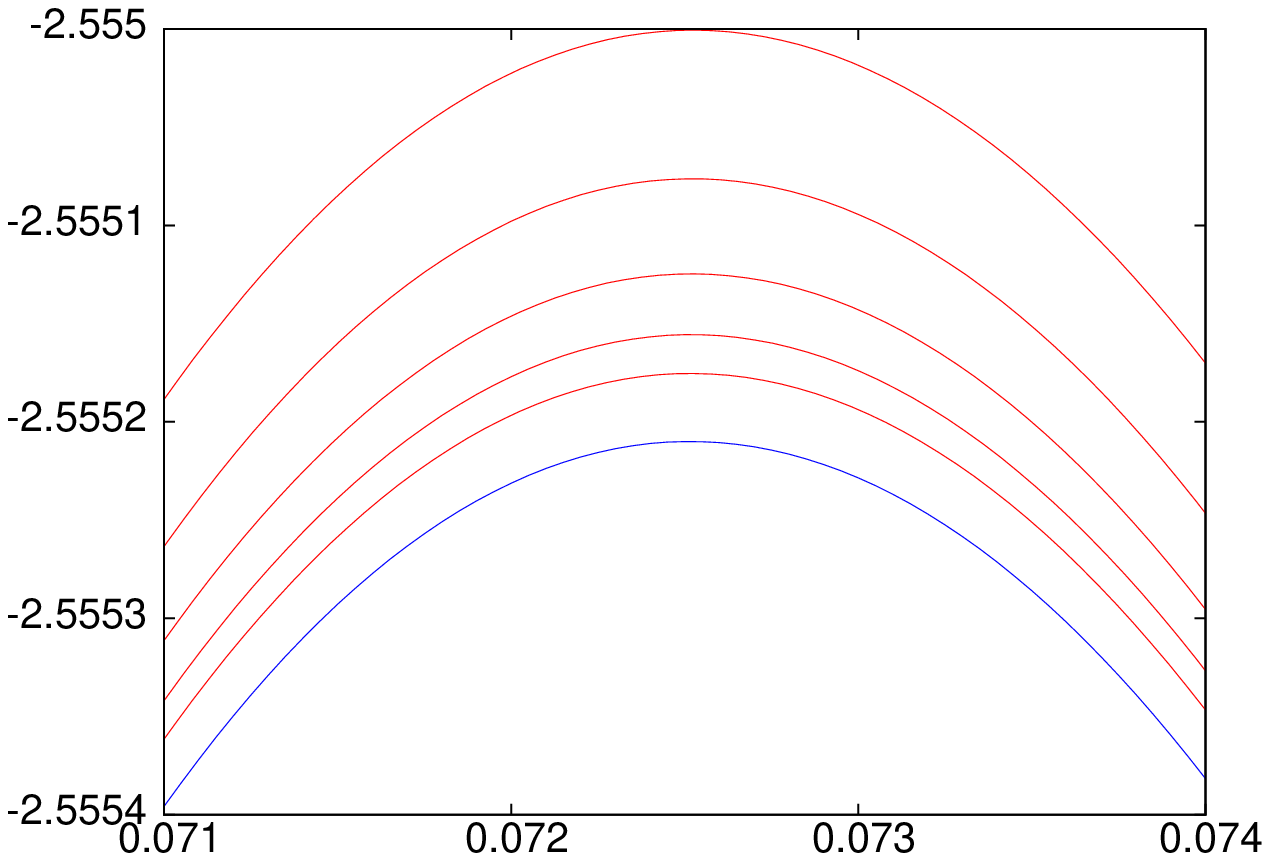,width=7cm}
\end{tabular}
\end{center}
\caption{Left: The five leftmost peaks of Fig.~\ref{cdh} as a function of
$\tilde{L}$ (in red). The function $\hat{\Psi}_1(\tilde{L})$ is also shown (in
blue). All of them almost coincide at this scale.  Right: Magnification of the
central zone of the left plot. We see that the peaks move down as $\nu$
decreases (and $m_1$ increases). They tend to $\hat{\Psi}_1(\tilde{L})$.}
\label{5ones}
\end{figure}

In Fig.~\ref{Psi} we represent the function $\hat{\Psi}_1(\tilde{L})$ as a function
of $\log(\tilde{L})$.  The maximum of $\hat{\Psi}_1(\tilde{L})$ is $\approx
-2.555210$, in good agreement with the numerical values shown in
Fig.~\ref{5ones} and in Fig.~\ref{cdh} left. It is achieved for
$\tilde{L} \approx 0.072529$. After a change of coordinates  the function
$\hat{\Psi}_1(\tilde{L})$ behaves as $-\log(\cosh(\tilde{L}))$, see \cite{DelGelJorSea97,DelGut05}.

\begin{figure}[ht]
\begin{center}
\hspace{-2mm}\epsfig{file=./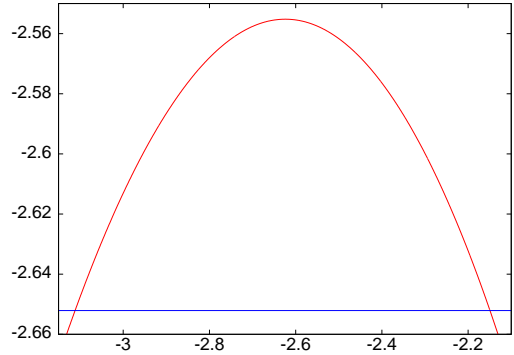,width=7cm}
\end{center}
\caption{We depict $\hat{\Psi}_1(\tilde{L})$ as a function of $\log(\tilde{L})$ for $\gamma=(\sqrt{5}-1)/2$ (see text for details).} 
\label{Psi}
\end{figure}

Let us consider two values of $\tilde{L}$, say $\tilde{L}_1$ and $\tilde{L}_2$,
$\tilde{L}_1<\tilde{L}_2$, corresponding to different harmonics.  Assume that
for $\nu>0$ small enough these harmonics are related to two consecutive best
approximants of $\gamma$, say $m_1/m_2$ and $m_2/m_3$ (the numerators
$m_1$ and $m_2$ are two consecutive Fibonacci numbers). Assume that the change
of harmonic takes place at $\nu=\nu_0^*$ then $\tilde{L}_1= m_1^2\nu_0^*$,
$\tilde{L}_2=m_2^2 \nu_0^*$ and $\hat{\Psi}_1(\tilde{L}_1)=\hat{\Psi}_1(\tilde{L}_2)$.
Moreover, if $m_1$ is large, then one has $m_2 \approx (1+\gamma) m_1$ and therefore
$\tilde{L}_2 \approx (1+\gamma)^2 \tilde{L}_1$. One obtains
$\tilde{L}_l\!\approx\!  0.044524$ as the asymptotic value of $\tilde{L}$ where
the change takes place. Notice that $L=L_l \approx 0.16109$, which is very
close to the value of $\tilde{L}_l$ obtained above using just $\cP_F$.  One has
$\hat{\Psi}_1(\tilde{L}_l) \approx -2.652115$, which is represented as an horizontal
line in Fig.~\ref{Psi}. We conclude that at the value $\nu=\nu_j \approx
\tilde{L}_l/ F_j^2$ takes place the change $m_1=F_j \to m_1= F_{j+1}$ of 
dominant harmonic of $\DFt{1}$. 

In Table~\ref{taulanubif} we can see that, for large range intervals of $\nu$,
both $\DFt{1}$ and $\DFt{2}$ have the same dominant harmonic. Indeed,
relation (\ref{G1G2rel}) implies, in particular, that the changes of dominant
harmonic in $\DFt{1}$ and in $\DFt{2}$ tend to coincide as $\nu
\rightarrow 0$. Concretely, denote by $\nu_j^{(i)}$ the sequence of values of
$\nu$ for which the dominant harmonic of $\DFt{i}$ changes,
the values of $\nu_j^{(1)}$ have been determined in Section~\ref{goldenfreq}. 
To look for the values $\nu_j^{(2)}$ we consider the
condition $\Psi_2(L_1) = \Psi_2(L_2)$ with $L_2=L_1(1+\gamma)^2$ which, by
(\ref{G1G2rel}), is equivalent to 
\[\Psi_1(L_1)  = \Psi_1(L_1(1+\gamma)^2) - \frac{\sqrt{L_1}}{m_1} \left( \gamma \log(L_1) + 2 (1+\gamma) \log(1+\gamma) \right).\] 
Note that, since $L = c_{s,m_1/m_2} \nu m_1^2$, when $\nu \rightarrow 0$ we recover the condition
that determines the values $\nu_j^{(1)}$. 
One has $ \nu_{j}^{(2)} = \tilde{L}_l^{(2)}/F_j^2, $ where
$\tilde{L}_l^{(2)}= \tilde{L}_l + \mathcal{O}(\sqrt{\nu})$, being $\tilde{L}_l
\approx 0.044525$. The values of $\nu_j^{(1)}$ and $\nu_j^{(2)}$, corresponding to the changes of dominant
harmonic in $\DFt{1}$ and $\DFt{2}$, respectively, are displayed in
Table~\ref{nu1_nu2}.  We have considered the range $\log_2(\nu) \in [-24,-16]$.
We refer to Fig.~\ref{nterms} left where the computation of the amplitude of
the splitting for this range of values of $\nu$ is shown. 
The best approximant $N_j/D_j = F_j/F_{j+1}$ corresponds to a dominant harmonic
for $\nu_j = \mathcal{O}(1/F_j^2)$.  Hence $\nu_j^{(1)} - \nu_j^{(2)}=
\tilde{L}_l/F_j^2 - (\tilde{L}_l + \mathcal{O}(\sqrt{\nu_j}))/F_j^2 =
\mathcal{O}(\nu_j \sqrt{\nu_j})$, as it is observed in the last
column of Table~\ref{nu1_nu2}.

\begin{table}
\begin{center}
\begin{tabular}{|c|c|c|c|c|c|}
\hline
 $N_j$ &  $N_{j+1}$  & $\log_2(\nu_j^{(1)})$ & $\log_2(\nu_j^{(2)})$&   $\nu_j^{(1)}-\nu_j^{(2)}$ & Coeff \\
\hline
  55  &   89   & -16.04563135  & -16.05223394 &  0.675040E-07 &  1.191635 \\
  89  &  144   & -17.43664042  & -17.44071697 &  0.159057E-07 &  1.190968 \\
 144  &  233   & -18.82665512  & -18.82917332 &  0.375102E-08 &  1.190692 \\
 233  &  377   & -20.21609319  & -20.21764898 &  0.884894E-09 &  1.190469 \\
 377  &  610   & -21.60516252  & -21.60612386 &  0.208812E-09 &  1.190355 \\
 610  &  987   & -22.99400932  & -22.99460338 &  0.492817E-10 &  1.190280 \\ 
\hline
\end{tabular}
\caption{
Values of $\nu_j^{(1)}$ and $\nu_j^{(2)}$ for which the change from the dominant harmonic related to the approximant $N_j/D_j$ to $N_{j+1}/D_{j+1}$ takes place.
The last column displays the value of the coefficient $\text{Coeff} \approx (\nu_j^{(1)}-\nu_j^{(2)})/\nu_m^{3/2}$, $\nu_m=(\nu_j^{(1)}+\nu_j^{(2)})/2$.
}
\label{nu1_nu2}
\end{center}
\end{table}

We remark that the previous comments assert that $\nu_j^{(1)}$ and
$\nu_j^{(2)}$, corresponding to changes of dominant harmonic in $\DFt{1}$
and $\DFt{2}$, tend to coincide as $\nu \rightarrow 0$. For values of $\nu
\in I_j=[\nu_j^{(2)},\nu_j^{(1)}]$ the dominant harmonic of each splitting
function is different. This has some dynamical consequences: 
according to Appendix~\ref{split-difusion} 
one expects to have a faster diffusion process in phase space (but taking place in
exponentially large times!) for values of $\nu \in I_j$ rather than
for values of $\nu$ outside the union of the intervals $I_j$. Numerical massive
investigations of the diffusion phenomena taking place for the example
considered in this work and for small enough values of $\nu$ so that the limit
behaviour can be observed would require a huge (nowadays prohibitive!) amount
of computing time. Nevertheless, we believe that some numerical explorations of
this model for moderate values of $\nu$ are of much interest. We postpone
them for future works.

\subsubsection{A general frequency $\gamma$}

The same strategy can be used to look for values $\nu_j$ for which there is a
change of dominant harmonic of $\DFt{1}$ (and of $\DFt{2}$) for general
$\gamma$. Consider approximants $m_1/m_2$ and $n_1/n_2$ of $\gamma$ such
that the related harmonics become dominant for $\DFt{1}$ (similar for $\DFt{2}$) in 
adjacent intervals of $\nu$.
The change of dominant harmonic for $\DFt{1}$ takes place for $\nu$ such that
 $\Psi_1(L_1)=\Psi_1(L_2)$, where $L_1=c_{s,m_1/m_2} \nu m_1^2$
and $L_2= c_{s,n_1/n_2} \nu m_1^2 $. 
Using that $L_2=L_1 n_1^2 c_{s,n_1/n_2} / (m_1^2 c_{s,m_1/m_2})$
the previous equation can be solved for $L_1$ (e.g. numerically by
simple iteration) to obtain the values of $\nu = \nu_j$ corresponding to the
changes.

As an illustrative example, we show in Fig.~\ref{em2} the results for the transcendental
frequency number $\gamma=e-2$. From its CFE properties it follows that the constants
$c_{s,m_1/m_2}$ become unbounded, see details in Section~\ref{sect_unboundedCFE}. On the other
hand, we see in the figure 
that all the harmonics related to best approximants become dominant in a
suitable range of $\nu$. We remark that for other $\gamma$ it might happen that
some best approximants will not be related to a dominant harmonic of $\DFt{1}$
(see examples in Section~\ref{Sec:Otherfreq}). 
Concretely, for $\gamma=e-2$, we show in Fig.~\ref{em2} the functions $\Psi_1$ in blue
lines and the points that correspond to the values of $\nu$ where a change of
the dominant harmonic takes place.
These values are obtained by comparing the functions $\Psi_1$ for
different approximants as explained above in this section. 
As an extra check, we have compared the
values of $\nu$ obtained by the previous procedure with the corresponding
values obtained if one computes the contribution of each harmonic $m_1/m_2$
using the complete expression (\ref{dG1}) for $C_{m_1,m_2}$. These
contributions are shown in red lines in the figure. We see that the blue lines
are good enough approximations of the red ones for $\nu$ small enough. Moreover the values of $\nu$
are almost coincident even for the rightmost part of the figure where the
agreement between the blue and red curves is not so good.

\begin{figure}[ht]
\begin{center}
\hspace{-2mm}\epsfig{file=./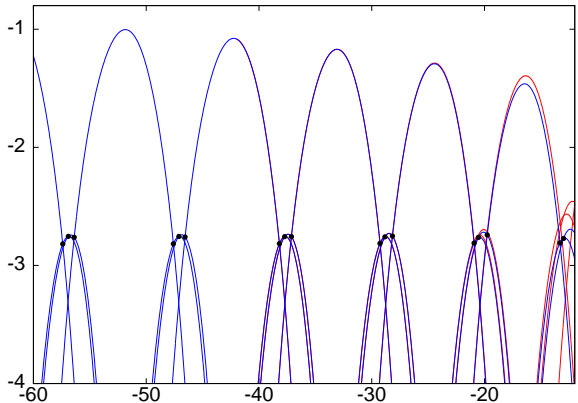,width=8cm}
\end{center}
\caption{For $\gamma=e-2$ we show the contribution of the different harmonics,
the range of $\nu$ where they become dominant and the changes. The horizontal
axis corresponds to $\log_2(\nu)$. In red we depict $\sqrt{\nu}
\log(C_{m_1,m_2})$. In blue the functions $\Psi_1(L)/\sqrt{c_{s,m_1/m_2}}$ obtained for
the corresponding best approximants. The points correspond to the values of
$\nu$ where there is a change of dominant harmonic. They are computed using the
functions $\Psi_1(L)$. Note that for $\log_2(\nu) < -25$ both red and blue
curves become almost coincident.}
\label{em2}
\end{figure}

\subsection{The effect of non-dominant terms of the splitting function} \label{Sec:non-dominant}

To find the dominant terms of the splitting $\DFt{1}$ we have considered
values of $\nu$ small enough (fixed) and have looked for the values of $m_1$ for which 
$L= c_{s,m_1/m_2} \nu m_1^2$ is the closest to the maximum of $\Psi_1(L)$. This term (or these
terms if, for example, we are close to a change of dominant harmonic) gives the
maximum contribution to the Melnikov function $\DFt{1}$
in (\ref{exprdG1dG2_1}). However, to assert that the splitting Melnikov function
$\DFt{1}$ is of the order of this/these dominant terms there are some
details to be checked.  As said in the Introduction, a theoretical proof must 
consider the effect of all the harmonics of the splitting function, bound the
effect of the ones related to approximants which are not best approximants, and
bound the effect of the best approximants which are non-dominant (for the values of
$\nu$ considered). In particular, one has to address the following questions. 
\begin{enumerate}
\item For a fixed $\nu$ we look for $m_1$ giving the most important terms in
$\DFt{1}$. Which is the effect of the other terms associated to best
approximants for this value of $\nu$?
\item Of course there are other approximants of $\gamma$ which are not best
approximants. We call them ``subapproximants''.  Which is their contribution to
$\DFt{1}$? Which are the corresponding constants $c_{s,m_1/m_2}$ related to each family
of subapproximants and which is their contribution to $\DFt{1}$?
\item Looking at the expression (\ref{exprdG1dG2_1}) of $\DFt{1}$ we see
that values of $k,i,j$ for which $s$ is large correspond to terms which make a
small contribution to the total sum. But there are infinitely many of these
terms. How to bound their total contribution?  
\end{enumerate}

Even if we are not going to address these questions formally, we want to provide
an idea of how useful can be the universal function $\Psi_1$ to investigate
such questions. For concreteness we focus on
$\gamma=(\sqrt{5}-1)/2$. We recall that in this case one has $c_{s,n}
\rightarrow 3+\gamma$ as $n \rightarrow  \infty$ (see
Section~\ref{Sec:periodicity_csn}). We proceed as follows.

\begin{enumerate}
\item To evaluate the function $\Psi_1(L)$ we consider the algorithm introduced
in Section~\ref{sectchanges}. Recall that $\Psi_1$ depends on
$\gamma,c_{s,m_1/m_2},c$ and $d$ but not on $\nu$. 

\item We compute the maximum of $\Psi_1(L)$. We denote by $\tilde{L}_M$ the
value of $\tilde{L}=L/c_{s,m_1/m_2}$ for which the maximum is attained.

\item We take $\nu$ small enough and we look for the integer $m_1$, among the
numerators of the best approximants, closest to $\sqrt{\tilde{L}_M/\nu}$.
Maybe there are two integer values at a similar distance and a bifurcation
takes place because the dominant harmonic of $\DFt{1}$ changes. For
$\gamma=(\sqrt{5}-1)/2$ this happens whenever $\Psi_1(L)=
\Psi_1(L(1+\gamma)^2)$.

\item If for the chosen value of $\nu$  there is an integer $m_1$ for which
$\tilde{L}= \nu m_1^2=\tilde{L}_M$, then we have to check that the value of $\Psi_1$ at
$\tilde{L}=\tilde{L}_{Mk}:=\tilde{L}_M(1+\gamma)^k$ for $k=\pm 2,\pm 4, \ldots$ is small
enough. 

\item If $\nu=\nu_B$ corresponds to a bifurcation then
$\Psi_1(\tilde{L}_B)=\Psi_1(\tilde{L}_B (1+\gamma)^2)$ for 
 $\tilde{L}_B= \nu m_1^2$.

\item It might be also interesting to look for values of $\nu$ for which there
is a dominant harmonic but there is a change of subdominant. This happens for
$\tilde{L}=\tilde{L}_C$ such that $\Psi_1(\tilde{L}_C(1+\gamma)^2) =
\Psi_1(\tilde{L}_C/(1+\gamma)^2)$.
\end{enumerate}

We note that we have performed all the computations for $c=5$ and $d=7$. What
happens in the limit cases, that is, either  for $d \rightarrow \sqrt{2}$ as a
function of $c$ or for $c \rightarrow 1$ as a function of $d$?  Note that when
$c \rightarrow 1$ the function $f(\theta)$, see (\ref{perturbedsystem}), tends to be
unbounded as well as its Fourier coefficients (\ref{expcs}).
The same thing happens for the function $g(y_1)$ when $d\to\sqrt{2}$ and its
power expansion.

In Fig.~\ref{varisL} we summarize some data obtained by the
implementation of the previous items. Concretely, in Fig.~\ref{varisL} top
left we show the points  $\tilde{L}_S$ where $S=M,B,C$.
The points with subscript $+$ and $++$
(resp.  $-$ and $--$) denote the values of $\tilde{L}_S$ for the next and the
second next approximants to $\gamma$.
Since we are dealing
with the golden frequency $\gamma$ we have considered the normalized function
$$
\hat{\Psi}_1(\tilde L)= \Psi_1(\tilde{L})/\sqrt{c_\infty}, \quad \text{being} \ c_\infty=3+\gamma \ \text{the limit value of } \{c_{s,n}\}_n,
$$
and we represent $\hat{\Psi}_1(\tilde L)$ as a function of $\log(\tilde L)$.
We also show the same function translated to the right and to the left by $2
\log(1+\gamma)$. These correspond to the functions
$\Psi_1$ associated to the previous and next best approximants of $\gamma$.
The top left plot corresponds to $c=5,d=7$. In the top center plot we represent
the same as in the top left one, but for values $c=1.1, d=1.5$ close to the
limit. 

In the top right plot of Fig.~\ref{varisL} we represent
$\log(-\hat{\Psi}_1(\tilde{L}))$ as a function of $\log \tilde{L}$ for
$c=5,d=7$, and we check that $\log(-\hat{\Psi}_1(\tilde{L}))$ behaves as $|\log
\tilde{L}|/2$ as follows from the expressions for $I$, $\cP_F$, $\cP_M$ and $\Psi$,
in (\ref{eqI}), (\ref{cPF}), (\ref{cPM}) and (\ref{funPsi}), respectively.
In the logarithmic scale used in the plot we clearly observe that, after
shifting the origin and scaling coordinates, $\hat{\Psi}_1$ behaves as
$\log(\cosh(\tilde{L}))$.

The dependence of the maximum value of $\hat{\Psi}_1(\tilde{L})$ as a function of $(c,d)$
forms the surface shown in the bottom row of Fig.~\ref{varisL}. We recall our
notation: the maximum of $\hat{\Psi}_1(\tilde{L})$ is achieved at
$\tilde{L}=\tilde{L}_M$. As expected all the maxima are negative values.
%In the bottom plots we show the behaviour of $\hat{\Psi}_1(\tilde{L}_M)$ for
%different values of $c$ and $d$. Concretely, in the bottom left plot we
%represent $\log(-\hat{\Psi}_1(\tilde{L}_M))$ as a function of $\log(d-\sqrt{2})$ for
%different values of $c$. Each line corresponds to a different value of $c$. The
%values of $c$ are $5(-0.5)1.5$ and they correspond to the lines in the figure
%from top to bottom. We see that the lines tend to have a slope equal to
%$1/4$. This means that, for a fixed value of $c$, $\hat{\Psi}_1(\tilde{L}_M)$
%tends to zero  as $(d-\sqrt{2})^{1/4}$ when $d\rightarrow \sqrt{2}$.  On the
%other hand, in the bottom right plot of Fig.~\ref{varisL} we show $\lim_{c\to
%1}\log(-\hat{\Psi}_1(\tilde{L}_M))$ for different values of $d$, also as a
%function of $\log(d-\sqrt{2})$ which ranges on the $x$-axis. The slope tends to
%$1/2$. This means that $\lim_{c\to 1,d=d^*}\log(-\hat{\Psi}_1(L_M))$  behaves
%as $(d^*-\sqrt{2})^{1/2}$. 

\begin{figure}[!ht]
\begin{center}
\begin{tabular}{ccc}
\hspace{-6mm}\epsfig{file=./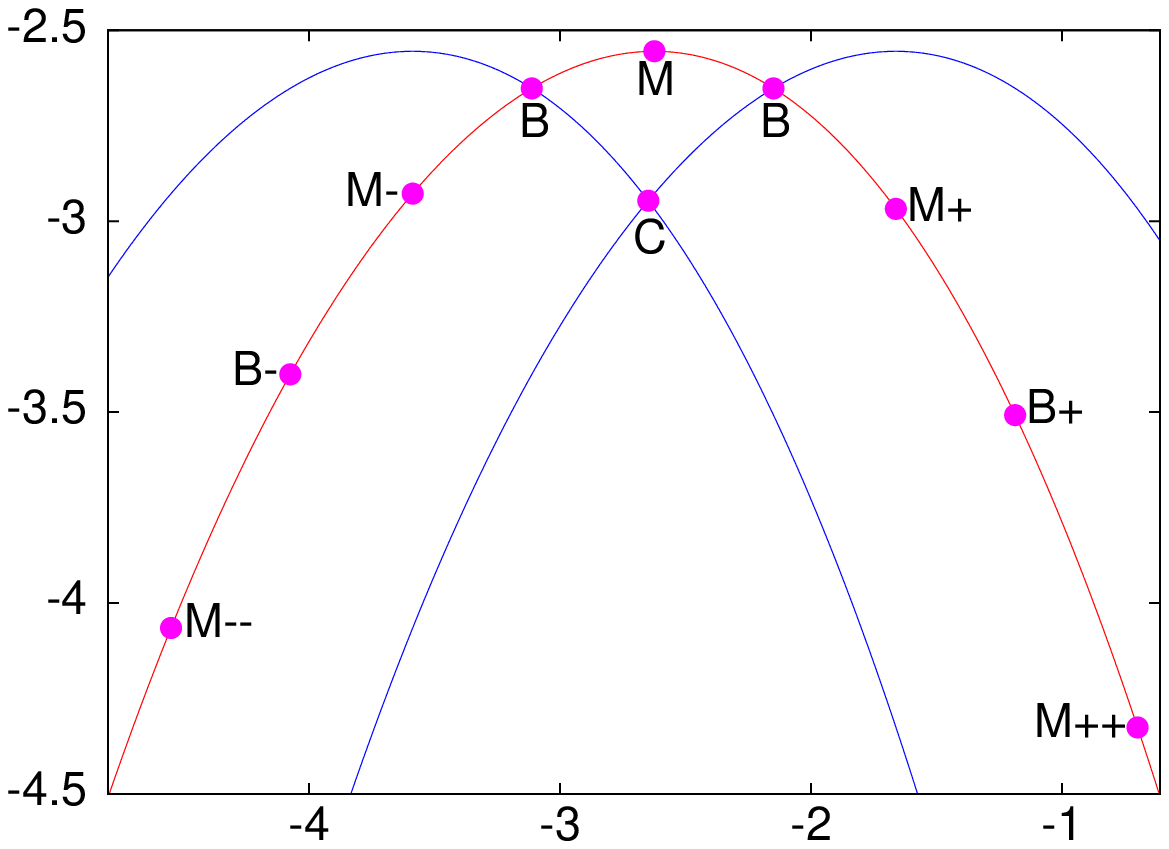,width=6cm} &
\hspace{-6mm}\epsfig{file=./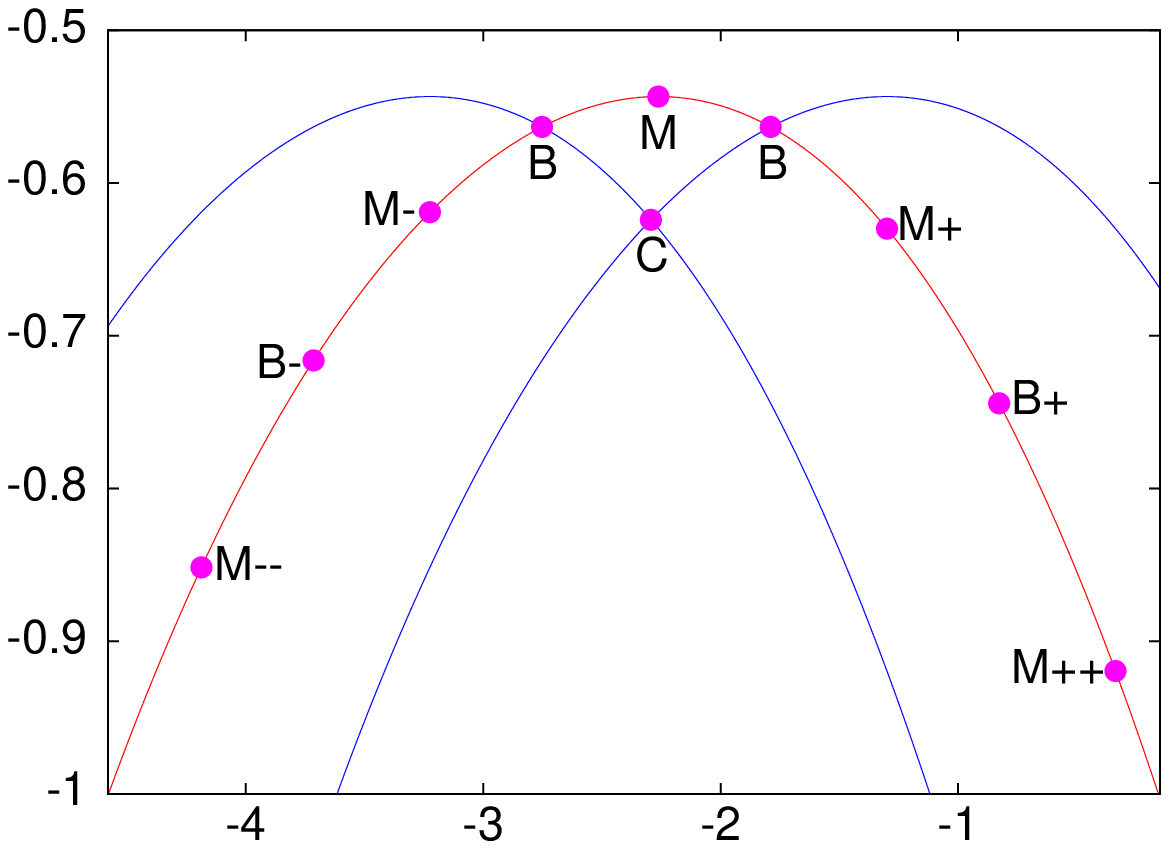,width=6cm} &
\hspace{-6mm}\epsfig{file=./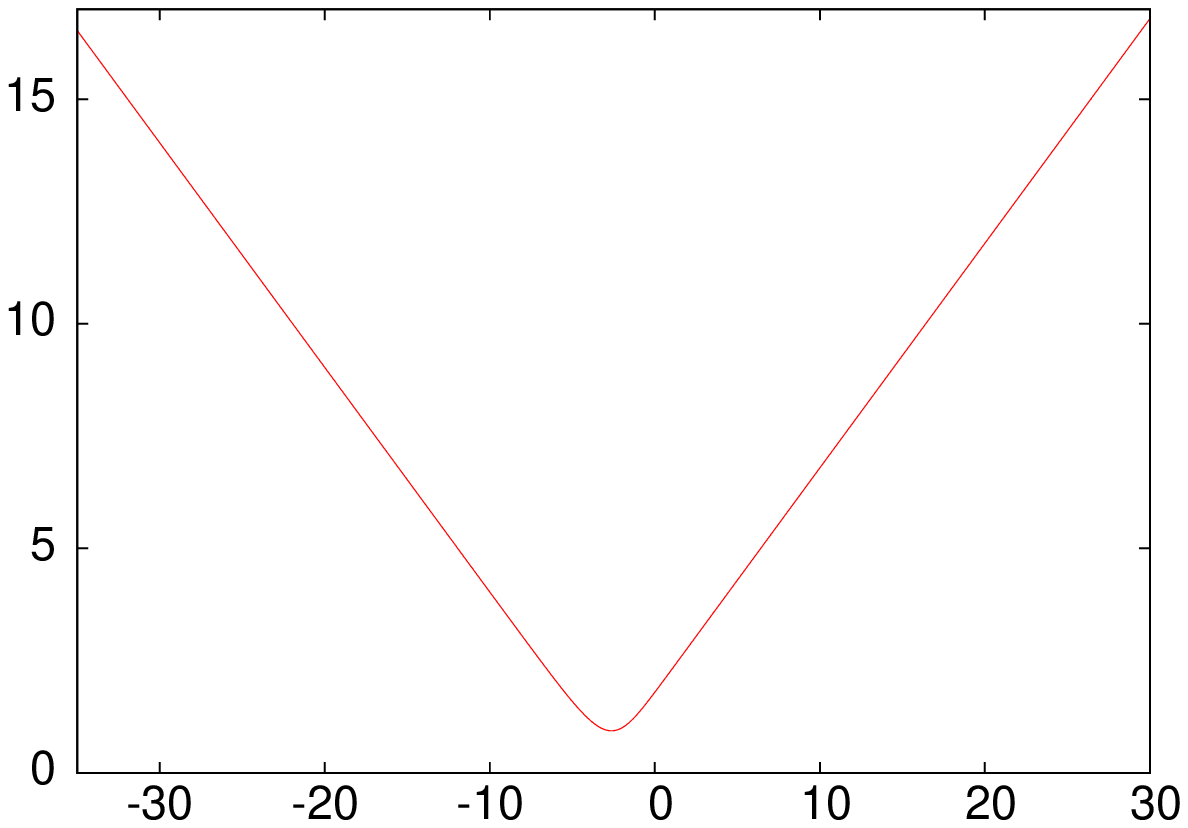,width=6cm}
\end{tabular}
\begin{tabular}{c}
\epsfig{file=./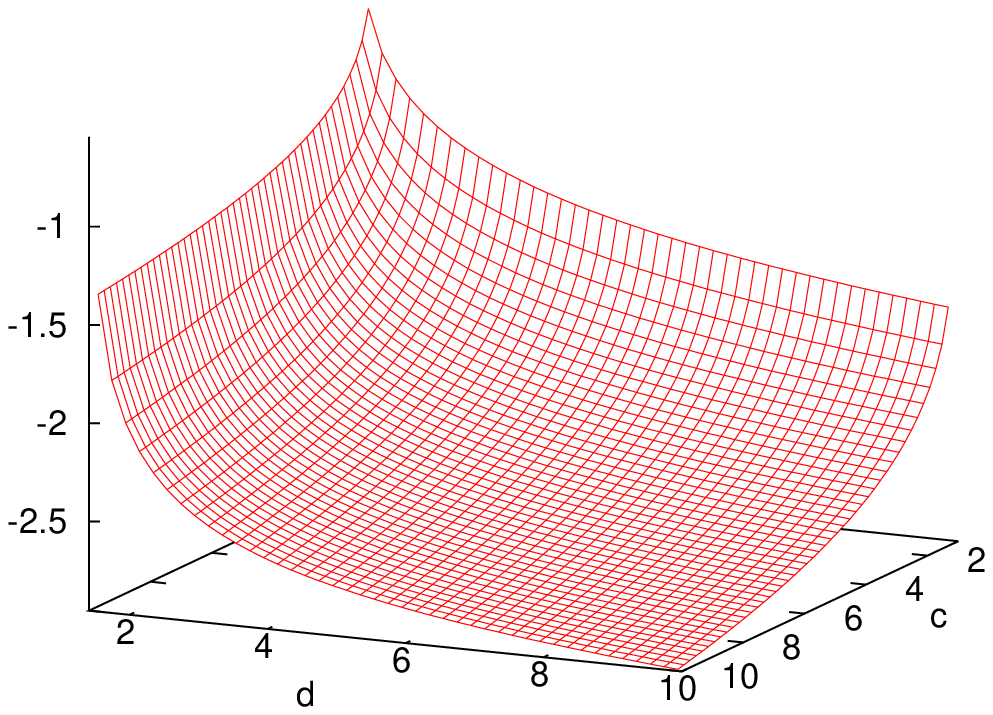,width=10cm}
\vspace{-8mm}
\end{tabular}
%\begin{tabular}{cc}
%\hspace{-6mm}\epsfig{file=./,width=6cm} &
%\hspace{-2mm}\epsfig{file=./,width=6cm}
%\end{tabular}
\end{center}
\vspace{-6mm}
\caption{$\gamma=(\sqrt{5}-1)/2$. Top left: $\hat{\Psi}_1(\tilde{L})$ as a function of $\log(\tilde{L})$ for the
values $c=5,d=7$. Also we show the same function translated to the left and to the right
by $2\log(1+\gamma)$. The marked points are: $M$ for maximum and
then $M_{++},M_{+},M_{-},M_{--}$ for the $m_1$ values of the previous and next approximants; 
$B$ for the change of dominant harmonic and then $B_{+},B_{-}$
for the nearby approximants too; $C$ for the subdominant harmonic change. Top center:
the same as in the top left but for 
$c,d$ values close to the limit: $c=1.1$, $d=1.5$. Top right: for $c=5,d=7$ we show $\log(-\hat{\Psi}_1(\tilde{L}))$ as a function of
$\log(\tilde{L})$.
Bottom: Maxima of $\hat{\Psi}_1(\tilde{L})$ as a function of $(c,d)$.}
%Bottom left: $\log(-\hat{\Psi}_1(L_M))$ as a function of $\log(d-\sqrt{2})$
%for $c=5(-0.5)1.5$ (corresponding to the lines from top to bottom in the plot).
%Bottom right: Values of $\lim_{c\to 1}\log(-\hat{\Psi}_1(L_M))$ for different values 
%of $d$, also as a function of $\log(d-\sqrt{2})$. See text for comments.}
\label{varisL}
\end{figure}

\subsection{The splitting function for different frequencies} \label{Sec:Otherfreq} 

In this section we illustrate what happens for several frequencies $\gamma$. We
show some computations for concrete cases, including the golden mean, for
comparison, in Fig.~\ref{othergamma}. The sequence of dominant harmonics and
the values $\nu=\nu_j$ at which the change of dominant harmonic takes place
depend on the CFE and not only on the
Diophantine properties of $\gamma$. In Fig.~\ref{othergamma} we represent the
contributions $C_{m_1,m_2}$ to $\DFt{1}/\epsilon$ as a function of
$\log_2(\nu)$ for different values of $\gamma$. The results for
$\gamma=(\sqrt{5}-1)/2$ are shown in the top left plot (case 0). Concretely, we
represent the contributions $C_{m_1,m_2}$ corresponding to the approximants of
the golden frequency with $m_1$ between $21$ and $514229$. Compare with
Fig.~\ref{cdh} left. Note that all the approximants become dominant in a
suitable range of $\nu$. However, as can be seen in the plots, this does not
happen for other frequencies $\gamma$.  For concreteness, below we consider the
following cases (the notation $10 \times 1$ in the CFEs below denotes ten consecutive quotients equal to one). 

\begin{align*}
\text{Case 0: } \gamma  &= (\sqrt{5}-1)/2 = [1,1,1,1,1,...] \approx 0.618033988749894848204. \\[0.2cm]
\text{Case 1: } \gamma  &= (55(1+b)+34)/(89(1+b)+55) \text{ with } b=(\sqrt{122}-10)/11, \\
             & \text{hence } \gamma=[10 \! \times \! 1, 1,10,1,1,10,1,1,10,1,...] \approx 0.6180512268192526496794.\\[0.2cm]
\text{Case 2: } \gamma &= (55(1+b)+34)/(89(1+b)+55) \text{ with } b=(\sqrt{140}-10)/20, \\
 & \text{hence } \gamma=[10 \! \times \! 1, 1,10,1,10,1,10,1,10...] \approx 0.6180513744611582707944. \\[0.2cm]
\text{Case 3: } \gamma &= [10 \! \times \! 1,2,3,4,5,6,7,8,9,10,...] \approx 0.6180206632934375446297.
\end{align*} 
For each one of the previous cases, we list the consecutive numerators of the
approximants of $\gamma$ for which the corresponding harmonic term of the
splitting function become dominant (in a suitable range of $\nu$). 
\begin{align*}
\text{Case 0: } & 21,34,55,89,144,233,377,610,987,1597,2584,4181,6765,10946,17711,28657,\\
        & 46368,75025,121393,196418,317811,514229. \\[0.2cm]
\text{Case 1: } & 21,34,89,945,1034,1979,20824,22803,43627,459073,502700. \\[0.2cm]
\text{Case 2: } & 21,34,89, 1034,12319,146794. \\[0.2cm]
\text{Case 3: } & 21,34,55,144,487,2092,10947,67774,485365.
\end{align*}
The contributions of the harmonic terms related to consecutive best
approximants to the total splitting are shown in Fig.~\ref{othergamma}.
In order to explain the results displayed in the figure for different 
frequencies $\gamma$, we investigate the Diophantine properties
of $\gamma$ and relate them to the properties of the constants $c_{s,m_1/m_2}$. 
\begin{figure}[ht]
\begin{center}
\begin{tabular}{cc}
\hspace{-3mm}\epsfig{file=./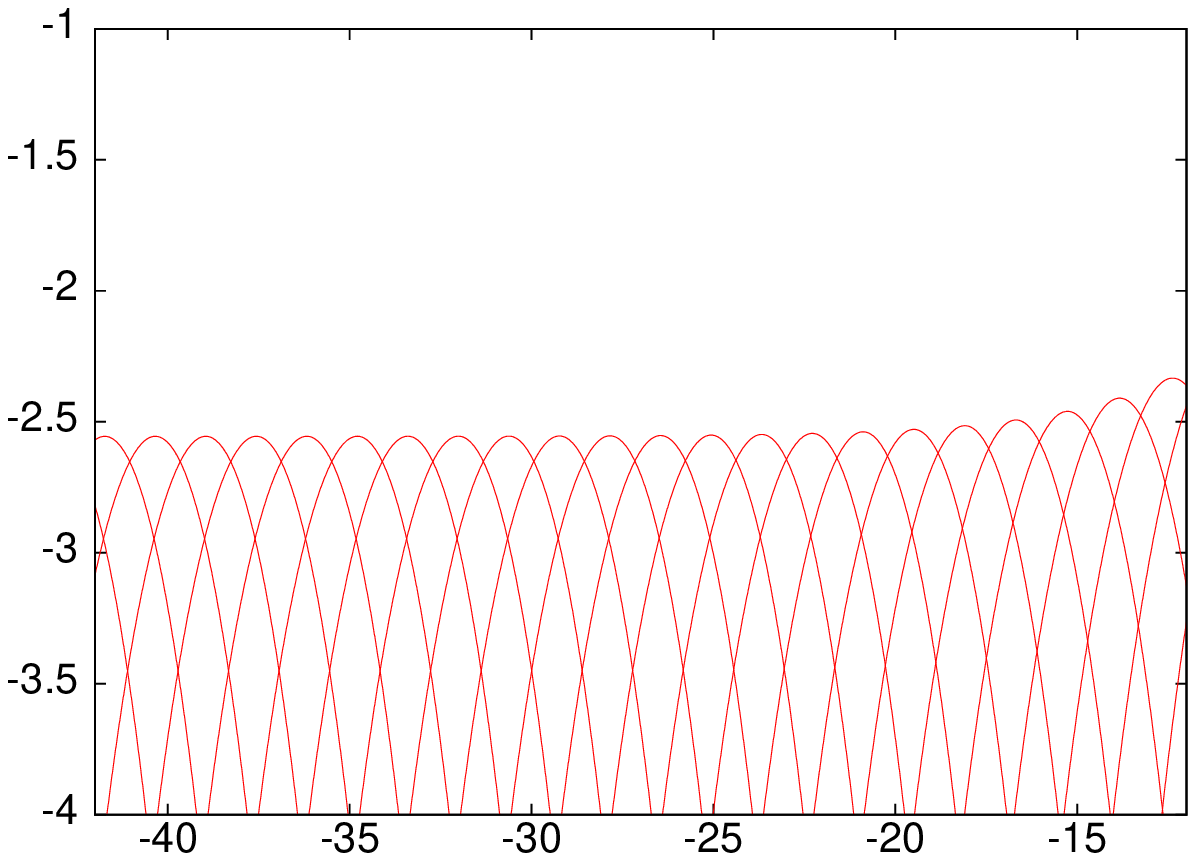,width=8cm} &
\hspace{-6mm}\epsfig{file=./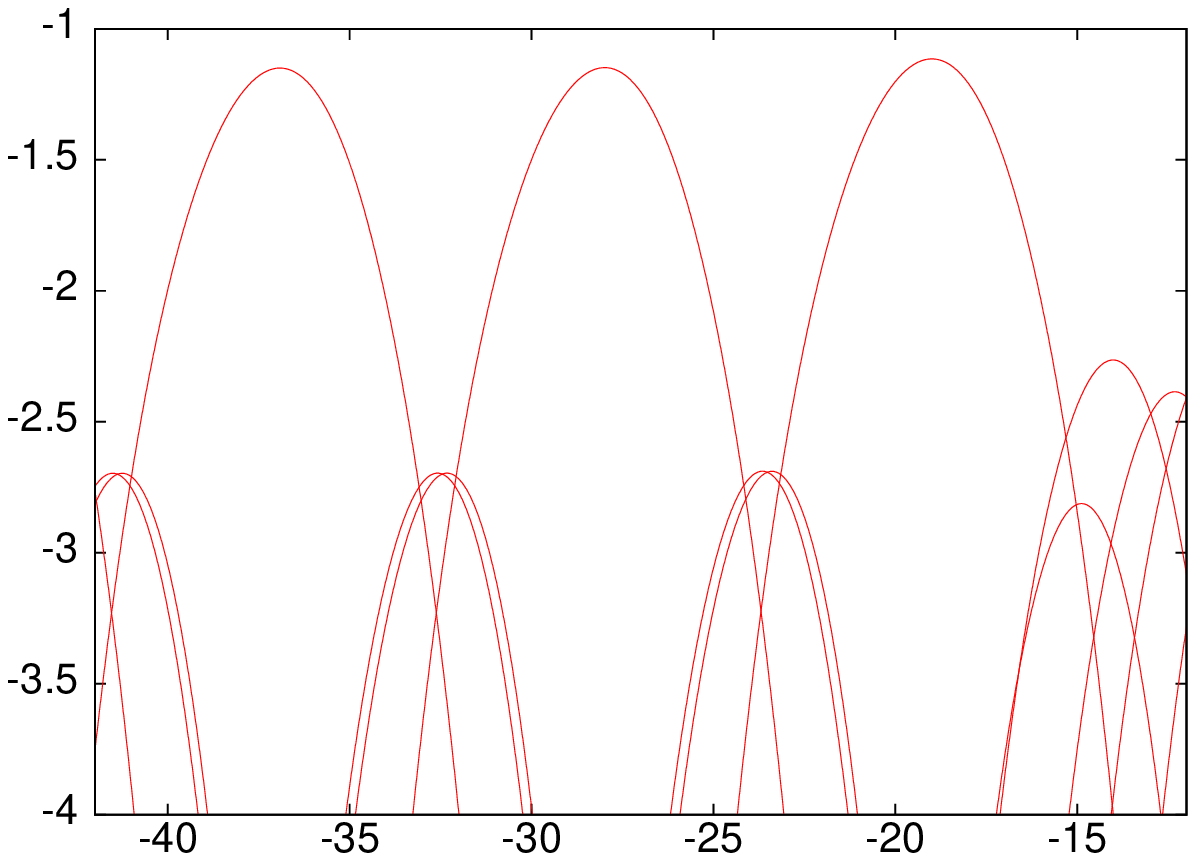,width=8cm} \\
\hspace{-3mm}\epsfig{file=./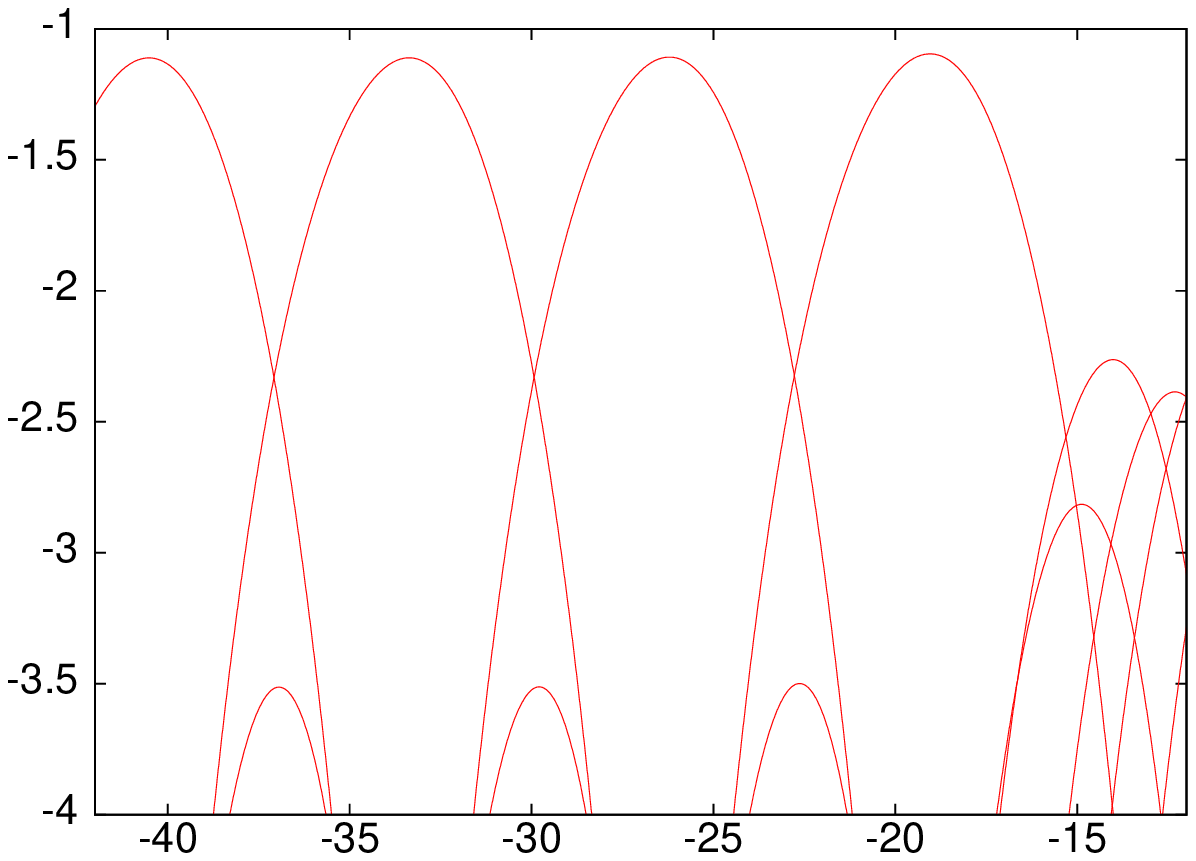,width=8cm} &
\hspace{-6mm}\epsfig{file=./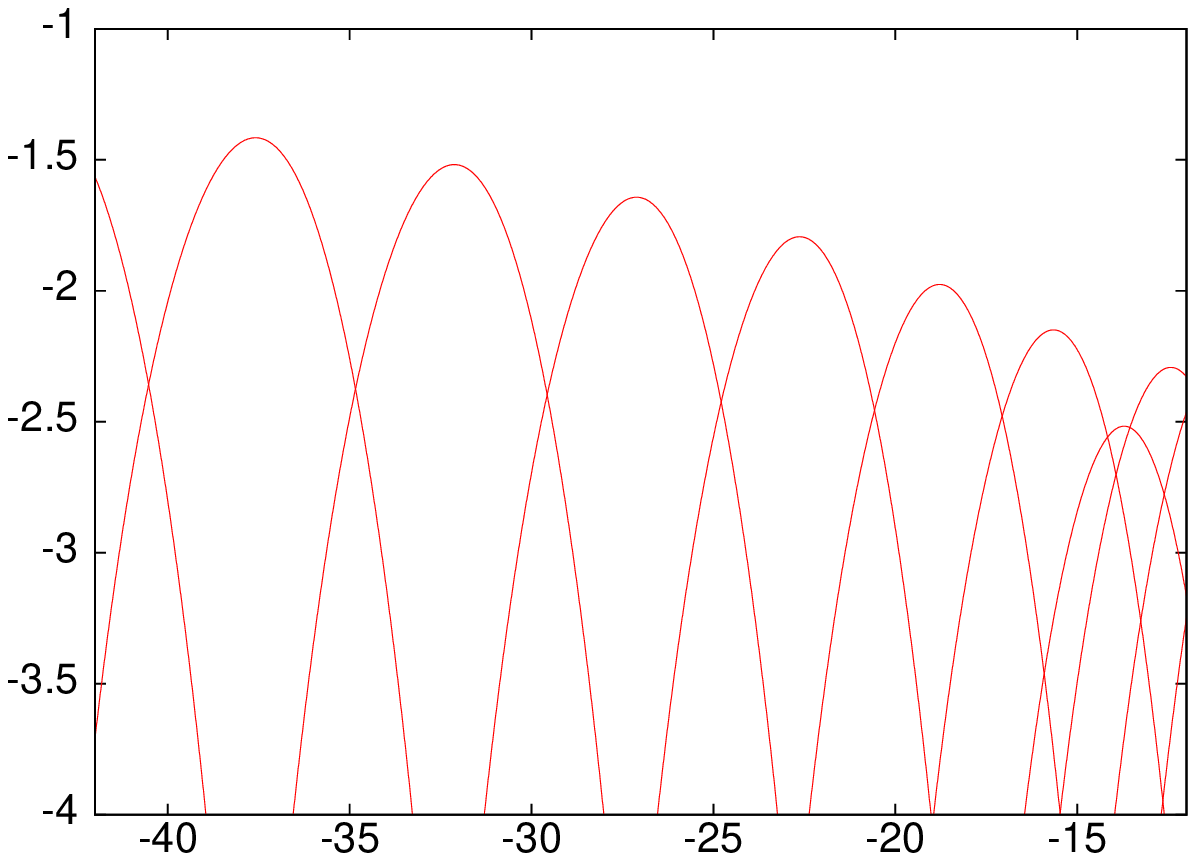,width=8cm} \\
\end{tabular}
\end{center}
\caption{We represent the values of $\sqrt{\nu} \log(C_{m_1,m_2})$ for the approximants
$m_1/m_2$ that contribute to $\DFt{1}/\epsilon$ within the rang of $\nu$ in the plots
($\log_2(\nu)$ ranges in the $x$-axis). 
Top left (Case 0): $\gamma=(\sqrt{5}-1)/2$.  
Top right (Case 1): $\gamma= (55(1+b)+34)/(89(1+b)+55)$, with $b=(\sqrt{122}-10)/11$,
Bottom left (Case 2): $\gamma= (55(1+b)+34)/(89(1+b)+55)$, with
$b=(\sqrt{140}-10)/20$. Bottom right (Case 3): $\gamma=[0;10 \! \times \! 1,2,3,4,5,6,7,8,9,10,...]$.
The same windows have been used in all plots for comparison.
}
\label{othergamma}
\end{figure}

\subsubsection{Periodicity of the constants $c_{s,m_1/m_2}$ for quadratic irrational frequencies} \label{Sec:periodicity_csn}

First, it turns out that for quadratic $\gamma \in \mathbb{R}\setminus
\mathbb{Q}$ the constants $c_{s,m_1/m_2}$ tend to be periodic when $\nu
\rightarrow 0$.  This is a consequence of the basic CFE property in
Lemma~\ref{lema_approximants_distance} below.

Let $\{q_j\}_{j\geq 0}$ be an infinite or finite sequence of natural numbers,
with $q_0 \geq 0$ and $q_j \geq 1$ for $j \geq 1$, which defines a CFE of a real number in the
usual way. Given a frequency $\gamma=[q_0;q_1,q_2,...] = q_0+\frac{1}{q_1 +
\frac{1}{q_2 + ...}}$, denote by $N_n/D_n=[q_0;q_1,\dots,q_n]$, $n\geq 0$, the
$n$-th order approximant of $\gamma$. Introducing $N_{-1}=1$, $D_{-1}=0$,
the following basic properties hold (see for example \cite{Khi64} for proofs).
For all $n \geq 1$,
\vspace{-0.2cm}
\begin{enumerate}
\item[(i)] $N_n = q_n N_{n-1} + N_{n-2}, \qquad D_n=q_n D_{n-1}+D_{n-2}$. 
\item[(ii)] $|D_n N_{n-1} - D_{n-1} N_n| =1$.
\item[(iii)] If $\beta_n=[0;q_{n+1}, ... ]$ then
$\displaystyle{\gamma = [q_0;q_1,q_2,...,q_{n-1},q_{n}+\beta_n] = \frac{N_n + \beta_n N_{n-1}}{D_n + \beta_n D_{n-1}}.}$
\item[(iv)] $\displaystyle{\frac{D_{n-1}}{D_n} =  [0;q_n,q_{n-1},...,q_1]}.$
\end{enumerate}
We introduce the notation $q_{+,n}=[q_{n+1};q_{n+2},...]$ and $q_{-,n}=[q_n;q_{n-1},...,q_1]$. 

\begin{lema}\label{lema_approximants_distance}
The distance between the $n$-th order approximant and $\gamma$, for arbitrary
$\gamma \in \mathbb{R} \setminus \mathbb{Q}$, satisfies 
$$
\left( D_n \left| D_n  \gamma - N_n \right| \right)^{-1} = [q_{n+1};q_{n+2},...]+[0;q_n,q_{n-1},...,q_1].
$$
\end{lema}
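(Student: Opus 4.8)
The plan is to express $\gamma - N_n/D_n$ exactly using property (iii) and then simplify using properties (i), (ii) and (iv). First I would start from (iii): writing $\beta_n = [0;q_{n+1},q_{n+2},\dots]$, we have
\[
\gamma = \frac{N_n + \beta_n N_{n-1}}{D_n + \beta_n D_{n-1}},
\]
so that
\[
D_n\gamma - N_n = \frac{D_n(N_n + \beta_n N_{n-1}) - N_n(D_n + \beta_n D_{n-1})}{D_n + \beta_n D_{n-1}}
= \frac{\beta_n(D_n N_{n-1} - N_n D_{n-1})}{D_n + \beta_n D_{n-1}}.
\]
By property (ii), the numerator is $\pm\beta_n$, and since $\beta_n>0$, $D_n>0$, $D_{n-1}\geq 0$, the denominator is positive; hence
\[
|D_n\gamma - N_n| = \frac{\beta_n}{D_n + \beta_n D_{n-1}},
\qquad\text{so}\qquad
\bigl(D_n|D_n\gamma - N_n|\bigr)^{-1} = \frac{D_n + \beta_n D_{n-1}}{\beta_n D_n} = \frac{1}{\beta_n} + \frac{D_{n-1}}{D_n}.
\]

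Next I would identify the two terms on the right with the quantities in the statement. The reciprocal $1/\beta_n$ of $\beta_n = [0;q_{n+1},q_{n+2},\dots]$ is exactly $[q_{n+1};q_{n+2},\dots]$, which in the paper's notation is $q_{+,n}$. For the second term, property (iv) gives directly $D_{n-1}/D_n = [0;q_n,q_{n-1},\dots,q_1]$, which is $q_{-,n}$ (up to the leading $0$, i.e. it equals $1/[q_n;q_{n-1},\dots,q_1]$, consistent with the way $q_{-,n}$ is introduced). Substituting these two identifications yields
\[
\bigl(D_n|D_n\gamma - N_n|\bigr)^{-1} = [q_{n+1};q_{n+2},\dots] + [0;q_n,q_{n-1},\dots,q_1],
\]
which is the claimed formula.

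The computation is essentially routine once properties (i)--(iv) are invoked, so there is no serious obstacle; the only point requiring a little care is the sign bookkeeping in the numerator $D_nN_{n-1} - N_nD_{n-1}$ and making sure the absolute value is handled correctly, together with checking that the denominator $D_n + \beta_n D_{n-1}$ is indeed positive (which holds since $D_0 = q_0 \geq 0$, $D_1 \geq 1$, and all later $D_j$ are strictly positive, while $\beta_n \in (0,1)$). I would also remark that the identity holds for every $n\geq 1$ and for arbitrary irrational $\gamma$, so no periodicity or boundedness of the quotients is needed here; those hypotheses enter only in the later corollaries (e.g. Lemma~\ref{lema_periodic_csn}) where one studies the asymptotic behaviour of the $c_{s,n}$.
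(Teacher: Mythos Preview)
Your proof is correct and follows essentially the same route as the paper: start from property (iii) to write $\gamma$ in terms of $\beta_n$, use (ii) to reduce the numerator to $\pm\beta_n$, and then invoke (iv) to identify $D_{n-1}/D_n$ with the reversed continued fraction. One tiny slip in your parenthetical remark: with the paper's conventions $D_0=1$ (and $N_0=q_0$), not $D_0=q_0$, but this does not affect the argument since the positivity of $D_n+\beta_n D_{n-1}$ is immediate either way.
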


\begin{proof}
From properties (ii), (iii) and (iv) one has
$$
\left|\gamma - \frac{N_n}{D_n}\right| =  \frac{\beta_n}{D_n^2 (1+\beta_n \frac{D_{n-1}}{D_n})} = \frac{ \beta_n}{D_n^2(1+ \beta_n [0;q_n,q_{n+1},...,q_1])}.
$$
This implies the result.
\end{proof}

It is known that $\gamma \in \mathbb{R}$ is a quadratic irrational number if, and only if, its CFE is eventually periodic.
\begin{lema}  \label{lema_periodic_csn}
Let $\gamma$ be a quadratic irrational number with eventually $p$-periodic CFE. Let
$c_{s,n}=c_{s,N_n/D_n}=(N_n |D_n \gamma - N_n|)^{-1}$. Then, the sequence
of constants $\{c_{s,n} \}_{n \geq 1}$ is asymptotically $p$-periodic (as $n
\rightarrow \infty$).  
\end{lema}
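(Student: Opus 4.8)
The plan is to start from the closed formula of Lemma~\ref{lema_approximants_distance}. Combining it with property~(iv) and the notation $q_{+,n}=[q_{n+1};q_{n+2},\dots]$ introduced in the excerpt, we obtain, for every $n\ge 1$,
$$
c_{s,n}=\frac{1}{N_n\,|D_n\gamma-N_n|}=\frac{D_n}{N_n}\left(q_{+,n}+\frac{D_{n-1}}{D_n}\right).
$$
So the statement reduces to proving that each of the three factors $D_n/N_n$, $q_{+,n}$ and $D_{n-1}/D_n$ is asymptotically $p$-periodic in $n$ and stays in a fixed compact subinterval of $(0,\infty)$ (bounded away from $0$ where it matters). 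Indeed, once this is known, $c_{s,n+p}-c_{s,n}$ is a finite sum of products in which every factor is bounded and at least one factor tends to $0$, whence $c_{s,n+p}-c_{s,n}\to 0$, which is exactly the asymptotic $p$-periodicity claimed.

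I would dispose of the two ``easy'' factors first. Since the CFE of $\gamma$ is eventually $p$-periodic, fix $n_0$ with $q_{j+p}=q_j$ for all $j\ge n_0$; in particular only finitely many distinct partial quotients occur, and we put $Q=\max_j q_j<\infty$. For $n\ge n_0$ the tail satisfies $q_{+,n+p}=q_{+,n}$ \emph{exactly}, and $1\le q_{+,n}=q_{n+1}+[0;q_{n+2},\dots]\le Q+1$. For the ratio of numerators, the standard convergence of the continued fraction gives $N_n/D_n\to\gamma\ne 0$, hence $D_n/N_n\to 1/\gamma$; a convergent sequence is trivially asymptotically $p$-periodic, bounded, and (since $N_n/D_n$ is bounded) bounded away from $0$.

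The main point is the ratio of denominators $\rho_n:=D_{n-1}/D_n$. By property~(iv), $\rho_n=[0;q_n,q_{n-1},\dots,q_1]$, so $1/(Q+1)\le\rho_n\le 1$; equivalently, from recurrence~(i), $\rho_n=1/(q_n+\rho_{n-1})$. Fix $n\ge n_0$ and compare $\rho_n=[0;q_n,q_{n-1},\dots,q_1]$ with $\rho_{n+p}=[0;q_{n+p},q_{n+p-1},\dots,q_1]$: by $p$-periodicity $q_{n+p-j}=q_{n-j}$ for $0\le j\le n-n_0$, so these two finite continued fractions share their first $n-n_0+1$ partial quotients and therefore differ by $\mathcal{O}(\lambda^{\,n-n_0})$ for a fixed $\lambda\in(0,1)$ (the usual geometric estimate: two continued fractions agreeing in the first $k$ partial quotients differ by $\mathcal{O}(d_k^{-2})$, with $d_k$ the $k$-th continuant, which grows at least like a Fibonacci number since all partial quotients are $\ge 1$). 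Hence $\rho_{n+p}-\rho_n\to 0$, i.e.\ $D_{n-1}/D_n$ is asymptotically $p$-periodic. (Alternatively, one can phrase this as hyperbolicity of the period map $\rho\mapsto\Phi_{[n\bmod p]}(\rho)$ of the iteration $\rho_n=1/(q_n+\rho_{n-1})$, a uniform contraction on the compact interval $[1/(Q+1),1]$ in which the $\rho_n$ eventually lie, so that $\rho_{n+kp}$ converges geometrically to its fixed point.)

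Finally I would assemble the pieces: for $n\ge n_0$, using $q_{+,n+p}=q_{+,n}$,
$$
c_{s,n+p}-c_{s,n}=\left(\frac{D_{n+p}}{N_{n+p}}-\frac{D_n}{N_n}\right)\!\left(q_{+,n}+\frac{D_{n+p-1}}{D_{n+p}}\right)+\frac{D_n}{N_n}\!\left(\frac{D_{n+p-1}}{D_{n+p}}-\frac{D_{n-1}}{D_n}\right),
$$
each bracket is bounded by the previous paragraphs and the two differences tend to $0$, whence $c_{s,n+p}-c_{s,n}\to 0$. The only genuinely non-routine ingredient is the geometric estimate for continued fractions (equivalently, the contraction of the period map) used to control $D_{n-1}/D_n$; everything else is bookkeeping with the CFE identities already recorded in the excerpt.
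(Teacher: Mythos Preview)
Your proof is correct and follows essentially the same approach as the paper: both start from Lemma~\ref{lema_approximants_distance} to write $c_{s,n}=\dfrac{D_n}{N_n}\bigl(q_{+,n}+D_{n-1}/D_n\bigr)$ (the paper phrases this as $c_{s,n}\approx(q_{+,n}+1/q_{-,n})/\gamma$), and then argue that $q_{+,n}$ is exactly $p$-periodic for large $n$, $D_n/N_n\to 1/\gamma$, and $D_{n-1}/D_n=1/q_{-,n}$ is asymptotically $p$-periodic because its finite continued fraction shares an increasing initial block with its $p$-shift. Your treatment of this last point via the geometric/contraction estimate is more explicit than the paper's one-line remark that the non-periodic tail of $q_{-,n}$ ``has small influence if $n$ is large'', but the argument is the same in substance.
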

\begin{proof}
The statement follows from the relation
$$
\left( D_n^2 \left| \gamma - \frac{N_n}{D_n} \right| \right)^{-1} = \frac{N_n}{D_n} \, c_{s,n}, 
$$
which, using the previous Lemma~\ref{lema_approximants_distance}, implies that
\begin{equation} \label{csnqpm}
c_{s,n}  \approx \frac{q_{+,n} + 1/q_{-,n}}{\gamma} (1 + \mathcal{O}(D_n^{-2})).  
\end{equation}

If $\gamma$ is a quadratic irrational number then, taking $n$ large enough, the
sequence of quotients of $q_{+,n}$ is periodic and the one of $q_{-,n}$ tends to be 
periodic, that is, its quotients are periodic except maybe some final ones that have small influence
on the value of $q_{-,n}$ if $n$ is large. This implies that $c_{s,n}$ tend to be
periodic with respect to $n$ with the same period as the CFE of $\gamma$. 
\end{proof}

In particular, for the values of $\gamma$ referred as Cases $0$, $1$, and $2$
in Section~\ref{Sec:Otherfreq} one has: 
\begin{align*}
\text{Case 0: } & c_{s,n} \rightarrow 3 + \gamma \approx 3.61803398 \text{ as } n \rightarrow \infty. \\[0.2cm]
\text{Case 1: } & \left\{c_{s,n}\right\}_n \text{ tend to be 3-periodic. One has }\\ 
                & \hspace{1cm} c_{s,n} \to 17.871271 \dots, \quad c_{s,n+1}=c_{s,n+2} \to 3.249322 \dots \text{ for } n=2 \, (\text{mod} \, 3).\\[0.2cm] 
\text{Case 2: } &\left\{c_{s,n}\right\}_n \text{ tend to be 2-periodic. One has,} \\
                & \hspace{1cm} c_{s,n} \to a \approx 1.91442978 \text { for $n$ even, and } \quad c_{s,n}\to 10 a \text{ for $n$ odd.}
\end{align*}
Then, for the $n$-th approximant of $\gamma$, say $N_n/D_n$, the corresponding maxima shown in
Fig.~\ref{othergamma} are approximated by $\Psi_M/\sqrt{c_{s,n}}$, being $\Psi_M \approx
-4.860298$, and they are located at $\nu \approx L_M/(N_n^2 c_{s,n})$, where $L_M
\approx 0.26236$. For example, in Case 2 the $4$-th visible maximum (from right
to left) shown in the bottom left panel of Fig.~\ref{othergamma} is related to
$N=1034$ and corresponds to $c_{s,n} \to 10a$. Accordingly its value is $\approx
-1.1108186876015$ and it is located at $\log_2(\nu) \approx -26.2172640940432$
in agreement with what is shown in the figure.

\subsubsection{Diophantine properties of frequencies with unbounded CFE} \label{sect_unboundedCFE}

In Case $3$ the frequency $\gamma$ has an unbounded CFE and $c_{s,n}$ tend to
infinity as $n \rightarrow \infty$. 

On the other hand, for $\gamma=e-2=[0;1,2,1,1,4,1,1,6,1,1,8,...]$ different
behaviours of the constants $c_{s,n}$ are mixed. The sequence of best
approximants $N_n/D_n$ of $\gamma=e-2$ is 
\[ 1/1, \ 2/3, \ 3/4, \ 5/7,\ 23/32,\ 28/39,\ 51/71,\ 334/465,\ 385/536,\ 719/1001, \dots \] 
The values $c_{s,n}$ associated to the approximant $N_n/D_n$ are such that the
subsequence $\{c_{s,3m+1}\}_{m \geq 0}$ tends to $\infty$ linearly with slope
$2/(3\gamma)$. The other two subsequences of $c_{s,n}$ are bounded,
being $c_{s,3m} < c_{s,3m+2}$ for all $m \geq 1$, and they both tend to $2/\gamma$.
This explains the bumps observed in Fig.~\ref{em2}.

We give further details on the Diophantine properties of the previous unbounded
CFE cases. For concreteness, we consider $\gamma_1=[0;1,2,3,4,5,6,7,8,...] \approx 0.69777465796400798200679$  and
$\gamma_2=e-2=[0;1,2,1,1,4,1,1,6,1,1,8,...]$.  As usual, to get Diophantine
approximations the idea is to look for a function $\phi(D_n)$ such that
$\phi(D_n) |D_n \gamma - N_n|$ is bounded from below. From the identity $c_{s,n}=(N_n | D_n \gamma - N_n|)^{-1}$
we can take $\phi(D_n)=N_n c_{s,n} \approx D_n \gamma c_{s,n}$, and we note
that the constants $c_{s,n}$ can be approximated from the quotients $q_n$ of
$\gamma$ using (\ref{csnqpm}).

\begin{lema} \label{Lg1}
Let $\gamma_1=[0;1,2,3,4,5,6,7,8,...]$. There exists a constant $c>0$ such that \footnote{Note that $\phi(q) < q^{\tau}$ for any $\tau>1$ if $q$ is large enough. Equivalently, $\gamma_1$ satisfies the Diophantine condition $|\gamma_1 - p/q| \geq c/q^{\tau}$ for any $\tau=2+\epsilon$, $\epsilon>0$ and a suitable constant $c=c(\epsilon)>0$.}
 \[|q \gamma_1 - p| \geq \frac{c}{\phi(q)}, \qquad \phi(q)=q \log(q)/\log(\log(q)),\]
for all $p,q \in \mathbb{Z}$ with $q\geq 3$.
\end{lema}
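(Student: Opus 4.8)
The plan is to reduce the inequality for an arbitrary denominator $q$ to the convergents $N_n/D_n$ of $\gamma_1=[0;1,2,3,\dots]$, and then to exploit the fact that the denominators $D_n$ grow super-factorially. First I would recall the classical theorem on best approximations of the second kind (see \cite{Khi64}): if $n$ is the index for which $D_n\le q<D_{n+1}$, then $|q\gamma_1-p|\ge |D_n\gamma_1-N_n|$ for every $p\in\mathbb{Z}$. Since $q\ge 3=D_2$ this index is well defined and satisfies $n\ge 2$, so it suffices (i) to bound $|D_n\gamma_1-N_n|$ from below and (ii) to bound $n$ from above in terms of $q$.

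For (i) I would apply Lemma~\ref{lema_approximants_distance}. For $\gamma_1$ the partial quotients are $q_k=k$, so $q_{+,n}=[n+1;n+2,n+3,\dots]<n+2$ and $[0;q_n,q_{n-1},\dots,q_1]<1/n<1$; hence $q_{+,n}+[0;q_n,\dots,q_1]<n+2$ for $n\ge 2$, and the lemma gives
\[
|D_n\gamma_1-N_n|=\frac{1}{D_n\,\bigl(q_{+,n}+[0;q_n,\dots,q_1]\bigr)}>\frac{1}{(n+2)\,D_n}\ \ge\ \frac{1}{(n+2)\,q},
\]
using $D_n\le q$. For the growth of $D_n$ I would use the recurrence $D_n=q_nD_{n-1}+D_{n-2}=nD_{n-1}+D_{n-2}\ge nD_{n-1}$ together with $D_2=3$; an immediate induction yields $D_n\ge \tfrac32\,n!$, and in particular $n!\le q$.

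It remains to convert $n!\le q$ into an upper bound $n+2\le C\,\log q/\log\log q$ with an absolute constant $C$, valid for all $q$ beyond some explicit threshold $Q_0$ (say $Q_0=D_8$, so that $n\ge 8$). This is a routine estimate: from $\log(n!)\ge n\log n-n+1$ and $\log n-1\ge\tfrac12\log n$ for $n\ge 8$ one gets $n\log n\le 2\log q$, and writing $x=\log q$ an elementary argument (if $n>4x/\log x$ then, since $\log y<y/2$ for $y>0$, one has $n\log n>\tfrac{4x}{\log x}\log\tfrac{4x}{\log x}\ge 2x$, a contradiction) yields $n\le 4\log q/\log\log q$, hence $n+2\le 8\log q/\log\log q$. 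Plugging this into the displayed inequality gives $|q\gamma_1-p|\ge c_1/\phi(q)$ for $q\ge Q_0$ with an absolute $c_1>0$. Finally, for the finitely many $3\le q<Q_0$ the quantity $\|q\gamma_1\|=\min_{p\in\mathbb{Z}}|q\gamma_1-p|$ is strictly positive because $\gamma_1$ is irrational (its CFE is infinite), so $c_q:=\phi(q)\,\|q\gamma_1\|>0$; taking $c=\min\bigl(c_1,\ \min_{3\le q<Q_0}c_q\bigr)$ completes the argument.

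The genuinely delicate point is this last inversion step — turning $n!\le q$ into $n\lesssim \log q/\log\log q$ with a clean constant and the correct threshold on $q$ — where one must be a little careful with the Stirling estimate and with the range where $\log\log q$ is positive and comparable to $\log q$. Everything else reduces to the continued-fraction identities already recorded in the excerpt (properties (i)--(iv) and Lemma~\ref{lema_approximants_distance}) together with the standard best-approximation theorem.
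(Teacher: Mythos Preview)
Your proof is correct and follows essentially the same route as the paper's: reduce to convergents via the best-approximation theorem, use Lemma~\ref{lema_approximants_distance} to get $|D_n\gamma_1-N_n|\asymp 1/(nD_n)$, observe $D_n\gtrsim n!$, and invert the factorial growth to bound $n$ by $\log q/\log\log q$. The only cosmetic difference is that the paper invokes a Newton--Kantorovich argument to invert $\log D_n\approx n\log n$, whereas you carry out the inversion by a direct elementary estimate with explicit constants; the structure of the argument is otherwise identical.
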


\begin{proof}
Since $q_n=n$ one has $q_{+,n} = (n+1)(1+\mathcal{O}(n^{-2}))$,
$q_{-,n}=n(1+\mathcal{O}(n^{-2}))$, and from (\ref{csnqpm}) it follows that
$c_{s,n} \gamma_1 \approx (n+1)(1+\mathcal{O}(n^{-2}))$.  To obtain an explicit
formula for $\phi(n)$ one has to relate $c_{s,n}$ with $D_n$. Note that $D_n =
D_{n-1} q_{-,n}$ and then $D_n$ equals $n!$ times a finite product of terms
that are convergent when $n \rightarrow \infty$ (because $\sum_{n\geq1}
n^{-2}=\pi^2/6$).  Stirling's approximation provides the relation $$\log(D_n) =
n \log n (1+\mathcal{O}(1/\log(n))),$$ which can be solved by Newton iteration
(note that the Newton-Kantorovich theorem guarantees that the iteration
starting with $n_0=\log(D_n)/ \log(\log(D_n))$ converges provided $D_n$ is
large enough) to obtain
$$
n = \frac{\log(D_n)}{\log(\log(D_n))} \left( 1 + \mathcal{O}\left( \frac{\log(\log(\log(D_n)))}{\log(\log(D_n))}\right)\right).
$$
We conclude that $\phi(D_n)=D_n \log(D_n)/\log(\log(D_n))$ ensures a positive lower
bound of the scaled difference $\phi(D_n) | D_n \gamma_1 - N_n|$.  If $D_n \leq q < D_{n+1}$ then  $|q-\gamma_1
p| \geq | D_n \gamma_1 - N_n| \geq c/\phi(D_n) \geq c/\phi(q)$.
Changing the constant $c$ we extend the inequality for $q \geq 3$.
\end{proof}

\begin{remark} 
Numerically we observe that $D_n/n! \rightarrow 2.2796$ as $n \rightarrow \infty$.
Let $\Pi_n= \phi(D_n)|D_n \gamma - N_n|$. The sequence $\{\Pi_n\}_{n > 1}$ (for
$n=1$ it is not defined!) reaches a minimum values for $n=6$ (that is when evaluated on the 
approximant $N_6/D_6=972/1393$ for which on has $\Pi_6
\approx 0.50201173$) while uniformly increases for $n>6$. For $n=1000$ one has
$\Pi_{1000} \approx 0.68014970$ and $\Pi_n \approx 0.758203198$ for $n=10^5$. The function $g(n)=|D_n\gamma_1 - N_n|n D_n
$ is such that $g(1)=1-\gamma_1$, it is monotonically increasing and it tends to one as
$1-\mathcal{O}(1/n)$ when $n\rightarrow \infty$ as expected.  
\end{remark}

We proceed similarly for $\gamma=\gamma_2=e-2$. The following lemma asserts
that $\gamma_2$ has similar Diophantine properties to the ones described in Lemma~\ref{Lg1} for
$\gamma_1$.
\begin{lema} \label{Lg2}
There exists a constant $c>0$ such that
\[ |q \gamma_2 - p| \geq c/\phi(q), \quad  \phi(q)=q \log(q)/\log(\log(q)),\]
for all $p,q \in \mathbb{Z}$ with $q \geq 3$.
\end{lema}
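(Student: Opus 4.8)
The plan is to mirror the proof of Lemma~\ref{Lg1}, adapting the bookkeeping to the continued fraction expansion of $\gamma_2 = e-2 = [0;1,2,1,1,4,1,1,6,1,1,8,\dots]$, so that $q_{3k-1}=2k$ for all $k\geq 1$ and $q_j=1$ otherwise. First I would invoke (\ref{csnqpm}), which gives $c_{s,n}\gamma_2\approx q_{+,n}+1/q_{-,n}$ with $q_{+,n}=[q_{n+1};q_{n+2},\dots]$. Since $q_{n+1}$ is bounded unless $n+1\equiv 2\ (\mathrm{mod}\ 3)$, the constants $c_{s,n}$ are $\mathcal{O}(1)$ except along the subsequence $n=3m+1$, where $q_{n+1}=2(m+1)$ and hence $c_{s,n}\gamma_2=2(m+1)+\mathcal{O}(1)=\tfrac{2}{3}n+\mathcal{O}(1)$. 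Exactly as in Lemma~\ref{Lg1}, the lower bound is then governed by the best approximants with $n=3m+1$.

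Next I would estimate $D_n$. From properties (i) and (iv) one has $D_n/D_{n-1}=[q_n;q_{n-1},\dots,q_1]=q_{-,n}$, so $D_n=\prod_{j=1}^{n}q_{-,j}$. Since every partial quotient is $\geq 1$, each factor with $q_j=1$ lies in $[1,2)$, and there are $\mathcal{O}(n)$ of them, so their product is $\exp(\mathcal{O}(n))$; the $\sim n/3$ factors with $q_j=2k$, one for each $k=1,\dots,\sim n/3$, satisfy $q_{-,3k-1}\in[2k,2k+1)$, so their product is $2^{\lfloor n/3\rfloor}\,\lfloor n/3\rfloor!$ times a factor of subexponential size. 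By Stirling this yields $\log D_n=\tfrac{n}{3}\log n\,(1+\mathcal{O}(1/\log n))$, the exact analogue of $\log D_n=n\log n\,(1+o(1))$ in Lemma~\ref{Lg1} but with coefficient $1/3$; in particular $\log\log D_n=\log n\,(1+o(1))$. Inverting this relation by Newton iteration (Newton--Kantorovich applies as in Lemma~\ref{Lg1}, starting from $n_0=3\log D_n/\log\log D_n$) gives $n=\dfrac{3\log D_n}{\log\log D_n}(1+o(1))$.

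Combining the two estimates, along $n=3m+1$ we have $c_{s,n}\gamma_2\approx\tfrac{2}{3}n\approx 2\log D_n/\log\log D_n$, so with $\phi(q)=q\log q/\log\log q$ one gets $\phi(D_n)=\tfrac{1}{2}\,D_n c_{s,n}\gamma_2\,(1+o(1))$, and therefore, using $c_{s,n}=(N_n|D_n\gamma_2-N_n|)^{-1}$ together with $N_n/D_n\to\gamma_2$,
\[
\phi(D_n)\,|D_n\gamma_2-N_n|=\tfrac{1}{2}\,\frac{D_n\gamma_2}{N_n}\,(1+o(1))\longrightarrow\tfrac{1}{2},
\]
so $\phi(D_n)|D_n\gamma_2-N_n|$ is bounded below by a positive constant along $n=3m+1$; for the remaining $n$ one has $c_{s,n}=\mathcal{O}(1)$, hence $|D_n\gamma_2-N_n|\gtrsim 1/D_n\gg 1/\phi(D_n)$ trivially. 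Finally, for arbitrary $p,q\in\mathbb{Z}$ with $D_n\le q<D_{n+1}$ the best approximation property of convergents gives $|q\gamma_2-p|\ge|D_n\gamma_2-N_n|\ge c/\phi(D_n)\ge c/\phi(q)$, since $\phi$ is increasing; changing the constant $c$ extends the bound to all $q\ge 3$.

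I expect the main obstacle to be making the estimate $\log D_n=\tfrac{n}{3}\log n\,(1+\mathcal{O}(1/\log n))$ uniform enough: in contrast with Lemma~\ref{Lg1}, the partial quotients are not strictly increasing but follow a period-three pattern with a single large entry, so one must control the product of the $\mathcal{O}(1)$ factors $q_{-,j}$ and the Wallis-type corrections coming from $q_{-,3k-1}\in[2k,2k+1)$, checking that together they contribute only $\exp(o(n\log n))$ and are thus absorbed into the relative error; only then does the Newton inversion produce exactly $\phi(q)=q\log q/\log\log q$ rather than an extra bounded oscillating factor.
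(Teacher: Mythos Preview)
Your proposal is correct and follows essentially the same route as the paper: estimate $c_{s,n}$ via (\ref{csnqpm}) to see that it is $\mathcal{O}(1)$ except along $n\equiv 1\ (\mathrm{mod}\ 3)$ where $c_{s,n}\gamma_2\sim \tfrac{2}{3}n$, then show $\log D_n=\tfrac{n}{3}\log n\,(1+o(1))$, invert by Newton, and extend to general $q$ by the best-approximation property. The only notable difference is in the $D_n$ estimate: the paper derives the three-step recurrence $D_{3j+2}\approx 4(j+\tfrac{3}{2})D_{3j-1}$ to get the closed form $D_n\approx 4^{n/3}\Gamma(n/3+11/6)$, whereas you use the product $D_n=\prod_{j\le n}q_{-,j}$ directly and split off the $\sim 2n/3$ factors in $(1,2)$ from the $\sim n/3$ factors near $2k$; both give the same leading asymptotic. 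Your explicit identification of the limit $\phi(D_n)\,|D_n\gamma_2-N_n|\to\tfrac{1}{2}$ along $n=3m+1$ is a nice sharpening, consistent with the paper's Fig.~\ref{phiq}.
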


\begin{proof}
We have $q_n=2(n+1)/3$ if $n=2 \text{ (mod 3)}$ and $q_n=1$ otherwise. We consider $k=3j+2$ below.

One has $q_{+,3j+1}=[q_{3j+2};1,1,q_{+,3j+4}]$ with $q_{3j+2}=2(j+1)$ and $q_{+,3j+4}=\mathcal{O}(j)$,
which implies that $q_{+,3j+1}=(2(j+1)+1/2)(1+\mathcal{O}(j^{-2}))=(2(k+1)/3+1/2)(1+\mathcal{O}(k^{-2}))$.
From the relation $q_{+,3j}=q_{3j+1}+1/q_{+,3j+1}$ it follows that $q_{+,3j}=(1+3/(2k))(1+\mathcal{O}(k^{-2}))$.
Using that $q_{+,3j+2}=[1;1,q_{+,3j+4}]$ where $q_{+,3j+4}=(2(j+2)+1/2)(1+\mathcal{O}(j^{-2}))$ one obtains
$q_{+,3j+2}=(2-3/(2k))(1+\mathcal{O}(k^{-2}))$.

Moreover, since $q_{-,3j+2}=[2(j+1);1,1,q_{-,3j-1}]$ and
$q_{-,3j-1}=\mathcal{O}(j)$,  one gets
$q_{-,3j+2}=(2(k+1)/3+1/2)(1+\mathcal{O}(k^{-2}))$.  From
$q_{-,3j+1}=[q_{3j+1}; q_{3j}, q_{-,3j-1}]$, using that
$q_{-,3j-1}=2j(1+\mathcal{O}(j^{-1}))$, it follows that
$q_{-,3j+1}=(2-3/(2k))(1+\mathcal{O}(k^{-2}))$. Since
$q_{-,3j}=[q_{3j};q_{-,3j-1}]$ one obtains
$q_{-,3j}=(1+3/(2k))(1+\mathcal{O}(k^{-2}))$.

Summarizing, for $k=2 \text{(mod 3)}$, we obtain 
\[
 \begin{array}{lll}
 q_{+,k-2}  \approx 1+\frac{3}{2k}, \qquad & q_{+,k-1} \approx \frac{2(k+1)}{3}+ \frac{1}{2}, \qquad & q_{+,k} \approx 2-\frac{3}{2k}, \\
 q_{-,k-2}  \approx 1+\frac{3}{2k}, & q_{-,k-1} \approx 2-\frac{3}{2k}, & q_{-,k} \approx \frac{2(k+1)}{3}+ \frac{1}{2}, 
\end{array}
\]
with a relative error $\mathcal{O}(k^{-2})$ in all cases.
Using (\ref{csnqpm}) and the previous estimates we get $c_{s,k-2}\gamma_2 \approx 2$,
$c_{s,k-1}\gamma_2 \approx 2(k+1)/3+1$ and $c_{s,k} \gamma_2 \approx 2$. We conclude that
$c_{s,n}, n\geq 1$ is, at most, $\mathcal{O}(n)$.

Next, we look to the denominators $D_n$. We use the properties listed in the
items before Lemma~\ref{lema_approximants_distance}.  The recurrence $D_n=q_n D_{n-1} +
D_{n-2}$, implies that $D_{3j+2} = 2(j+1)( D_{3j}+D_{3j-1})+D_{3j}$ and, using
the identity $D_{3j}=D_{3j-1} q_{-,3j}$, it simplifies to 
\[ D_{3j+2}= \left( 2(j+1)(1+q_{-,3j})+q_{-,3j} \right) D_{3j-1}. \]
Since $q_{-,3j}=[q_{3j};q_{-,3j-1}]=(1+1/(2j))(1+\mathcal{O}(j^{-2}))$, one obtains
\[ D_{3j+2} = 4 \left(j + \frac{3}{2} \right) D_{3j-1} (1+\mathcal{O}(j^{-2})) = \frac{4k+10}{3} \, D_{3j-1} (1+\mathcal{O}(k^{-2})) .\]
From this recurrence we obtain $D_{3j+2}\approx 4^j \Gamma(j+5/2)$ or, equivalently,
\begin{equation} \label{Dn2}
 D_n \approx  4^{n/3} \Gamma(n/3+11/6), \qquad \text{for $n$ such that $n=2 (\text{mod }3$).} 
\end{equation}
Then, Stirling's approximation gives
\[ 3 \log D_n \approx n \log n - an + 4\log n, \qquad a=1+\log 3/4.\]
As in Lemma~\ref{Lg1}, we take $n_0= 3 \log(D_n)/\log(\log(D_n))$ and solve
this relation by (Newton) iteration (the convergence follows from the
Newton-Kantorovich theorem). We obtain 
\[ n = \frac{3 \log(D_n)}{\log(\log(D_n))} \left( 1 + \mathcal{O}\left( \frac{\log(\log(\log(D_n)))}{\log(\log(D_n))}\right)\right).\]
Now the proof finishes as the proof of Lemma~\ref{Lg1}. Since
$c_{s,n}=\mathcal{O}(n)$ then one takes 
$\phi(q)=q \log(q)/\log(\log(q))$ to have values
of $|q \gamma_2-p| \phi(q)$ bounded from below.  \end{proof}

We show in Fig.~\ref{phiq} left the values of $\xi_n=|D_n \gamma_2 -
N_n| \phi(D_n)$ as a function of $n$. In the right plot we only show the local
minima of $\xi_n$ (i.e. those corresponding
to  $n=1 (\text{mod }3$)). According to the theoretical predictions, the
minimum values tend to a constant.

\begin{figure}[ht]
\begin{center}
\begin{tabular}{cc}
\hspace{-4mm}\epsfig{file=./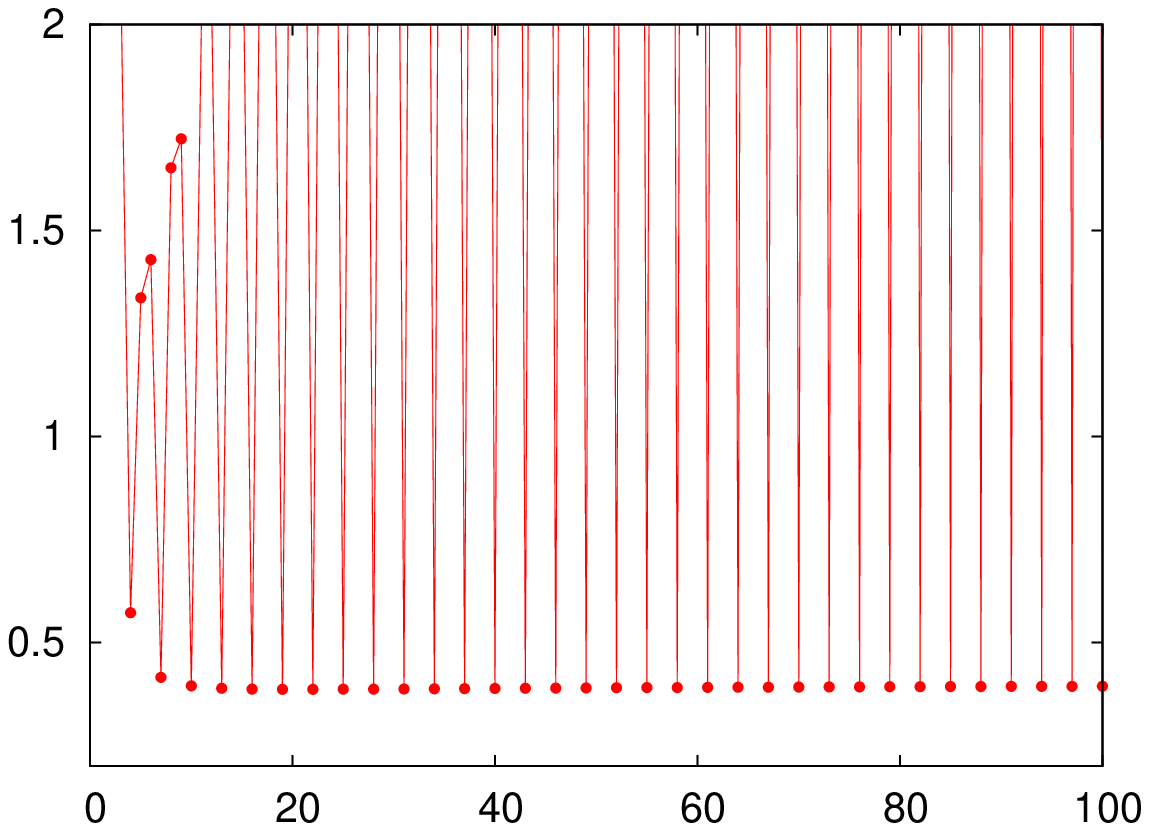,width=0.5\textwidth} &
\hspace{-6mm}\epsfig{file=./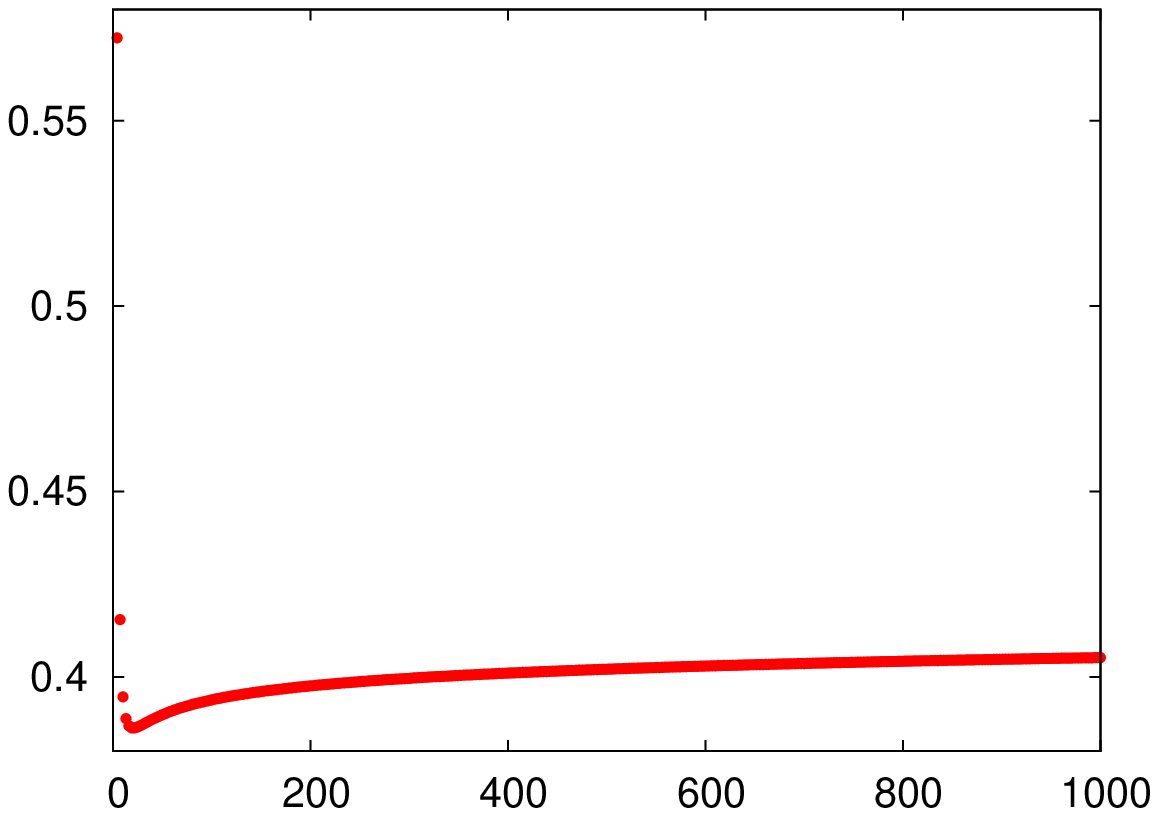,width=0.5\textwidth}
\end{tabular}
\end{center}
\vspace{-8mm}
\caption{Left: We represent $\xi_n=|D_n \gamma_2-N_n|\phi(D_n)$ as a function of $n$.
In the right plot we focus on the behaviour of the minima  (up to $n=1000$).
}
\label{phiq}
\end{figure}

\begin{remark}\label{rk7p3}
We have seen that the frequencies $\gamma_1$ and $\gamma_2$ satisfy a condition of the form
$|q \gamma -p| \geq c \log(\log(q))/(q \log(q))$, $q\geq 3$. We remark that the set
of irrational numbers $\gamma$ satisfying such a type of condition for some $c>0$ has zero measure.
By contrast, the set of irrational numbers $\gamma$ satisfying a
condition of the form $|q\gamma - p|\geq c/\phi(q)$ for $q\geq q_0$
for some $c>0$ and such that $1/\phi(q)$ is integrable in the range
$[q_0,\infty)$, has total measure. We refer to \cite{Khi64} for further
details on measure aspects of CFE. As examples we can consider numbers
of the form $\gamma=[0;1^m,2^m,3^m,4^m,5^m,\ldots]$ for
$1<m\in\mathbb{Z}_+$. They satisfy
$|q\gamma - p|\geq c (\log(\log(q)))^m/(q(\log(q))^m)$ for $q\geq 3$
and a positive constant $c$.
\end{remark}

\begin{remark} \label{rk7p4} 
It follows from the reasoning in Remark~\ref{remark_k1k2} that if $\gamma$ satisfies a Diophantine
condition of the form $|q \gamma - p| \geq c |q|^{-\tau}$, $\tau \geq 1$, $c>0$,
then the exponentially small part of the splitting is expected to have an exponent
of the form $-C/\nu^{1/(\tau+1)}$ with $C>0$. A similar reasoning shows that if $\gamma$ satisfies
a condition of the form $|q \gamma -p| \geq c \log(\log(q))/(q \log(q))$, $q\geq 3$, $c>0$,
as the ones considered in this section, then the exponent of the exponentially
small part becomes 
\[ \left(\frac{-C\sqrt{\log(\log(\sqrt{1/\nu}))}}{\sqrt{\nu\log(\sqrt{1/\nu})}}
\right)(1+o(1)),\quad C>0,\]
where the $o(1)$ terms are bounded by $\log(\log(1/\nu))/\log(1/\nu)$.
Consequently, if one represents $\log(|\Delta F_1^{1}|/\epsilon)$ multiplied by
$\sqrt{\nu \log(1/\nu)/\log(\log(1/\nu))}$ (instead of by $\sqrt{\nu}$ as a
function of $\log_2(\nu)$ as we did in Fig.~\ref{em2} and in Case 3 of
Fig.~\ref{othergamma}), then the maxima tend to a constant value. In the
multiplying factor we have neglected $\log(2)$ in front of $\log(\sqrt{1/\nu})$.
\end{remark}

%A similar behavior of the exponentially small part holds in the cases mentioned
%in Remark~\ref{rk7p3} including $\log$ terms.

\section{Conclusions and future work} \label{Sec:Conclusion}

In this work we have investigated the asymptotic properties of the splitting of
the invariant manifolds emanating from a complex-saddle fixed point of a 2-dof
Hamiltonian system $H_0(\bx,\by)$ undergoing a Hamiltonian-Hopf bifurcation (at
$\nu=0$) when acting a periodic forcing on the system. We have obtained
detailed information of the exponentially small behaviour, describing the
changes of dominant harmonic as $\nu \rightarrow 0$. As has been discussed through
the paper, for the concrete example considered, when using Poincar\'e-Melnikov method it remains to bound the effect of
\begin{itemize}
\item the terms in the first order Melnikov approximation not related to best
approximants and,
\item the non-dominant terms in the splitting function, bounding the effect of
higher order Melnikov approximations.
\end{itemize}

In any case, the detailed description presented in this paper takes advantage
of the concrete properties of the explicit periodic perturbation $\epsilon
H_1(\bx,\by,t)$ considered. In this sense, an interesting topic for future
works would be to consider other perturbations $H_1(\bx,\by,t)$, for example:
\begin{itemize}
\item perturbations having a finite number of harmonics (then higher order
Melnikov analysis could be required to analyse the changes of dominant harmonic).
\item perturbations leading to a periodic orbit from the fixed point of the
system.
\end{itemize}

As a consequence of the splitting of the invariant manifolds, in a
neighbourhood of the stable/unstable invariant manifolds there is a region
where rich dynamics appears. A desirable tool to investigate this dynamics
would be a suitable return map adapted to this problem. Such a return map
depends on two key ingredients: the return time to a suitable Poincar\'e
section and the splitting function. This is an interesting problem, motivated
by the slow diffusive expected properties (see Appendix~\ref{split-difusion}),
that we postpone to study in future works. Concretely, it involves:

\begin{itemize}
\item to construct a 4D separatrix map adapted to this problem. As said, this
requires not only the splitting function (see Section~\ref{Sec:Splitting}) but
also the passage time close to the complex-saddle point, and

\item to provide a description of the geometry of the phase space (resonance
web) and analyse the diffusive properties of the model. Note, however, that 
to observe the asymptotic behaviour requires very small values of $\nu$, outside
the range of interest of any physical application.
\end{itemize}

Finally, we also note that the case considered is somehow an intermediate
case between the 2-dof Hamiltonian case (in which the splitting behaves as the one
of a periodic perturbation of an integrable system) and the splitting of the separatrices
for a family of 4D symplectic maps undergoing a Hamiltonian-Hopf bifurcation (in which the
perturbation is not explicit). Then, a natural continuation of this work would
be to consider the analogous Hamiltonian-Hopf bifurcation for 4D symplectic
maps and to study the splitting of the invariant manifolds and the consequences
in the diffusion properties.

\appendix

\section{Autonomous perturbation of the system} \label{autonomous}

In this appendix we study the effect of an autonomous entire perturbation of $H_0$ in
(\ref{H0}) of the form
$$
H(\bx,\by) = H_0(\bx,\by) + \epsilon H_1(\bx,\by),
$$
where $H_1(\bx,\by) = \sum_{k_1,k_2,l_1,l_2} x_1^{k_1} x_2^{k_2} y_1^{l_1} y_2^{l_2}$, and where the sum
is considered over a finite number of indices $k_1,k_2,l_1,l_2 \in \mathbb{N}$.
We recall that $H_0$ depends on $\nu$.
We assume that the perturbation keeps zero as an equilibrium point.
In this case, $H(\bx,\by)$ is a first integral and both $W^{u/s}(\bf 0)$ lie on
$H=0$.  Consequently, the splitting can be measured by the variation of
$F_1=\Gamma_1(\bx,\by)=x_1 y_2 - x_2 y_1$ which is a first integral of the
unperturbed system.

Let ${\bf p_0}$ be a point in the 2-dimensional homoclinic connection of the
unperturbed system given by (\ref{homoH0}) with $t=0$
and angle $\psi_0$. We identify ${\bf p}_0$ with $\psi_0$.  Denote by
$\varphi(t,{\bf p_0})$ the unperturbed solution starting at ${\bf p_0}$.
The first-order Melnikov function to measure the variation of $F_1$ is given
by
$$
M_1(\psi_0)= \int_{-\infty}^{\infty} \{\Gamma_1(\varphi(t,{\bf p_0})),H_1(\varphi(t,{\bf p_0}))\} \ dt = \int_{-\infty}^{\infty} (D\Gamma_1 \cdot \mathbb{J} DH_1^\top)_{|\varphi(t,{\bf p_0})} \ dt,
$$
see \cite{Rob96} for details of the Melnikov method in this context.
By linearity it is enough to consider separately the effect of every individual monomial, i.e.
$H_1(x_1,x_2,y_1,y_2)=x_1^{k_1} x_2^{k_2} y_1^{l_1} y_2^{l_2}$.  Let
$r=k_1+k_2+l_1+l_2$ be the degree of the monomial. One has
$$
M_1(\psi_0)= \int_{-\infty}^{\infty}  \left[ m \cos^{m-1}(\psi) \sin^{n+1}(\psi)  - n  \cos^{m+1}(\psi) \sin^{n-1}(\psi) \right] \ R(t) \ dt,
$$
where $R(t)=R_1(t)^k R_2(t)^l$, $k=k_1+k_2$, $l=l_1+l_2$, $m=k_1+l_1$, $n=k_2+l_2$ and $\psi=\psi(t)=t+\psi_0$, $\psi_0 \in \mathbb{R}$,
and $R_1(t),R_2(t)$ as given in Section~\ref{sec3p1}. 

We consider first the case $r$ odd. For the expansions below, we introduce
$$
C(\hat{n},\hat{m}) := \sum_{s=0}^{\hat{n}} (-1)^{s} \binom{\hat{n}}{s} \binom{\hat{m}}{(r+1)/2-s} = (-1)^{\hat{n}} \sum_{s=0}^{\hat{n}} (-1)^{s} \binom{\hat{n}}{s} \binom{\hat{m}}{(r-1)/2-s},
$$
where $r = \hat{m} + \hat{n}$ and where the equality follows from Pascal's rule.
Here, if $b>a$, we assume $\binom{a}{b}=0$.
Consequently, if one considers just the contribution of the dominant
first-order harmonic one has
$$
M_1(\psi_0) \approx \frac{m C_1 + n C_2}{2^{r-1} }  \int_{-\infty}^{\infty}  \frac{(e^{i \psi} - (-1)^n e^{-i\psi})}{2 i^{n-1}} R(t) \ dt, 
$$
where $C_1=C(n+1,m-1)$  and $C_2= C(n-1,m+1)$. We note that $m C_1 + n C_2 \neq
0$. The evaluation of the previous integral reduces to a linear combination
(with coefficients depending on $\psi_0$) of integrals of the form 
$$
\int_{-\infty}^{\infty}  \sin(t) \sinh^k (\nu t) \cosh^{-(2k+l)} (\nu t)  \ dt
\quad \text{ and } \quad 
\int_{-\infty}^{\infty}  \cos(t) \sinh^k (\nu t) \cosh^{-(2k+l)} (\nu t)  \ dt.
$$
One of the two previous integrals vanishes (depending on the parity of $k$).
The other can be evaluated by residues and one obtains
$$
M_1(\psi_0) \approx A \frac{e^{-\frac{\pi}{2 \nu}}}{\nu^{r+k}},
$$
where $A=A(\psi_0)$ is a suitable constant depending on $C_1$, $C_2$ and either
$\sin(\psi_0)$ or $\cos(\psi_0)$.

The case $r$ even can be handled similarly but, in this case, the dominant
harmonic is the second order one. One obtains
$$
M_1(\psi_0) \approx \tilde{A} \frac{e^{-\frac{\pi}{\nu}}}{\nu^{r+k}},
$$
for a suitable constant $\tilde{A}=\tilde{A}(\psi_0)$. Note that for a fixed value of $\nu$ the splitting
size is expected to be much smaller when $r$ is even than when it is odd.

The coefficient $A$ and $\tilde{A}$ above follow for a simple monomial but in the case of a polynomial
one can have cancellations.

\begin{remark}
\small
If one uses $F_2=\Gamma_2 - \Gamma_3 + \Gamma_3^2$ instead of $F_1=\Gamma_1$ to measure the splitting then
one obtains
$$
M_2(\psi_0) = \mathcal{O}\left(\frac{1}{\nu} M_1(\psi_0)\right) =  \mathcal{O}\left(\frac{A}{\nu^{r+k+1}} \ e^{- \frac{ \pi}{ 2 \nu }} \right).
$$
In general the prefactor depends on the first integral we use.
\end{remark}

{\bf Example.} If $H(\bx,\by)=H_0(\bx,\by)+\epsilon y_1^5$ (i.e. $r=5$, $k=0$, $l=5$, $m=5$, $n=0$)
then
$$
M(\psi_0)= \int_{-\infty}^{+\infty} 5 \ \cos^4 \psi \  \sin \psi \ R_2(t)^5 \, dt = \sum_{j=0}^{5} A_{j,0,5}(\psi_0) I_{0,5}(j), 
$$
where
$$
I_{0,5}(j)= \int_{-\infty}^{+\infty} \ \sin^{j}t \ \cos^{5-j} t \ \frac{1}{\cosh^{5} (\nu t)} \, dt ,
$$
and  $A_{0,0,5} = s_0^4 c_0$,
$$
A_{j,0,5}(\psi_0) =  (-1)^{j} 20 \sqrt{2}  \left[ \binom{4}{j} s_0^{j-1} c_0^{j+1} + \binom{4}{j-1} s_0^{5-j} c_0^j \right], \quad 1 \leq j \leq 4,
$$
and  $A_{5,0,5} = c_0^4 s_0,$ being $s_0=\sin(\psi_0)$ and $c_0= \cos(\psi_0)$.
It is easier to look directly for the contribution of the first harmonic, we know that it is non-zero because $r=5$ is odd.
One has $\cos^4  \psi
\sin \psi = (\sin \psi)/8 + {\tt hoh}$, where ${\tt hoh}$ denotes the terms with
higher order harmonics. Writting $\sin(\psi)=\sin t \cos \psi_0 + \cos t \sin \psi_0$, one
sees that only the term in $\cos t \sin \psi_0 \cosh^{-5} (\nu t)$ contributes to the Melnikov integral, which is reduced
to
\begin{equation} \label{formexautonom}
 \frac{5}{32 \sqrt{2}}  \sin(\psi_0) \int_{-\infty}^{+\infty} \frac{\cos t}{\cosh^5 (\nu
t)} dt =  \frac{5 \pi \sin \psi_0}{384 \sqrt{2}} \ \frac{1}{ \nu^5} \ e^{-\frac{\pi}{2 \nu}} \ (1+ O(\nu^2)).
\end{equation}

We check the results by a direct numerical computation of the splitting between $W^u$ and
$W^s$ in terms of $F_1=\Gamma_1$. We consider $\epsilon=10^{-4}$.  The
invariant manifolds of the unperturbed system ($\epsilon=0$) intersect the
Poincar\'e section $\Sigma=\max \{y_1^2 + y_2^2\}$ in the curve $x_1=0$, $y_1^2
+ y_2^2=2$. These manifolds are contained in the level set $F_1^{-1}(0)$ (and
$F_2^{-1}(0)$). For $\epsilon \neq 0$, this is no longer true because of the
splitting. For different values of $\nu$, we propagate $N=100$ initial
conditions on the linear approximation at the origin of the invariant manifold
$W^{u}$ (using quadruple precision) up to $\Sigma$ and we evaluate $\Gamma_1$
on the reached points. We repeat the process for $N=100$ points on $W^s$.
Concretely, we select the initial conditions as follows: given a mesh of
equidistributed angles $\psi_0 \in [0,2\pi)$ and fixed $R_2=10^{-12}$, we set
$R_1=R_2(1-R_2^2/2)$ and we take $y_1^u=R_2 \cos(\psi_0), \ y_2^u=R_2
\sin(\psi_0), \ x_1^u=R_1 \cos(\psi_0),  \ x_2^u=R_1 \sin(\psi_0)$ as initial
condition on $W^u$, and $y_1^s=y_1^u, \ y_2^s=y_2^u, \ x_1^s=-x_1^u, \ x_2^s=
-x_2^u$, as initial condition on $W^s$.

Let $\theta=\arctan(y_2/y_1)$. To look for the behaviour of the splitting with
respect to $\nu$, we fit $\Gamma_1(W^u)$ (resp. $\Gamma_1(W^s)$), evaluated on
the invariant manifolds obtained by numerical integration in $\Sigma$,  by a function of the form
$\Gamma_1(\theta)= \sum_{k=1}^{6} a_{k}^u (\nu) e^{i k \theta}$ (resp. with
$a_k^s$ for $W^s$). Then we compute the splitting as the real part of the difference $\Delta
\Gamma=\Gamma_1(W^u)-\Gamma_1(W^s) = \sum_{k=1}^{6} a_{k} (\nu) e^{i k
\theta}$, where $a_k=a_k^u -a_k^s$, to obtain the amplitude of the first
harmonics of the splitting in terms of $\Gamma_1$. The results are shown in
Fig.~\ref{autonomosplit}.  A numerical fit shows quite a good agreement with
the predicted formula (\ref{formexautonom}).

\begin{figure} \begin{center}
\epsfig{file=./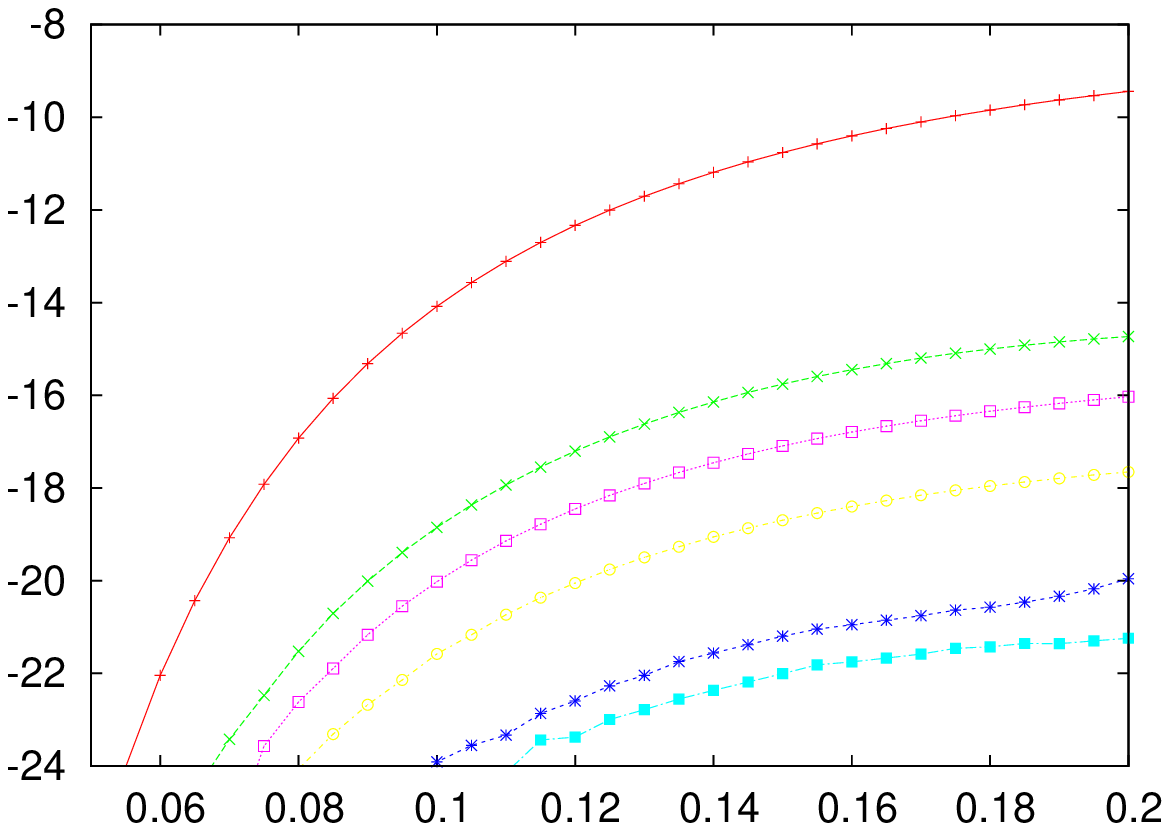,width=75mm,angle=0}
\epsfig{file=./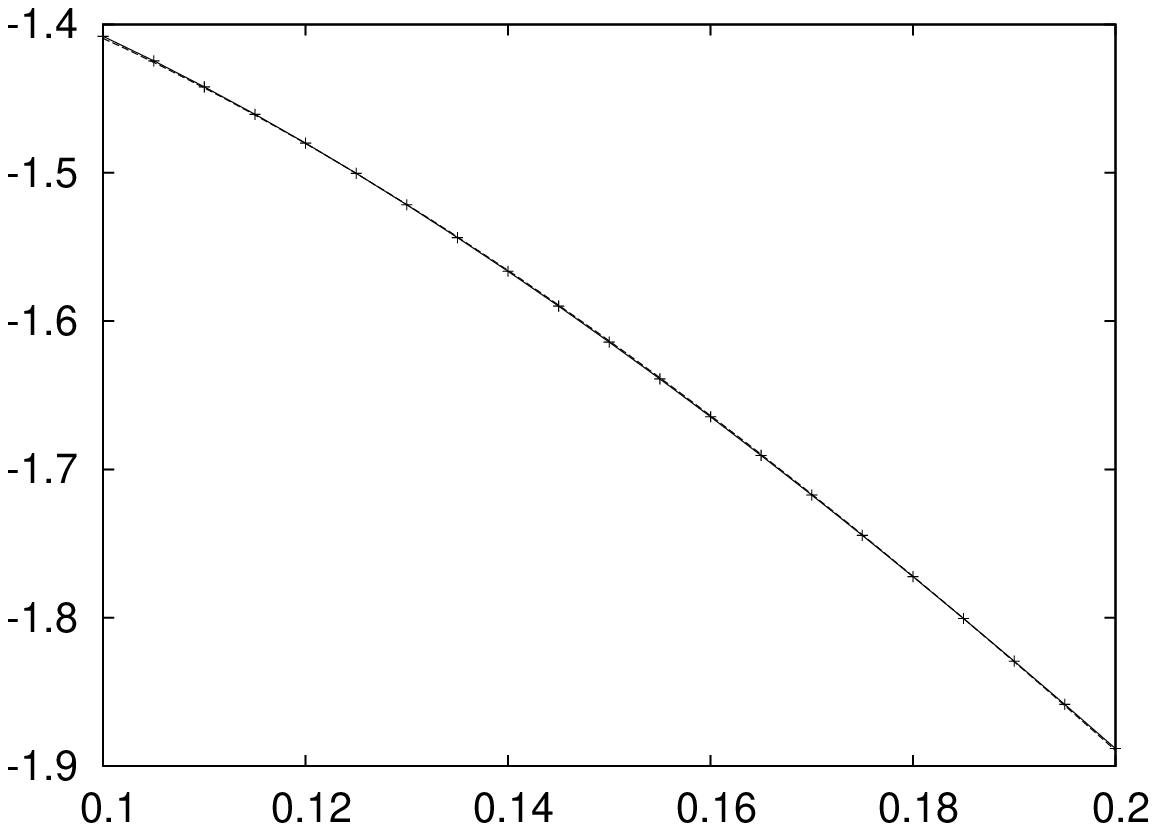,width=75mm,angle=0}
\caption{Left: We represent the amplitude $A_i$, $i=1,...,6$, of the main 6
harmonics of the difference $\Delta \Gamma=\Gamma_1(W^u)- \Gamma_1(W^s)$ with
respect to $\nu$. For $\nu=0.2$ the lines correspond, from top to bottom, to the
$i$-th harmonic ordered as follows: $i=1,2,4,6,3,5$. Right: A numerical fit of
$\nu \log(A_1)$ as a function of $\nu$ by a function $g(x)=a x+b x \log(x)+ c + d x^2
$. One obtains $c\approx -1.58898$ and $b\approx -5.51$ if fitting in the
interval shown. If one fixes $c=\pi/2$ then $b=-5.256$.  }
\label{autonomosplit}
\end{center}
\end{figure}

\section{An example: a periodic perturbation of the Duffing equation} \label{exempleduffing}

We consider a simple example to illustrate that the sum of all terms in a
series expansion can change the dominant exponent in the Melnikov function.

Our model is given by $H(x,y,t)=H_0(x,y)+\epsilon H_1(x,y,t)$ where
$$
H_0(x,y)=\frac{y^2}{2}-\frac{x^2}{2}+\frac{x^4}{4},
$$
is the so-called Duffing Hamiltonian, and
$$
H_1(x,y,t)=\frac{x^2}{d-x} \sin(\omega t) = f(x) \sin(\omega t),
$$
with $d > \sqrt{2}$. The unperturbed system $H_0(x,y)$ possesses a homoclinic
orbit $\varphi(t)=(x(t),y(t))=(\sqrt{2}/ \cosh(t),-\sqrt{2} \tanh(t) / \cosh(t))$.

This is a regular perturbation problem and hence the splitting of the invariant
manifolds for $\epsilon>0$ is expected to be $\mathcal{O}(\epsilon)$.
We consider the Melnikov function $M(\alpha)$ to compute the first order in
$\epsilon$ of this splitting. It is given by
\begin{align*}
M(\alpha)=& \int_{-\infty}^{+\infty} \{ H_0,H_1\}(\varphi(t),t+\alpha) \, dt= -\int_{-\infty}^{+\infty} y \frac{d}{dx}\left( \frac{x^2}{d-x} \right) \sin(\omega t) (\varphi(t),t+\alpha) \, dt \\ 
         =& \omega \cos(\omega \alpha) I_c - \omega \sin( \omega \alpha) I_s,
\end{align*}
where
$$
I_c =  \int_{-\infty}^{+\infty} \frac{x^2(t)}{d-x(t)} \cos(\omega t)\, dt, \qquad  I_s =  \int_{-\infty}^{+\infty} \frac{x^2(t)}{d-x(t)} \sin(\omega t)\, dt.
$$
We have that $I_s \equiv 0$ because $x(t)$ is an even function of $t$.

Note that $f(x(t))$ is a $2\pi \ii$-periodic function in $t$. It has
singularities at $\ii (\pi/2 + k \pi)$, $k\in \mathbb{Z}$, because it contains 
$\cosh(t)$. Also it has singularities at $(\pm s_0 + 2 k \pi) \ii$, $k \in
\mathbb{Z}$, where $s_0=\arccos(\sqrt{2}/d) \in (0,\pi/2)$, due to the
denominator. We just note that the singularity $\ii s_0 $ is closer to the real
axis of time than $\ii \pi/2$.

Let us proceed in two different ways to evaluate $I_c$.

{\bf Evaluation by residues.} One has
$$
I_c= \int_{-\infty}^{\infty} \frac{x^2(t)}{d-x(t)} e^{ \ii \omega t} dt.
$$
We integrate along the boundary of the rectangle of vertices $-R$, $R$, $R+2 \pi \ii$, $-R+2 \pi
\ii$, with $R>0$ and then we take the limit $R \rightarrow \infty$. Inside there
are four singularities of the integrand $F(t)=e^{\ii \omega t} x^2/(d-x)$: $\ii
s_0$, $\ii \pi/2 $, $\ii 3 \pi /2$ and $\ii ( 2 \pi - s_0)$. One gets
$$
I_c = 2\pi \ii \frac{1}{1- e^{-2 \pi \omega}} \sum \text{Res}(F,*),
$$
where $\sum \text{Res}(F,*)$ denotes the sum of the residues at the four singularities. The residues are
\begin{align*}
\text{Res}(F,\ii s_0) & = -\ii \frac{\sqrt{2} d}{\sqrt{d^2-2}} e^{-\omega s_0}, 
&\text{Res}(F,\ii (\pi- s_0)) &  = \ii \frac{\sqrt{2} d}{\sqrt{d^2-2}} e^{\omega (s_0 -2 \pi)},\\
\text{Res}(F,\ii \pi/2) & = \ii \sqrt{2} e^{-\omega \pi/2}, 
& \text{Res}(F,\ii 3 \pi/2)  & = -\ii \sqrt{2} e^{-3 \omega \pi /2 }.
\end{align*}
Note that under a fast frequency perturbation, for example if $\omega \sim
\nu^{-1}$ with $|\nu|$ small, the integral  $I_c$ is $\mathcal{O}(e^{-\omega
s_0})$ asymptotically when $\nu \rightarrow 0$.

{\bf Evaluation by series expansion of $f(x)$.} 
We expand $$f(x) = \frac{x^2}{d-x} = \frac{x^2}{d} \sum_{j \geq 0} \left(
\frac{x}{d} \right)^j,$$ and substitute this expansion into the
Poincar\'e-Melnikov integral. The previous series converges uniformly and the
series of the integrals also is convergent. One gets
\[ I_c=\sum_{j\geq 0}\frac{1}{d^{j+1}}\int_{-\infty}^{\infty}x^{j+2}(t)
\cos(\omega t)dt=\sum_{j\geq 0}\frac{\sqrt{2}^{j+2}}{d^{j+1}} I_{j+2}=
d\sum_{k\geq 2}\left(\frac{\sqrt{2}}{d}\right)^k I_k, \]
where
\[ I_k=\int_{-\infty}^{\infty} \frac{ \cos(\omega t) }{ \cosh^k(t)} dt. \]
Note that with the notation used in (\ref{I1I2}) one has $I_n=I_1(\omega,1,n)$.
Then the following recurrence holds for the integrals $I_n$
\[ I_1=\frac{\pi}{\cosh(\omega\pi/2)},\qquad I_2=\frac{\omega \pi}
{\sinh(\omega\pi/2)},\qquad I_n=\frac{\omega^2+(n-2)^2}{(n-1)(n-2)}I_{n-2},
\ n\geq 3. \]
Each $I_n$ is related to the monomial $x^{n}$ of the series of $f$.

Let us provide an idea of the behaviour of the series for $\omega$ large. A lower
bound can be obtained by neglecting the $(n-2)^2$ terms (compare to $\omega^2$) in the numerator 
of the recurrence of $I_n$. Letting aside constants, one can write
$I_1=\exp(-\omega\pi/2),\, I_2=\omega\exp(-\omega\pi/2)$ and then one has $I_n=
\omega^{n-1}\exp(-\omega\pi/2)$. The sum in the previous expression of $I_c$
becomes (for simplicity we consider the sum starting at $k=0$, which adds
relatively small contributions)
\[ \omega^{-1}\sum_{k\geq 0} \left(\frac{\sqrt{2}\omega}{d}\right)^k
\exp(-\omega\pi/2)=\omega^{-1} \exp(-\omega(\pi/2-\sqrt{2}/d)). \]
As mentioned, this is a lower bound of the sum. If we compare the exponential
part with the one obtained by residues, the relevant multiples of $-\omega$ in
the exponents are $\pi/2-\sqrt{2}/d$ and $s_0=\arccos(\sqrt{2}/d)$. They are quite
close for large $d$, but differ in an important way when $d$ tends to
$\sqrt{2}$.

To see the contribution of the largest term in the sum, let us look at a general
case: given a large value $z$, how the largest term in the series for $\exp(z)$
compares with the sum. Obviously the largest term is the term $n=[z]$, where
$[\phantom{+}]$ denotes the integer part. For simplicity we use $n=z$ and the
relative contribution is $(z^z/\Gamma(z+1))/\exp(z)\approx 1/\sqrt{2\pi z}$,
using Stirling formula.

Summarizing, the largest term in the series can give some idea of the total
contribution to the splitting, but only for large values of $d$.

%$$
%I_c= \sum_{j \geq 0} \frac{1}{d^{j+1}} \int_{-\infty}^{\infty} x^{j+2} \cos(\omega t) dt = \sum_{j \geq 0} \frac{\sqrt{2}^{j+2}}{d^{j+1}} I_{j+2},
%$$
%where
%$$
%I_{j+2}= \int_{-\infty}^{\infty} \frac{ \cos(\omega t) }{ \cosh^{j+2}(t)} dt.
%$$
%Note that with the notation used in (\ref{I1I2}) one has $I_{j+2}=I_1(\omega,1,j+2)$. Then the following recurrence holds for the integral $I_n$
%$$
%I_1=\frac{\pi}{ \cosh(\omega \pi/2)}, \qquad I_2 = \frac{\omega \pi}{\sinh(\omega \pi/2)}, \qquad I_n=\frac{\omega^2 + (n-2)^2}{(n-1)(n-2)} I_{n-2}, \ n\geq 3.
%$$
%Each of the $I_n$ is related to the monomial $x^{n}$ of the series. 
%
%Concerning the asymptotic behaviour of the integral $I_c$ when
%$\omega$ becomes a fast frequency, we see that  $I_n=\mathcal{O}(\exp(-\omega
%\pi/2))$.  However, adding all the terms to obtain $I_c$, one has
%$I_c=\mathcal{O}(\exp(-\omega s_0))$ as was obtained using residues.

\begin{remark}
This appendix seems to contradict the ideas in this work since, for the system
(\ref{system_intro}) considered, we claim that the dominant term of the series
expansion provides the correct order (that is the correct exponent in the exponentially small part) of the splitting function. The explanation
is a quasi-periodic effect: the contribution of the small divisors related to
the two frequencies to the terms related to the dominant
harmonics of the splitting function is of larger order than the contribution of
the other terms of the expansion. Let us give further details. The dynamics
along the homoclinic orbit of system (\ref{system_intro}) is slow. Scaling
$T=\nu t$ it becomes of order 1, as it is for the example of this appendix. The
singularities closest to the real axis are $\pm \ii \! \arccos(\sqrt{2}/d)$ and
$\pm \ii \! \log(c+\sqrt{c^2-1})$.  The angles playing a role are
$\psi=T/\nu+\psi_0$ and $\theta=\gamma T/\nu+\theta_0$, hence fast angles.  For
$c=5$ and $d=7$, the closest singularities are located at $\pm  \ii \sigma_d$,
$\sigma_d \approx 1.367365$ and at $\pm \ii \sigma_c$, $\sigma_c \approx
2.292431$. Hence under a periodic perturbation one would expect an
exponentially small splitting in $-\sigma_d/\nu$.  However, under a
quasi-periodic perturbation, the dominant harmonic has a combination of the
fast angles $\psi_0$, $\theta_0$. This combination defines an angle with a
frequency of order $1/\sqrt{\nu}$ if $\gamma$ is a quadratic irrational, hence
a fast angle but slower than the angles $\psi_0$ and $\theta_0$. For $\gamma$
with other Diophantine properties the frequency of the fast angle of the
dominant harmonic changes but the situation is analogous, see
Remarks~\ref{remark_k1k2} and \ref{rk7p4}.
\end{remark}

\section{A comment on the regularity of the non-autonomous perturbation} \label{Sec:Regularity}

Here we consider the unperturbed Hamiltonian (\ref{H0}). A general perturbation
$H_1(\bx,\by,\theta)$, $\theta=\gamma t + \theta_0$, $\gamma \in
\mathbb{R}\setminus \mathbb{Q}$, $\theta_0 \in [0,2\pi)$, analytic w.r.t.
$\bx,\by$ in a compact set $K \subset \mathbb{C}^2$ and periodic in $\theta$,
will give rise to two sequences of changes in the dominant harmonic of
$\DFt{1}$ and $\DFt{2}$.  Here, we show that if the perturbation
$H_1(\bx,\by,\theta)$ is of class $\mathcal{C}^{p}$ in $\theta$, for some $1
\leq p<\infty$, then the amplitude of the dominant term of the Melnikov
function $M(\psi_0,\theta_0)$ that measures the splitting between the invariant
manifolds $W^{u/s}({\bf 0})$ is expected to remain constant as $\nu \rightarrow
0$. On the other hand, if $H_1({\bf x},{\bf y},\theta) \in
\mathcal{C}^{\omega}$ then, generically, it is expected to have a decay of the
amplitude of the dominant term of $M(\psi_0,\theta_0)$ as $\nu \rightarrow 0$.
The reason is the following. Consider the Fourier expansion
$$
H_1({\bf x},{\bf y},\theta)=\sum_{k \in \mathbb{Z}} A_k({\bf x},{\bf y}) e^{i k \theta}.
$$
We recall that $H_0=G_1 + \nu G_2$, and that $G_1$ evaluated along the unperturbed separatrix contains a factor periodic in $t$.
On the other hand, $H_1$ is assumed to be periodic in $\theta$. As explained in
Section~\ref{sect_expr_PoincMel}, in the exponential part of the Melnikov
function there appear terms with 
$$
s = j - k \gamma, \qquad j,k \in \mathbb{Z}.
$$
Assume that $\gamma$ is a Diophantine number and that exist constants
$\tilde{c} >0 $ and $\tau \geq 1$ such that
$$
|s|>\frac{\tilde{c}}{|k|^{\tau}}.
$$
The term whose exponent has the smallest $|s|$ becomes the dominant term of $M(\psi_0,\theta_0)$.
This term is related to a best approximant of $\gamma$.
For the best approximants of $\gamma$ one has $|s| = c |k|^{-\tau}$ for
suitable $c>0$ (related to the values $c_{s,m_1/m_2}$ introduced in
Section~\ref{sect_PoincMel}). The exponentially small part of the dominant term
of $M(\psi_0,\theta_0)$ is expected to be of the form 
$$
\exp\left(-\frac{c}{\nu |k|^{\tau}}\right).
$$
We include the effect of the prefactor in front of this exponential part so that
we can get more accurate information on the behaviour of the amplitude of
$M(\psi_0,\theta_0)$. We distinguish two cases according to the regularity of
$H_1$ w.r.t $\theta$.

\begin{enumerate}

\item If $H_1({\bf x},{\bf y},\theta)$ is of class $\mathcal{C}^{p}$ in
$\theta$ then the Fourier coefficients decay as
$$
|A_k({\bf x,\bf y})| = \mathcal{O}(|k|^{-p}).
$$
The dominant term $d(k)$ of $M(\psi_0,\theta_0)$, considering that $\{G_1,H_1\}$ has a similar expression to $H_1$,
is then expected to be of the order
$$
d(k)\approx K \frac{1}{|k|^p} \exp\left(-\frac{c}{\nu |k|^{\tau}}\right),
$$
for some $k\in \mathbb{Z}$, where $K \in \mathbb{R}_+$ is a constant.
The maximum of $d(k)$ is attained for $k=k_M=(c \tau / \nu p)^{1/\tau}$ which implies that
$$
d(k_M) \approx C \nu^{p/\tau}, \qquad \text{ where } \quad C= k(p/c e \tau)^{p/\tau}.
$$
As a conclusion, the dominant term behaves as a power law with respect to $\nu$ and
the exponential part of the amplitude remains constant.

\item If, on the other hand, $H_1({\bf x},{\bf y},\theta)$ is of class
$\mathcal{C}^{\omega}$ in $\theta$ then the Fourier coefficients decay as
$$
|A_k({\bf x},{\bf y})| = \mathcal{O}(e^{-|k|/\rho}),
$$
where $\rho>0$ is the distance to the closest singularity of $H_1({\bf x},{\bf
y},\theta)$ from the real time axis. Assuming again that $\{G_1,H_1\}$ has a
similar expression to $H_1$, the dominant term of $M(\psi_0,\theta_0)$ is
expected to be of the order
$$
d(k) \approx K \exp\left(-\frac{|k|}{\rho}\right) \exp\left(-\frac{c}{\nu |k|^{\tau}}\right).
$$
It follows that the maximum of $d(k)$ is attained for $k=k_M=(c \tau \rho/ \nu
)^{1/(\tau+1)}$ which gives 
$$
d(k_M) \approx K \exp\left(-C/\nu^{1/(\tau+1)} \right), \quad \text{ where } \quad C=
\frac{\tilde{k}^{1 /(\tau +1)}}{\rho} + \frac{c}{\tilde{k}^{\tau /(\tau+1)}}, \quad \tilde{k}=c \tau \rho.
$$
As a conclusion the dominant term amplitude decreases exponentially. See related 
comments in Remark~\ref{remark_k1k2} and Remark~\ref{rk7p4}.
\end{enumerate}

\section{Splitting functions, splitting volume and diffusion properties} \label{split-difusion}

As shown in Table~\ref{taulanubif} for large ranges of $\nu$ the dominant
harmonics of both splitting functions coincide. This has some dynamical consequences.
Note that this situation can happen in many other systems, and also in
situations not necessarily related to the Hamiltonian-Hopf scenario.

In our framework the unperturbed system has a 2-dimensional homoclinic surface
given by the invariant manifolds of the origin that coincide ($W^u =W^s$ for
the unperturbed system) and which is given by $G_1=G_2=0$. The invariant
manifolds $W^u$ and $W^s$ split for $\nu>0$.  Let $\DFt{1}$ and $\DFt{2}$
the splitting functions, measuring the displacement of $W^u(0)$ and $W^s(0)$
with respect to the unperturbed manifolds. They can be expressed as a Fourier
series with combinations of two angles $\psi_0,\theta_0 \in [0,2\pi)$ of the form
$\sin(m_1 \psi_0 - m_2 \theta_0)$. For $\psi_0=\theta_0=0$ one has a homoclinic
point. We look for a basis of $\mathcal{T}_{W^u}(0,0)$ (resp.
$\mathcal{T}_{W^s}(0,0)$), the tangent space to $W^u$ (resp. to $W^s$) at the
homoclinic point, and we define the splitting volume at the homoclinic point to
be the determinant between the four vectors of the basis (suitably normalized
if necessary), see \cite{LocMarSau03}.

Assume that, in the fundamental torus $\mathcal{T}_\Sigma$ where the splitting
between $W^u$ and $W^s$ is measured, $W^u$ is represented as a graph $g_u:
\mathbb{R}^2 \rightarrow \mathbb{R}^4$, where
$g_u(\psi_0,\theta_0)=(\psi_0,\theta_0, F_1^u(\psi_0,\theta_0),
F_2^u(\psi_0,\theta_0))$.  Similarly, we consider
$g_s(\psi_0,\theta_0)=(\psi_0,\theta_0,F_1^s(\psi_0,\theta_0),F_2^s(\psi_0,\theta_0))$
the graph representation of $W^s$ in the fundamental domain. One has 
$$
\mathcal{T}_{W^u}(0,0) = \langle v_1, v_2 \rangle, \qquad
\mathcal{T}_{W^s}(0,0) = \langle w_1, w_2 \rangle,
$$
where
\begin{align*}
v_1\!=\!\frac{\partial g_u}{\partial \psi_0}(0,0)\!=\!\left. \left(1,0,\frac{\partial F_1^u}{\partial \psi_0},  \frac{\partial  F_2^u}{\partial \psi_0} \right) \right|_{(0,0)}^\top, & 
\ v_2\!=\!\frac{\partial g_u}{\partial \theta_0}(0,0) \!=\! \left. \left(0,1,\frac{\partial F_1^u}{\partial \theta_0}, \frac{\partial F_2^u}{\partial \theta_0} \right)\right|_{(0,0)}^\top,\\
w_1 \!= \!\frac{\partial g_s}{\partial \psi_0}(0,0)\!=\!\left. \left(1,0,\frac{\partial  F_1^s}{\partial \psi_0}, \frac{\partial  F_2^s}{\partial \psi_0} \right) \right|_{(0,0)}^\top, &
\ w_2 \!=\!\frac{\partial g_s}{\partial \theta_0}(0,0)\!=\!\left. \left(0,1,\frac{\partial F_1^s}{\partial \theta_0}, \frac{\partial F_2^s}{\partial \theta_0} \right) \right|_{(0,0)}^\top.
\end{align*}
The splitting volume at the homoclinic point at $(\psi_0,\theta_0)=(0,0)$ is defined as
$$
V=\det(v_1,v_2,w_1,w_2),
$$
and one has
$$
V=a_1 b_2-b_1 a_2,
$$
where
$$
a_i=  \frac{\partial{F_i^u}}{\partial \psi_0}(0,0) - \frac{\partial{F_i^s}}{\partial \psi_0}(0,0), \quad b_i=  \frac{\partial{F_i^u}}{\partial \theta_0}(0,0)-\frac{\partial{F_i^s}}{\partial \theta_0}(0,0), \quad i=1,2.\\ 
$$

The volume $V$ is a quantity related to local diffusive properties: if $V>0$ then
the system generically shows some diffusion. If $V=0$ there is no
possibility of having ``first order'' diffusion (nothing prevents on having a
much slower diffusion process if the manifolds have non-transversal
intersection or intersect transversally at some other homoclinic). 

Note that if $\DFt{1}(\psi_0,\theta_0) = c \DFt{2}(\psi_0,\theta_0)$ for some $c \in \mathbb{R} \setminus
\{0\}$, $(\psi_0,\theta_0)\in \mathcal{T}_\Sigma$, then $a_1=c a_2$ and $b_1=c b_2$, and $V=0$.
For example, as an illustration, if one has 
$$
\DFt{1}= A \sin(m_1 \psi_0 - m_2 \theta_0), \text{ and } \DFt{2}= B \sin(m_1 \psi_0 - m_2 \theta_0),
$$
for $m_1,m_2$  related to an approximant of $\gamma= (\sqrt{5}-1)/2$ then the
splitting volume is 0, and no ``first order'' diffusion is expected.
The behaviour of $V$ as a function of $\nu$ is displayed in Fig.~\ref{volum}.
We observe on the right plot that for $\nu=\nu_1$, where $\log_2{\nu_1}\approx -8.391$,
it seems $V=0$. The same happens for $\nu=\nu_3$, $\log_2{\nu_3} \approx -11.202$.
In fact, for $\nu_1$ and $\nu_3$ there is a change of sign of the determinant $V$. 
These values of $\nu$ are close to local maxima of the values of $\DFt{i}$,
see Figs.~\ref{nterms} and \ref{ampli2}. 
For values of $\nu$ in an interval around $\nu_1$ (respectively, around
$\nu_3$) the dominant harmonic for both $\DFt{1}$ and $\DFt{2}$
corresponds to $(m_1,m_2)=(5,8)$ (respectively, to $(m_1,m_2)=(13,21)$).  The
nearby harmonics are much smaller and, hence, the splitting functions $\DFt{i}$
are close to be proportional. Between $\nu_1$ and $\nu_3$ there is a
range of $\nu$ values for which the dominant harmonic is $(m_1,m_2)=(8,13)$.
This harmonic has the maximum contribution to $\DFt{i}$ for $\nu=\nu_2$,
$\log_2(\nu_2)\approx -9.85$, where $V$ has a minimum, but the signs of the
dominant harmonic and the nearby ones play a role and $V\neq 0$.  We postpone
further investigations on the relation between the behaviour of $V$ and the
relative position of the invariant manifolds in different ranges of the small
parameter for future works.

\begin{figure}[ht]
\begin{center}
\epsfig{file=./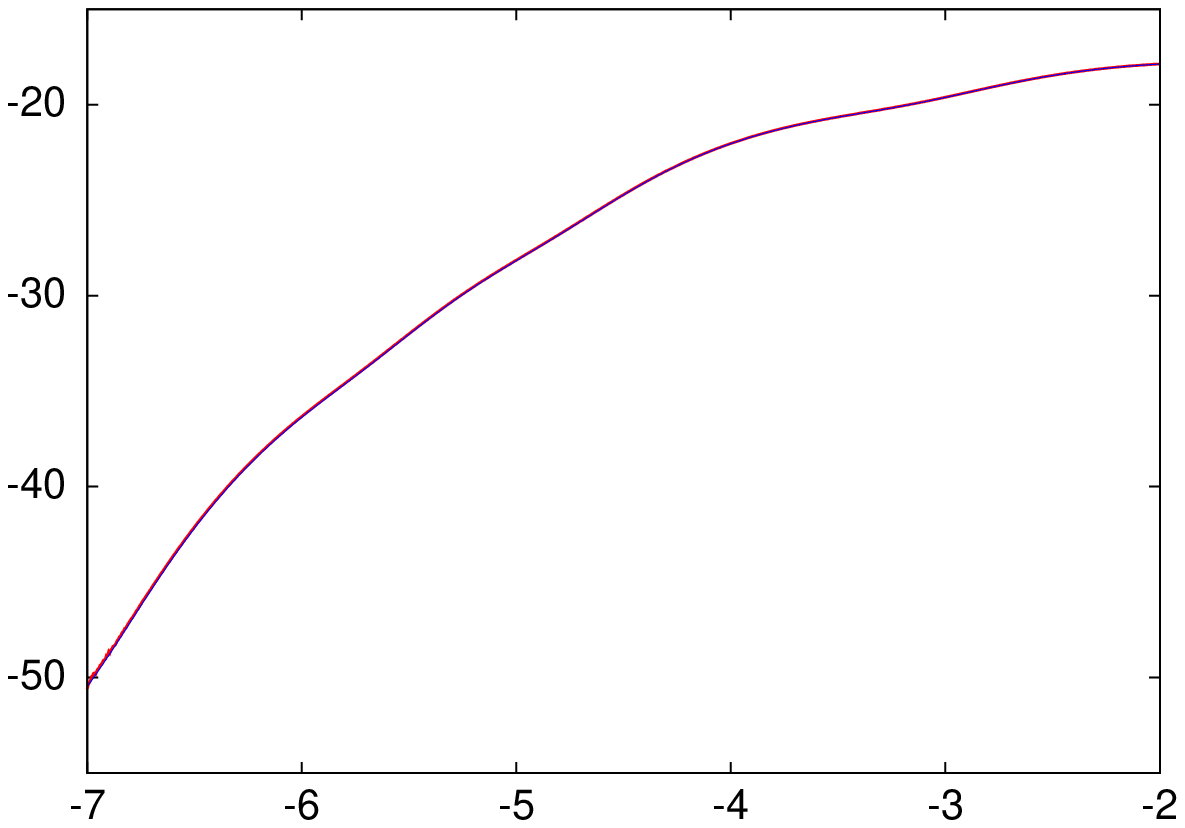,width=7cm} 
\epsfig{file=./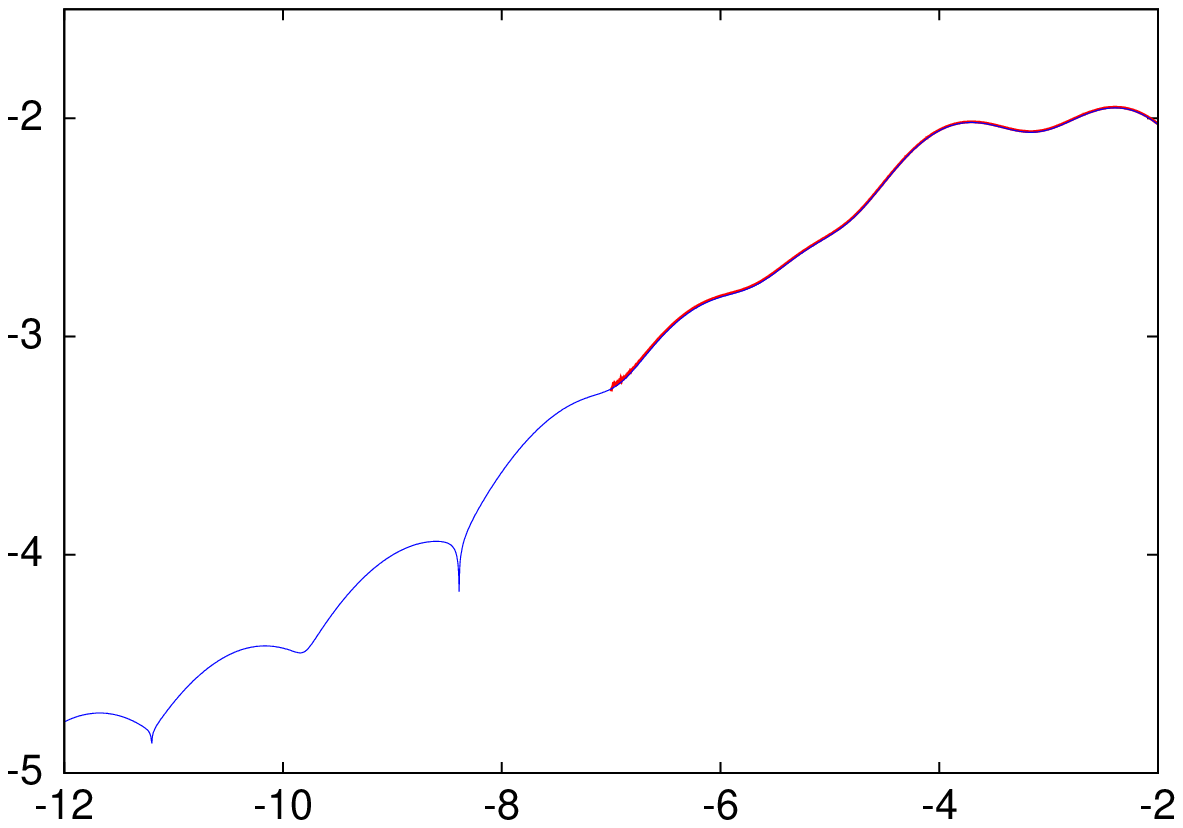,width=7cm}
\end{center}
\caption{We consider $\epsilon=10^{-3}$ and $\gamma=(\sqrt{5}-1)/2$. Left: We display $\log(|V|)$ as a
function of $\log_2(\nu)$. The red points display $V$ computed from a direct computation
via a numerical computation of the invariant manifolds. The blue
points is an estimate of $V$ based on the dominant terms of the Poincar\'e-Melnikov approximation.
The blue points almost coincide with the red ones and, hence, red points are almost hidden.
Right: We plot $\log(|V|/\epsilon^2)\sqrt{\nu}$ for $\nu<2^{-7}$, where $V$
is estimated from  the dominant terms Poincar\'e-Melnikov approximation.}
\label{volum}
\end{figure}

\section{On the visible and hidden harmonics of the splitting function related to best approximants}\label{conseq_best}

In Section~\ref{Sec:Otherfreq} we have shown examples where the approximant
associated to one of the best approximants of $\gamma$ is hidden. By hidden harmonic
we refer to an harmonic related to a best approximant which never becomes
dominant: for all values of $\nu$ there is (are) other harmonic(s) which give a
larger contribution to the splitting function. Nevertheless we do not have
found examples where two (or several) successive harmonics related to best
approximants of $\gamma$ are hidden. 

In this appendix we explain why some best approximants give a smaller
contribution to the splitting, but they are dominant in some narrow range (see
Fig.~\ref{em2} for the case $\gamma=e-2$ and, similarly, the top right plot
in Fig.~\ref{othergamma} for other $\gamma$) while others are not dominant (as seen in the bottom
left plot in Fig.~\ref{othergamma}). We emphasize that this is not related to
the concrete Diophantine properties of $\gamma$ but to the different behaviour
of the error of the approximations which, as explained in
Section~\ref{Sec:periodicity_csn}, is due to the successive quotients in
the CFE of $\gamma$.

Of course, it is easy to produce examples in which several successive best
approximants are not dominant. Simply consider that the harmonics related to
these approximants have an extremely small amplitude.  Here we will consider
the case when the amplitudes decrease uniformly in an exponential way and we
will show that it is not possible to have two consecutive non-dominant best
approximants if some nearby quotients are large. See \cite{FonSimVie-2} for more general cases. 

Assume we have a perturbation depending on a spatial component $x$ and on time and that
the variable $x$ rotates with constant angular velocity, say 1, while the
temporal part has frequency $\gamma$. We also assume that the amplitudes of the
harmonics, due to $x$ and to $t$ decrease exactly in an exponential way.
Let $N_n/D_n$ be a best approximant to $\gamma$ and assume, similarly
to what was presented in Remark~\ref{remark_k1k2} in
Section~\ref{sect_expr_PoincMel}, that the contribution to the
Poincar\'e-Melnikov integral is of the form
\begin{equation*} 
\epsilon A\nu^B\exp(-N_n\rho_1-D_n\rho_2)\exp\left(\frac{-C s}{\nu}\right),
\end{equation*}
where $s=|N_n-\gamma D_n|$ and that in the problem at hand $C=\pi/2$. As we are
interested in what happens for fixed $\epsilon$ and a given value of $\nu$ the
factor $\epsilon A\nu^B$ is irrelevant. As shown in
Section~\ref{Sec:periodicity_csn}, $s$ can be expressed as $(N_n c_{s,n})^{-1}$,
where $c_{s,n}$ is given in (\ref{csnqpm}).

To find the maximal contribution is equivalent to find the minimum of minus the
exponent. Furthermore one has $D_n=(N_n\mp s)/\gamma$ where the sign $\mp$
depends on the sign of $s$. Hence
\begin{equation*} 
N_n\rho_1+D_n\rho_2=N_n(\rho_1+\rho_2/\gamma)(1+\cO(N_n^{-2}))
\end{equation*}
and we also recall that
\begin{equation*} 
c_{s,n}=((q_{+,n}+1/q_{-,n})/\gamma)(1+\cO(N_n^{-2})),
\end{equation*}
where $q_{+,n}$ and $q_{-,n}$ are expressed in terms of quotients of the CFE.
Assuming we are dealing with large values of $N_n$ the relative contributions
of order $N_n^{-2}$ will be neglected for simplicity. Furthermore one can
introduce $\rho=\rho_1+\rho_2/\gamma$ to have the following simpler expression
for minus the exponent
\begin{equation*} 
\rho N_n+\frac {C \gamma}{\nu N_n (q_{+,n}+1/q_{-,n})}.
\end{equation*}
Dividing by $\rho$, scaling $\nu$ as $\hat{\nu}=\nu \rho/(C \gamma)$ and denoting
$q_{+,n}+1/q_{-,n}$ by $\hat{c}_{s,n}$, the functions to be studied are of the
form
\begin{equation*} 
T_n(\hat{\nu})= N_n+\frac{1}{\hat{\nu}N_n\hat{c}_{s,n}}.
\end{equation*}

In what follows we rename $\hat{\nu}$ and $\hat{c}_{s,n}$ as $\nu$ and
$c_{s,n}$, for simplicity. Furthermore, looking at Figs.~\ref{ampli2},
\ref{em2} and \ref{othergamma}, we shall concentrate, as we said, on values of
$N_n$ which are large (i.e., $\nu$ small) to see, under the current
assumptions, which are the best approximants that are not giving the dominant
contribution in the splitting. We also note that if $N_n<N_{n+1}$ then
$T_n(\nu)<T_{n+1}(\nu)$ for $\nu$ large and $T_n(\nu)> T_{n+1}(\nu)$ for $\nu$
small. This requires to check that the sequence $\{1/N_nc_{s,n}\}_n$ is
decreasing. It is an easy check and we refer to \cite{FonSimVie-2} for the
details. Hence, $T_n$ and $T_{n+1}$ coincide only at one point. This is a transversality property.

Assume $\gamma$ is given. We fix our attention to the numerators $N_{n-1},N_n$
and to the quotients $q_{n+j}\;,j=1,5$. To simplify the notation we shift the
subindices by $n$. For completeness we also take into account the value
$\alpha_+=1/[q_6;q_7,\ldots]$. We assume that $q_1,q_3,q_4$ are relatively
small and that $q_2,q_5$ very large, say of the order of a large number $Q$,
which hereafter will be considered as a parameter. Looking only at the dominant
contributions (i.e., neglecting terms of relative size of $\cO(Q^{-1})$) we have
the numerators
\begin{equation*} 
N_1,\quad N_2=q_2N_1,\quad N_3=q_3q_2N_1,\quad N_4=q_2N_1(1+q_4q_3).
\end{equation*}
The relevant values of $c_{s,j}$ are
\begin{equation*} 
c_{s,1}=q_2,\quad c_{s,2}=q_3+1/q_4,\quad c_{s,3}=q_4+1/q_3,\quad c_{s,4}=q_5.
\end{equation*}
From this we can compute the values of $T_j,j=1,4$, skipping terms relatively
$\cO(Q^{-1})$:
\begin{equation*} 
T_1(\nu)=N_1+\frac{1}{\nu N_1q_2},\quad T_2(\nu)=q_2N_1+\frac{q_4}
{\nu q_2N_1(1+q_3q_4)},
\quad T_3(\nu)=q_3q_2N_1+\frac{1}{\nu q_2N_1(1+q_3q_4)},
\end{equation*}
\[ T_4(\nu)=q_2N_1(1+q_4q_3)+\frac{1}{\nu q_2 N_1(1+q_3q_4)q_5}.\]

Now we look for a value of $\nu$, say $\nu^*$, for which one has $T_1=T_4$. One has $\nu^*=(N_1^2
q_2^2(1+q_3q_4))^{-1}$ which allows to compute the value $T_1(\nu^*)=T_4(\nu^*)$. But,
as done in the different figures, it is suitable to multiply $T_i$ by $\sqrt{\nu^*}$.
In this way one obtains plots similar to the ones in the figures, just changing
the sign. Let us denote $\sqrt{\nu^*}T_j(\nu^*)$ by $\hat{T}_j(\nu^*)$ and then 
\begin{equation*} 
\hat{T}_1(\nu^*)=\sqrt{1+q_3q_4}=\hat{T}_4(\nu^*).
\end{equation*}
At that value of $\nu^*$ there is a change:  $T_1<T_4$ for $\nu > \nu^*$ while
$T_1>T_4$ otherwise. But, what are $T_2$ and $T_3$ doing? A simple computation gives
\begin{equation*} 
\hat{T}_2(\nu^*)=\frac{1+q_4}{\sqrt{1+q_3q_4}},\quad
  \hat{T}_3(\nu^*)=\frac{1+q_3}{\sqrt{1+q_3q_4}}.
\end{equation*}
Therefore, if $q_3<q_4$ one has $\hat{T}_3(\nu^*)<\hat{T}_2(\nu^*)\leq
\hat{T}_1(\nu^*)$, and the last inequality becomes an equality if, and only if,
$q_3=1$. This, together with the transversality property mentioned above,
implies that the best approximant associated to $N_3$ is dominant.  In a
similar way, if $q_4<q_3$ one has $\hat{T}_2(\nu^*)<\hat{T}_3(\nu^*)\leq
\hat{T}_1(\nu^*)$ with equality between the last two if, and only if, $q_4=1$.

Summarizing, under the current conditions, if either $q_3$ or $q_4$ are
different from 1, then at least one of them is the dominant term in a range
between the dominance of the best approximant associated to $N_1$ and the one
associated to $N_4$.

It remains to investigate what happens if $q_3=q_4=1$. In that case the approximation
neglecting relative terms $\cO(Q^{-1})$ gives that $T_1(\nu),\;T_2(\nu),\;
T_3(\nu)$ and $T_4(\nu)$ intersect at the same point. Numerical evidence is
that  $T_2(\nu)$ or $T_3(\nu)$ or both of them dominate in some narrow range.
Which one dominates or in which cases both of them dominate in suitable ranges
depends, mainly, on the ratio $q_2/q_5$. Let us prove the theoretical facts
confirming this numerical evidence.

Hence we take $q_3=q_4=1$ and recompute $N_j$ and $c_{s,j},j=1,\ldots,4$, taking into
account terms whose relative value is $\cO(Q^{-1})$. We note that $N_2=q_2N_1+N_0=
N_1(q_2+N_0/N_1)$. For simplicity we denote $N_0/N_1$ by $\alpha$. One easily
obtains
\begin{equation*} 
N_2=N_1(q_2+\alpha),\quad N_3=N_1(q_2+\alpha+1),\quad N_4=N_1(2q_2+2\alpha+1).
\end{equation*}
In a similar way we obtain the following values for $c_{s,j}$, 
\begin{equation*} 
c_{s,1}=q_2+\frac{1}{2}+\alpha,\quad c_{s,2}=2-\frac{1}{q_5}+\frac{1}{q_2},
\quad c_{s,3}=2-\frac{1}{q_2}+\frac{1}{q_5},\quad c_{s,4}=q_5+\alpha_+ +
\frac{1}{2},
\end{equation*}
where we recall that $\alpha_+=1/[q_6;q_7,\ldots]$.

Let us denote by $\nu_{i,j}$ the value of $\nu$ for which $T_i=T_j$. One has
\begin{equation*}
\nu_{i,j}=\frac{(N_i c_{s,i})^{-1}-(N_j c_{s,j})^{-1}}{N_j-N_i}.
\end{equation*}
It is immediate to obtain expressions for $\nu_{i,j},\;1\leq i<j\leq 4$, but for
shortness we display the values obtained after shifting and scaling:
$\nu_{i,j}^*=(\nu_{i,j}N_1^2 q_2^2-1/2)q_2$. They are
\begin{equation*} 
\nu_{1,2}^*=\frac{1}{4}-\frac{q_2}{4q_5}-\alpha,\quad
   \nu_{1,3}^*=-\frac{1}{4}+\frac{q_2}{4q_5}-\alpha,\quad
   \nu_{1,4}^*=-\frac{1}{4}-\frac{q_2}{4q_5}-\alpha,
\end{equation*}
\[ \nu_{2,3}^*=\left(\frac{q_2}{2q_5}-\frac{1}{2}\right)q_2,\quad
   \nu_{2,4}^*=-\frac{3}{4}-\frac{q_2}{4q_5}-\alpha,\quad
   \nu_{3,4}^*=-\frac{1}{4}-\frac{3q_2}{4q_5}-\alpha.\]
We note that to have a more precise value for $\nu_{2,3}^*$ it would be
necessary to carry out some expansion with relative order $\cO(Q^{-2})$,
specially if $q_2$ and $q_5$ are close. But it is not necessary for our
purposes.

From the previous expressions one has
\begin{equation*} 
\max\{\nu_{2,4}^*,\nu_{3,4}^*\}<\nu_{1,4}^*<\min\{\nu_{1,2}^*,\nu_{1,3}^*\},
\end{equation*}
showing that when $\nu$ decreases, either $T_2(\nu)$ or $T_3(\nu)$ intersect
$T_1(\nu)$ before the intersection of $T_1(\nu)$ and $T_4(\nu)$ and, on the
other side, either $T_2(\nu)$ or $T_3(\nu)$ intersect $T_4(\nu)$ after the
intersection of $T_1(\nu)$ and $T_4(\nu)$.

Hence, either the harmonic associated to $N_2$ or the one associated to $N_3$
(or both of them in some small ranges) dominate the splitting between the
dominances of the harmonics associated to $N_1$ and to $N_4$. The numerical
evidence in many examples is that both of them can be seen only if 
$|q_2-q_5| \leq 1$.

If $q_2<q_5$ but they are not so close, then $\nu_{1,2}^*>\nu_{1,3}^*$ and
$\nu_{2,4}^*<\nu_{3,4}^*$, showing that between the dominances related to $N_1$
and to $N_4$ there is a range of dominance of $N_2$. The roles of $N_2$ and
$N_3$ are exchanged if $q_5<q_2$ but they are not so close.

\begin{remark}
We have considered some transcendental frequencies like $\pi$ and
$\exp({\sqrt{5}-1}/2)$. From the $10^7$ first quotients of the CFE we observe
that it is reasonable to accept them as ``typical'' irrational numbers: the
geometric mean of the quotients of the CFE tends to the Khinchin constant (see,
e.g.,  \url{https://oeis.org/A002210} for numerical values) and the ratio of
increase of the denominators, measured as $\lim_{n\to\infty}\log(D_n)/n$ tends
to the L\'evy's constant $\pi^2/(12\log(2))$. In all the cases the hidden
harmonics that have been detected (around 27.9\% of the total number of
approximants) are isolated. Further details can be found in \cite{FonSimVie-2}.
\end{remark}

\section*{Acknowledgments}
This work has been supported by grants MTM2016-80117-P (Spain) and
2017-SGR-1374 (Catalonia). We also thank the MINECO grant MDM-2014-0445 (Spain).
We are specially indebted to A. Delshams, M.  Gonchenko and V. Gelfreich for
several discussions on related topics. We also thank J. Timoneda for
maintaining the computing facilities of the Dynamical Systems Group of the
Universitat de Barcelona, that have been largely used in this work.

\addcontentsline{toc}{section}{References}
\bibliographystyle{plain}
\bibliography{fsvcs}
\end{document}